\numberwithin{equation}{section}
\newcommand{\ds}{\displaystyle}
\newcommand{\R}{{\mathbb{R}}}
\newcommand{\N}{{\mathbb{N}}}
\newcommand{\dx}{\,dx}
\newcommand{\dt}{\,dt}
\newcommand{\dy}{\,dy}
\newcommand{\dz}{\,dz}
\newcommand{\dH}{\,d\mathcal{H}^{n-1}(y)}
\newcommand{\ie}{{; \it i.e., }}
\newcommand{\wto}{\rightharpoonup}
\newcommand{\E}{\mathcal{E}}
\newcommand{\F}{\mathcal{F}}
\newcommand{\MS}{\mathcal{MS}}
\let\e=\varepsilon
\let\r=\varrho
\let\d=\delta
\let\O=\Omega
\let\a=\alpha
\let\b=\beta
\let\G=\Gamma
\newtheorem{definition}{Definition}[section]
\newtheorem{theorem}[definition]{Theorem}
\newtheorem{proposition}[definition]{Proposition}
\theoremstyle{definition}
\newtheorem{remark}[definition]{Remark}
\begin{document}

\title[Second-order Ambrosio-Tortorelli functional]
{
  Second-order edge-penalization in \\the Ambrosio-Tortorelli functional}

\author[M. Burger, T. Esposito, and C.I. Zeppieri]{Martin Burger$^1$, Teresa Esposito$^1$, and Caterina Ida Zeppieri$^1$}
\address{$^1$Institut f\"ur Numerische und Angewandte Mathematik, Westf\"alische Wilhelms-Universit\"at M\"unster, Einsteinstr. 62, 48149 M\"unster, Germany.}
\email{martin.burger@wwu.de, teresa.esposito@uni-muenster.de, caterina.zeppieri@uni-muenster.de}



\begin{abstract}
We propose and study two variants of the Ambrosio-Tortorelli functional where the first-order penalization of the edge variable $v$ is replaced by a second-order term depending on the Hessian or on the Laplacian of $v$, respectively. 
We show that both the variants as above provide an elliptic approximation of the Mumford-Shah functional in the sense of $\Gamma$-convergence.

In particular the variant with the Laplacian penalization can be implemented without any difficulties compared to the standard Ambrosio-Tortorelli functional. The computational results indicate several advantages however. First of all, the diffuse approximation of the edge contours appears smoother and clearer for the minimizers of the second-order functional. { Moreover, the convergence of alternating minimization algorithms seems improved for the new functional.} We also illustrate the findings with several computational results.

\vspace{3pt}\noindent {\bf Keywords}:
Ambrosio-Tortorelli approximation, free-discontinuity problems, $\G$-convergence, variational image segmentation.

\vspace{3pt} \noindent {\it 2000 Mathematics Subject
Classification:} 49J45, 74G65, 68U10, 49M99.
\end{abstract}

\maketitle

%



\vspace*{-12pt}

\section{Introduction}

\noindent The minimization of the Mumford-Shah functional \cite{MumfordShah} is one of the most prominent and successful approaches in mathematical image processing, which has received considerable attention from a practical (see e.g. \cite{bar,cheng,tsai}), computational (see e.g. \cite{gobbino,hintermueller,tsai}), as well as theoretical point of view (see e.g. \cite{kawohl,MorelSolimini} and references therein). This approach led moreover to several fruitful variants of the original model such as the Chan-Vese functional \cite{ChanVese} or region-based variants with realistic noise modelling \cite{sawatzky}. 

According to the Mumford-Shah approach and to its later development proposed by De Giorgi and Ambrosio \cite{DGA}, 
the relevant contours of the objects in a picture are interpreted as the discontinuity set $S_{\bar u}$ of a function $\bar u$ approximating a given image datum and minimizing the functional
\begin{equation}\label{I:MS}
MS(u)=\a\int_\O |\nabla u|^2\dx +\b\,\mathcal{H}^{n-1}(S_u), 
\end{equation}
among all functions $u$ belonging to the space of special functions of bounded variation $SBV(\O)$. (The constants $\a,\b>0$ in \eqref{I:MS} are tuning parameters.)
The minimization of \eqref{I:MS} leads to a so-called free-discontinuity problem which is notoriously difficult to be solved numerically in a robust and efficient way.  An alternative approach frequently used to bypass these difficulties is to relax the minimization problem as above replacing the Mumford-Shah functional by a sequence of elliptic functionals which approximate $MS$ in a suitable variational sense. 
A first approximation of \eqref{I:MS} was studied by Ambrosio and Tortorelli \cite{AT90, AT92} who considered a sequence of functionals reminiscent of the Cahn-Hilliard approximation of the perimeter. More precisely, in \cite{AT92} the authors introduced the family of elliptic functionals
\begin{equation} \label{ATfunct}
AT_\e(u,v)=\a \int_\O (v^2+\eta_\e)|\nabla u|^2\dx +\frac{\b}{2}\int_\O\left(\frac{(v-1)^2}{\e}+\e|\nabla v|^2\right)\dx, 
\end{equation}
with $\e>0$ and $0<\eta_\e\ll \e$, defined on pair of functions $u,v \in W^{1,2}(\O)$, $v\nabla u\in L^{2}(\O;\R^n)$, where the additional variable $v$ encodes an approximation of the discontinuity set of $\bar u$. Indeed, loosely speaking, one expects that a minimizing $v_\e$ is close to $1$ in large regions of $\O$ and it deviates from $1$, being close to $0$, only in an $\e$-neighbourhood of $S_{\bar u}$ (where $\nabla u_\e$ tends to be very large). In this way we get that $v_\e$ approaches $1-\chi_{S_{\bar u}}$ as $\e\to 0$. This heuristic argument is made rigorous in \cite{AT90,AT92} using the language of $\Gamma$-convergence and following earlier ideas developed by Modica and Mortola in the seminal papers \cite{M,MM}.

Clearly the above approximation of the Mumford-Shah functional is not the only possible one and in particular different variants of \eqref{ATfunct} can be considered in order to enhance the Ambrosio-Tortorelli approximation scheme. In this perspective, a computational and practical improvement to the existing scheme would be desirable in some cases, e.g. when it is difficult to compute global minimizers of the original functional. These considerations also motivate the analysis carried out in the present paper. More precisely we are interested in replacing the term $\e|\nabla v|^2$ in \eqref{ATfunct} by a second-order term depending on the Hessian or on the Laplacian of $v$. These second-order penalizations are strongly related to some second-order Cahn-Hilliard-type functionals used to approximate the perimeter \cite{FMa, HPS} (see also the more recent \cite{CDMFL, CSZ}). 
  At a first glance a higher-order approximation seems counterintuitive, since one expects convergence to the first-order perimeter term in the limit. However, we shall observe in computational experiments that the second-order approximation may have several advantages. First of all the stronger smoothing behaviour of the second-order term leads to smoother approximations of $v$, which can lead to increased robustness in practice. In particular certain structures that are larger than typical noise but not yet of interest for the segmentation can be suppressed, e.g. freckles in the segmentation of the face in a portrait as illustrated in Figure \ref{lentigginifig}. Moreover an increase in robustness is visible in the computation as well, standard alternating minimization algorithms appear to converge faster for the second-order model and do not get stuck in undesired local minima, which is the case for the first-order model in a significant number of cases.	Another interesting aspect is that due to the missing maximum principle in higher order equations the optimal value of the variable $v$ is not bounded between zero and one anymore. In particular locations where $v$ is larger than one can certainly be identified as edges and due the specific shape of the optimal profile one can build two-sided approximations of the edge set in several cases.
	
	\begin{figure}[!!!h]
\includegraphics[width=0.2\textwidth,frame]{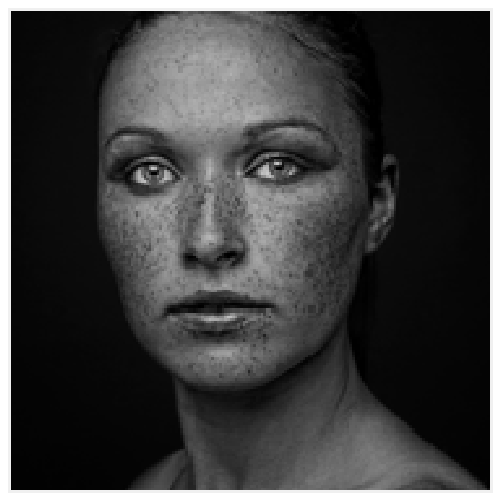}
\includegraphics[width=0.2\textwidth,frame]{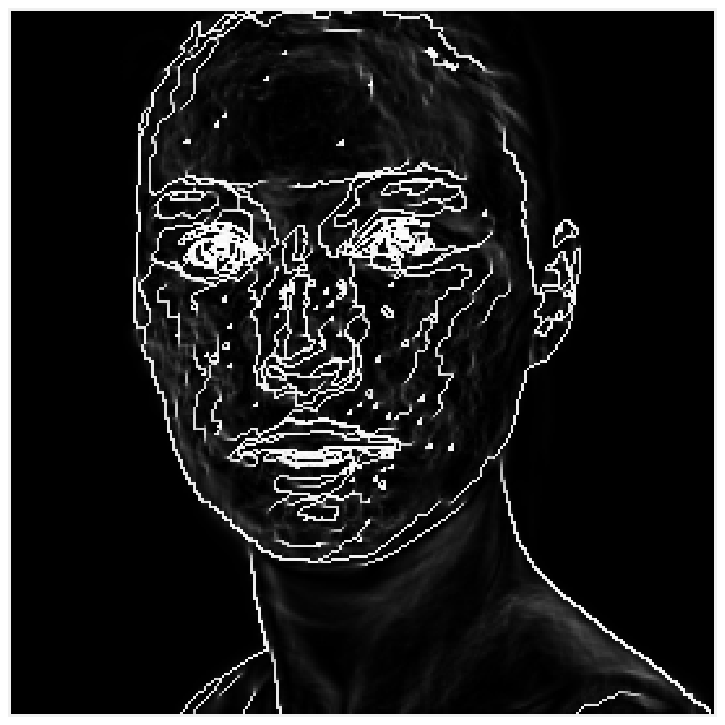}
\includegraphics[width=0.2\textwidth,frame]{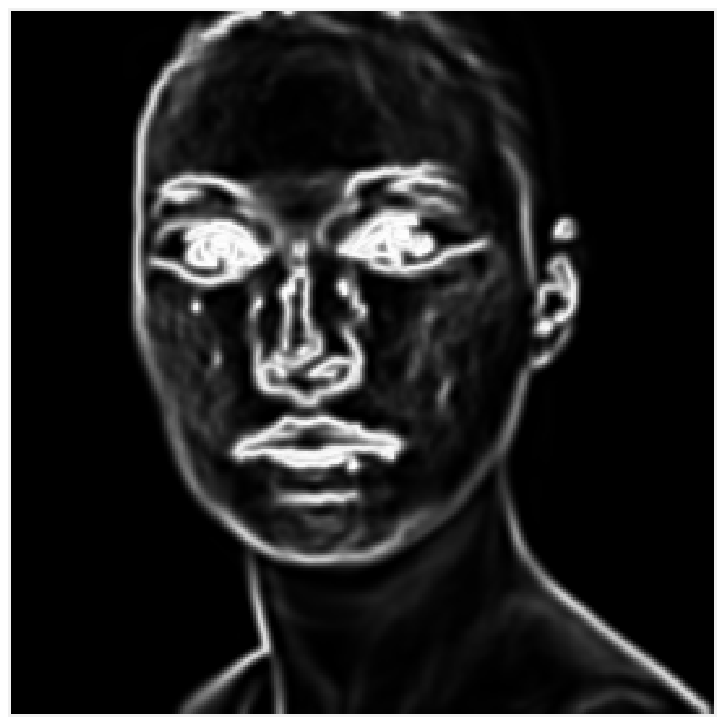}
\caption{Sisse image, courtesy of S\o ren Udby (from left to right): Image $g$, resulting $v$ in the Ambrosio-Tortorelli model, resulting $v$ in the second-order model. }
\label{lentigginifig}
\end{figure}

Then, assuming now that $v\in W^{2,2}(\O)$ (and dropping the ``lower-order terms'') we consider the functionals 
\begin{equation*}
F_\e(u,v)=\a\int_\O v^2|\nabla u|^2\dx +\frac{\b}{2\sqrt{2}}\int_\O\left(\frac{(v-1)^2}{\e}+{\e}^3|\nabla^2 v|^2 \right)\dx, 
\end{equation*}
as well as the functionals  
\begin{equation*}
E_\e(u,v)=\a\int_\O v^2|\nabla u|^2\dx +\frac{\b}{2\sqrt{2}}\int_\O\left(\frac{(v-1)^2}{\e}+{\e}^3|\Delta v|^2 \right)\dx,
\end{equation*}
for which we additionally assume $(v-1)\in W^{1,2}_0(\O)$. We show that when $\e\to 0$ both the families as above approximate $MS$ in the sense of $\Gamma$-convergence (see Theorem \ref{gconv-nD} and Theorem \ref{conv_lap} for the precise statements). 

In both cases the most delicate part in the proof is the lower bound inequality. The one-dimensional case is first considered and as for the Ambrosio-Tortorelli functional it turns out to contain the main features of the problem. In dimension one $F_\e$ and $E_\e$ clearly coincide (if we ignore the boundary condition, which in this case plays no role in the lower bound). When we estimate from below the energy contribution of a sequence $(u_\e,v_\e)$ with equi-bounded energy $F_\e$ (or $E_\e$), we first appeal to the Gagliardo-Nirenberg interpolation inequality to obtain the necessary {\it a priori} bound on the first derivative $v_\e'$. In its turn, this bound allows us to deduce that the limit $u$ is a piecewise Sobolev function, which thus has a finite number of discontinuities. Then the main difference with respect to the Ambrosio-Tortorelli analysis is that now the so-called ``Modica-Mortola trick'' cannot be applied. Therefore to find the minimal cost of a transition between $0$ and $1$, and occurring in an $\e$-layer around each discontinuity point of $u$, a careful analysis is needed (see Theorem \ref{t:min-pb} and Theorem \ref{gconv-1D}). Then, the upper bound inequality follows by an explicit construction which yields a recovery sequence $(u_\e,v_\e)$ satisfying  $(v_\e -1) \in W^{1,2}_0$.    

In dimension $n>1$ the two functionals $F_\e$ and $E_\e$ are in general different and the lower bound inequality is established in the two cases by means of two different arguments. Specifically, the form of the functionals $F_\e$ (and in particular the presence of the full Hessian) makes it possible to use a well-known integral-geometric argument, the so-called slicing procedure, which allows us to reduce the $n$-dimensional problem to the one-dimensional case and hence to conclude. 
On the other hand, when we deal with $E_\e$ we cannot exploit the one-dimensional reduction argument as above because of a symmetry breaking  due to the presence of the Laplacian. Therefore in this case a different procedure based on the blow-up method of Fonseca and M\"uller \cite{FM} is employed. In Theorem \ref{conv_lap} (see also Proposition \ref{lower_bd}) we show, however, that the symmetry breaking at $\e>0$ disappears in the limit. This is done again using Gagliardo-Nirenberg interpolation and then appealing to  standard elliptic regularity (where the boundary condition on $v$ plays a role). Then in both cases the upper bound inequality follows by a standard density argument and by explicit construction.
Finally, the $\G$-convergence results are complemented with the corresponding results about the convergence of the associated minimization problems (Theorem \ref{eqcoerc} and Theorem \ref{eqcoerc-1}).           

\section{Notation and preliminaries}

\noindent In this section we set a few notation and recall some preliminary results we employ in what follows.

Let $n\geq 1$; if not otherwise specified, throughout the paper $\O \subset \R^n$ denotes an open bounded set with Lipschitz boundary. 
The Lebesgue measure and the $k\hbox{-}$dimensional Hausdorff measure on $\R^n$ are denoted by $\mathcal{L}^n$ and $\mathcal{H}^k$, respectively. The scalar product of $x,y\in\mathbb{R}^n$ is denoted by $\langle x,y\rangle$ and the euclidean norm by $|x|$, whereas $A\cdot B$ denotes the product between two suitable matrices $A,B$. For each $\nu\in\mathbb{S}^{n-1}:=\{x\in\R^n\colon |x|=1\}$, $Q^\nu$ denotes the open unit cube centered at the origin with one face orthogonal to $\nu$; if $x_0\in\R^n$ and $\r>0$, then $Q_\r^\nu(x_0):=x_0+\r\,Q^\nu$. If $\nu$ belongs to the canonical basis of $\R^n$, we omit the dependence on $\nu$ and we simply write $Q_\r(x_0)=x_0+\r\,Q$, with $Q:=(-\frac12,\frac12)^n$.

Let $\mathcal{M}_b(\O)$ be the set of all bounded Radon measures on $\O$; if $\mu_k,\,\mu\in \mathcal{M}_b(\O)$, we say that $\mu_k\wto^*\mu$ weakly$^*$ in $\mathcal{M}_b(\O)$ if
\begin{equation*}
\int_\O\varphi\,d\mu_k \rightarrow\int_\O\varphi\,d\mu\qquad\text{ for every }\varphi\in C_0^0(\O).
\end{equation*}
Let $1\leq p \leq +\infty$ and $k\in \N$, we use standard notation for the Lebesgue and Sobolev spaces $L^p(\O)$ and $W^{k,p}(\O)$. 
For the general theory of special functions of bounded variation we refer the reader to the monograph \cite{AFP}; here we only recall some useful notation and definitions. For every $u\in SBV(\O)$, $\nabla u$ denotes the approximate gradient of $u$, $S_u$ the approximate discontinuity set of $u$, $\nu_u$ the generalized normal to $S_u$, and $u^+$ and $u^-$ are the traces of $u$ on $S_u$. We also consider the larger space of the generalized special functions of bounded variation on $\O$, $GSBV(\O)$, which is made of all the functions $u\in L^1(\O)$ whose truncation $u^M:=(u\land M)\vee(-M)$ belongs to $SBV(\O)$ for every $M\in\mathbb{N}$. We also consider the spaces
$$
SBV^2(\O) =\{u\in SBV(\O)\colon \nabla u\in L^2(\O) \text{ and } \mathcal{H}^{n-1}(S_u)<+\infty\}
$$
and
$$
GSBV^2(\O)=\{u\in GSBV(\O)\colon \nabla u\in L^2(\O) \text{ and } \mathcal{H}^{n-1}(S_u)<+\infty\}.
$$
We have 
$$
SBV^2(\O) \cap L^\infty(\O)=GSBV^2(\O) \cap L^\infty(\O).
$$
Since we heavily use it in what follows, we recall here the Gagliardo-Nirenberg interpolation inequality (see e.g. \cite[Theorems 4.14 and 4.15]{A}). 
\begin{proposition}\label{interpol-ineq} 
Let $U$ be a bounded open subset of $\R^n$ with Lipschitz boundary and let $\e_0>0$. Then there exists a constant $c_0(\e_0,U)>0$ such that
\begin{equation*}
c_0\,\e\int_U|\nabla v|^2\dx \leq \frac1\e\int_U v^2\dx+\e^3\int_U|\nabla^2 v|^2\dx,
\end{equation*}
for every $\e\in (0,\e_0]$ and for every $v\in W^{2,2}(U)$.
\end{proposition}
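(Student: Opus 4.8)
The plan is to deduce the stated inequality from the classical (scale-fixed) Gagliardo-Nirenberg interpolation inequality on the fixed domain $U$, combined with a scaling argument based on Young's inequality. First I would apply \cite[Theorems 4.14 and 4.15]{A} with $j=1$, $m=2$, and all Lebesgue exponents equal to $2$; the dimensional balance then forces the interpolation exponent to be $\tfrac12$, which is admissible in every dimension $n\ge 1$. This gives a constant $C=C(U)>0$ with
\[
\|\nabla v\|_{L^2(U)} \le C\,\|\nabla^2 v\|_{L^2(U)}^{1/2}\,\|v\|_{L^2(U)}^{1/2} + C\,\|v\|_{L^2(U)}\qquad\text{for every }v\in W^{2,2}(U).
\]
The lower-order summand $C\|v\|_{L^2(U)}$ cannot in general be dropped here, precisely because $U$ is only assumed bounded; on all of $\R^n$ the purely homogeneous estimate holds, and then the argument below would work for every $\e>0$ with no need for $\e_0$. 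Squaring and using $(a+b)^2\le 2a^2+2b^2$ yields
\[
\int_U|\nabla v|^2\dx \le K\Big(\|\nabla^2 v\|_{L^2(U)}\,\|v\|_{L^2(U)} + \int_U v^2\dx\Big),\qquad K:=2C^2.
\]

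Next I would multiply this by $\e$ and split the mixed term by Young's inequality: writing $\e K\|\nabla^2 v\|_{L^2(U)}\|v\|_{L^2(U)} = K\big(\e^{3/2}\|\nabla^2 v\|_{L^2(U)}\big)\big(\e^{-1/2}\|v\|_{L^2(U)}\big)$ and using $ab\le\tfrac12 a^2+\tfrac12 b^2$, this term is bounded by $\tfrac K2\,\e^3\int_U|\nabla^2 v|^2\dx + \tfrac K2\,\tfrac1\e\int_U v^2\dx$. For the remaining term I would use that $\e\le\e_0$ implies $\e\le\e_0^2/\e$, so that $\e K\int_U v^2\dx\le\e_0^2 K\,\tfrac1\e\int_U v^2\dx$. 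Altogether,
\[
\e\int_U|\nabla v|^2\dx \le \Big(\tfrac K2+\e_0^2 K\Big)\frac1\e\int_U v^2\dx + \frac K2\,\e^3\int_U|\nabla^2 v|^2\dx\qquad\text{for every }\e\in(0,\e_0],
\]
and dividing through by $M:=\tfrac K2+\e_0^2 K=C^2(1+2\e_0^2)$ gives the claim with $c_0:=1/M=\big(C^2(1+2\e_0^2)\big)^{-1}$, which depends only on $\e_0$ and $U$, as required.

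The argument is elementary, so there is no genuine obstacle; the only point that needs a little care is the bookkeeping of the powers of $\e$. The structural reason it works is that the gradient term carries a factor $\e$ while the $L^2$ term carries a factor $1/\e$, so the rescaling $\|v\|_{L^2}\mapsto\e^{-1/2}\|v\|_{L^2}$, $\|\nabla^2 v\|_{L^2}\mapsto\e^{3/2}\|\nabla^2 v\|_{L^2}$ matches the homogeneity of the sharp Gagliardo-Nirenberg inequality exactly; this is also what allows the unavoidable lower-order term coming from the boundedness of $U$ to be absorbed, at the price of letting $c_0$ depend on the upper bound $\e_0$.
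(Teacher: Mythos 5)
Your proposal is correct and is essentially the argument the paper intends: the paper offers no proof beyond the citation to \cite[Theorems 4.14 and 4.15]{A}, and your derivation (multiplicative Gagliardo--Nirenberg on the bounded Lipschitz domain, squaring, Young's inequality with the weights $\e^{3/2}$ and $\e^{-1/2}$, and absorption of the lower-order term using $\e\le\e_0$) is a correct and complete way to pass from the cited inequality to the scaled form, with the constant $c_0$ depending only on $U$ and $\e_0$ as required. The only remark worth making is that Adams' Theorem 4.14 is already stated in the $\e$-parametrized additive form $\|\nabla v\|_{L^2(U)}\le K\e\|\nabla^2 v\|_{L^2(U)}+K\e^{-1}\|v\|_{L^2(U)}$ for $\e\in(0,\e_0]$, so squaring that and multiplying by $\e$ gives the claim in one line; your route through the multiplicative form plus Young's inequality is equivalent and equally valid.
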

Moreover, we also recall two {\it a priori} estimates for the Laplace operator (see \cite[Theorem 1, Section 6.3.1]{E} and \cite[Theorem 3.1.2.1 and Remark 3.1.2.2]{Gr}) that we use in Section \ref{secondmodel}.
\begin{proposition}\label{ellipt-reg}
Let $U$ be a bounded open subset of $\R^n$. Then
\begin{itemize}
\item[(i)] for each open subset $V\subset\subset U$ there exists a constant $c(U,V)>0$ such that
\begin{equation*}
\|v\|_{W^{2,2}(V)}\leq c(U,V)\left(\|\Delta v\|_{L^2(U)}+\|v\|_{L^2(U)}\right),
\end{equation*}
for all $v\in W^{2,2}(U)$;
\item[(ii)] if in addition $U$ has $C^2$ boundary, then there exists a constant $c(U)>0$ such that
\begin{equation}\label{ellipt-est}
\|v\|_{W^{2,2}(U)}\leq c(U)\|\Delta v\|_{L^2(U)},
\end{equation}
for all $v\in W^{2,2}(U)\cap W^{1,2}_0(U)$.
\end{itemize}
\end{proposition}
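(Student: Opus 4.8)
The plan is to reduce both estimates to $W^{2,2}$ regularity bounds proved by standard tools — on the whole space a one-line Plancherel computation, and near the boundary Nirenberg's method of difference quotients — and then to localize and patch. Throughout I set $f:=\Delta v\in L^2(U)$ and use the weak formulation $\int_U\nabla v\cdot\nabla\varphi\dx=-\int_U f\,\varphi\dx$ for admissible $\varphi$, together with the two basic facts about the difference quotient $D_k^h w(x):=(w(x+he_k)-w(x))/h$: a bound on $\|D_k^h w\|_{L^2}$ uniform in small $h$ is equivalent to $\partial_k w\in L^2$ with comparable norm, and discrete integration by parts and the discrete Leibniz rule hold.

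For part (i) I would first localize. Fix $V\subset\subset U$ and a cut-off $\zeta\in C_c^\infty(U)$ with $\zeta\equiv1$ on a neighbourhood of $\overline V$. Testing the weak formulation with $\varphi=\zeta^2v$ and using Young's inequality gives a Caccioppoli-type bound controlling $\|\nabla v\|_{L^2(\{\zeta=1\})}$ by $\|f\|_{L^2(U)}$ and $\|v\|_{L^2(U)}$. Then for each $k$ and small $|h|$ I would test with $\varphi=-D_k^{-h}\big(\zeta^2D_k^hv\big)$; moving one difference quotient onto the gradient, expanding by the Leibniz rule, and absorbing the terms carrying $\nabla\zeta$ by Young's inequality, one obtains a bound on $\int_U\zeta^2|D_k^h\nabla v|^2\dx$ uniform in $h$ in terms of $\|f\|_{L^2(U)}$ and $\|\nabla v\|_{L^2(\mathrm{supp}\,\zeta)}$. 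Letting $h\to0$, summing over $k$, and inserting the Caccioppoli bound gives exactly (i). (Alternatively, extending $\zeta v$ by zero and using the Plancherel identity $\|\nabla^2 w\|_{L^2(\R^n)}=\|\Delta w\|_{L^2(\R^n)}$ together with the same Caccioppoli bound leads to the same conclusion.)

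For part (ii) I would combine the interior estimate with a boundary estimate obtained by flattening. Covering $\partial U$ by finitely many $C^2$ charts, near each boundary point $U$ becomes a half-ball with its flat face corresponding to $\partial U$, the Laplacian turns into a uniformly elliptic operator with $C^1$ coefficients, and $v\in W^{1,2}_0(U)$ becomes a vanishing trace on the flat face. Since tangential translations (in the directions $e_1,\dots,e_{n-1}$) preserve the half-ball and the zero boundary datum, the difference-quotient argument of part (i) runs unchanged in those directions and controls all second derivatives except $\partial_{nn}v$, which is then read off algebraically from the equation and hence also lies in $L^2$ with the same type of bound. Patching the boundary charts with the interior estimate yields $\|v\|_{W^{2,2}(U)}\le C(\|\Delta v\|_{L^2(U)}+\|v\|_{L^2(U)})$. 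Finally, since $v\in W^{1,2}_0(U)$, the identity $\|\nabla v\|_{L^2(U)}^2=-\int_U v\,\Delta v\dx\le\|v\|_{L^2(U)}\|\Delta v\|_{L^2(U)}$ together with the Poincaré inequality $\|v\|_{L^2(U)}\le C_P\|\nabla v\|_{L^2(U)}$ gives $\|v\|_{L^2(U)}\le C_P^2\|\Delta v\|_{L^2(U)}$; substituting into the previous bound removes the lower-order term and proves \eqref{ellipt-est}.

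The main obstacle I expect is the boundary analysis in part (ii): one must verify that $C^2$ regularity of $\partial U$ is precisely what makes the flattened coefficients regular enough (Lipschitz, in fact $C^1$) for the tangential difference-quotient estimates to close, and one must check that the zero Dirichlet datum really is preserved under tangential translations after the change of variables. The interior estimate and the cut-off/Young bookkeeping are routine by comparison.
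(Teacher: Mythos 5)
The paper does not actually prove this proposition: it is recalled as a known \emph{a priori} estimate with references to Evans (Theorem 1, Section 6.3.1) and Grisvard (Theorem 3.1.2.1 and Remark 3.1.2.2), and your sketch is precisely the standard interior/boundary difference-quotient argument carried out in those sources, so it is correct and takes essentially the same route as the cited proofs. Your removal of the lower-order term in (ii) via $\|\nabla v\|_{L^2(U)}^2=-\int_U v\,\Delta v\,dx$ and the Poincar\'e inequality for $v\in W^{1,2}_0(U)$ is a clean and complete way to pass from the estimate with $\|v\|_{L^2(U)}$ on the right to \eqref{ellipt-est}.
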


Throughout the paper the parameter $\e$ varies in a strictly decreasing sequence of positive real numbers converging to zero.

Let $\a,\,\b>0$; we consider the functionals $\F_\e,\E_\e \colon L^1(\O)\times L^1(\O) \longrightarrow [0,+\infty]$ defined as
\begin{equation}\label{energy-hes}
\F_\e(u,v):=
\begin{cases} 
\ds \a\int_\O v^2|\nabla u|^2\dx +\frac{\b}{2\sqrt{2}}\int_\O\left(\frac{(v-1)^2}{\e}+{\e}^3|\nabla^2 v|^2 \right)\dx & \text{if}\;(u,v)\in W^{1,2}(\O)\times W^{2,2}(\O)\\
&\text{and}\; v\nabla u \in L^2(\O;\R^n), 
\cr\cr
+\infty & \text{otherwise},
\end{cases}
\end{equation}
and
\begin{equation}\label{energy-lap}
\E_\e(u,v):=
\begin{cases}
\ds \a\int_\O v^2 |\nabla u|^2\dx+\frac{\b}{2\sqrt{2}}\int_\O \left(\frac{(v-1)^2}{\e}+\e^3|\Delta v|^2\right)\dx & \text{if}\;(u,v)\in W^{1,2}(\O)\times W^{2,2}(\O)\\
&\text{and}\; v\nabla u \in L^2(\O;\R^n), 
\cr\cr
+\infty & \text{otherwise}.
\end{cases}
\end{equation}
%

We also consider the Ambrosio-Tortorelli functionals $\mathcal{AT}_\e\colon L^1(\O)\times L^1(\O) \longrightarrow [0,+\infty]$ 
\begin{equation}\label{f:AT}
\mathcal{AT}_\e(u,v):=
\begin{cases} 
\ds \a\int_\O v^2|\nabla u|^2\dx+\frac{\b}{2}\int_\O\left(\frac{(v-1)^2}{\e}+\e|\nabla v|^2\right)\dx & \text{if}\; (u,v)\in W^{1,2}(\O)\times W^{1,2}(\O)\\
&\text{and}\; v\nabla u \in L^2(\O;\R^n), 
\cr\cr
+\infty & \text{otherwise},
\end{cases}  
\end{equation}
and the Mumford-Shah functional $\MS \colon L^1(\O)\times L^1(\O) \longrightarrow [0,+\infty]$
\begin{equation}\label{Mu-Sh}
\MS(u,v):=
\begin{cases}
\ds \a\int_\O |\nabla u|^2\dx +\b\,\mathcal{H}^{n-1}(S_u) & \text{if}\; u\in GSBV^2(\O)\;\text{and}\;v=1\;\text{a.e.} ,\\
+\infty & \text{otherwise}.
\end{cases}
\end{equation}

\section{Optimal profile problem}
\noindent In this section we study the optimal profile problem
\begin{multline}\label{opt-prof}
\text{\it\textbf{m}}:= \inf\bigg\{\int_0^{+\infty} \left((f-1)^2+(f'')^2\right)\dt\colon f\in W^{2,2}_{\text{loc}}(0,+\infty),
\\
\quad f(0)=f'(0)=0,\;f(t)=1\; \text{if}\; t>M\; \text{for some}\; M>0 \bigg\}.
\end{multline}
The constant {\it\textbf{m}} represents the minimal cost, in terms of the unscaled, one-dimensional Modica-Mortola contribution in (\ref{energy-hes}) and (\ref{energy-lap}), for a transition from the value $0$ to the value $1$ on the positive real half-line.
\begin{theorem}\label{t:min-pb}
Let $\textbf{m}$ be as in (\ref{opt-prof}); then
\begin{equation}\label{min_prob}
\text{\it\textbf{m}}=\min\left\{\int_0^{+\infty} \left((f-1)^2+(f'')^2\right)\dt\colon f\in W^{2,2}_{\text{loc}}(0,+\infty),\;f(0)=f'(0)=0,\;\lim_{t\to +\infty}f(t)=1 \right\},
\end{equation}
moreover, $\textbf{m}=\sqrt{2}$.
\end{theorem}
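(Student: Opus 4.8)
The plan is to rewrite \eqref{min_prob} as a linear problem via the substitution $g:=1-f$ together with a completion-of-squares identity, and then to identify $\textbf{m}$ with the resulting minimum by truncating the (explicit) optimal profile.

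Write $g:=1-f$; then the competitors in \eqref{min_prob} are precisely the $g\in W^{2,2}_{\mathrm{loc}}(0,+\infty)$ with $g(0)=1$, $g'(0)=0$ and $\lim_{t\to+\infty}g(t)=0$, and the functional becomes $\int_0^{+\infty}(g^2+(g'')^2)\dt$. If this integral is $+\infty$ there is nothing to prove, so assume $g,g''\in L^2(0,+\infty)$. Applying Proposition~\ref{interpol-ineq} with $\e=1$ on each interval $(k,k+1)$, $k\in\N$ (the constant being the same for all $k$ by translation invariance) and summing over $k$, we obtain $g'\in L^2(0,+\infty)$ as well. Hence $g,g'\in W^{1,2}(0,+\infty)$, so $g(t)\to0$ and $g'(t)\to0$ as $t\to+\infty$, and $(g^2)'$, $((g')^2)'$, $(gg')'\in L^1(0,+\infty)$; consequently every boundary term at $+\infty$ occurring below vanishes, while the ones at $0$ are fixed by $g(0)=1$ and $g'(0)=0$.

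The key is the elementary identity
\[
g^2+(g'')^2=\big(g''+\sqrt 2\,g'+g\big)^2-2(g')^2-2\sqrt2\,g''g'-2g''g-2\sqrt2\,g'g .
\]
Integrating the last four terms on $(0,+\infty)$ and using $\int_0^{+\infty}g''g'\dt=\tfrac12[(g')^2]_0^{+\infty}$, $\int_0^{+\infty}g'g\dt=\tfrac12[g^2]_0^{+\infty}$ and $\int_0^{+\infty}g''g\dt=[g'g]_0^{+\infty}-\int_0^{+\infty}(g')^2\dt$ together with the boundary information above, one checks that they sum exactly to $\sqrt2\,g(0)^2=\sqrt2$. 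Therefore, for every admissible $g$,
\[
\int_0^{+\infty}\big(g^2+(g'')^2\big)\dt=\int_0^{+\infty}\big(g''+\sqrt2\,g'+g\big)^2\dt+\sqrt2\ \geq\ \sqrt2 ,
\]
with equality if and only if $g''+\sqrt2\,g'+g=0$. (The coefficient $\sqrt2$ is the only one turning the remainder into a pure boundary term; correspondingly the Euler--Lagrange equation $g^{(4)}+g=0$ factors as $(D^2+\sqrt2\,D+1)(D^2-\sqrt2\,D+1)=0$.) Imposing $g(0)=1$ and $g'(0)=0$, this Cauchy problem has the unique solution $g_*(t)=e^{-t/\sqrt2}\big(\cos(t/\sqrt2)+\sin(t/\sqrt2)\big)$, which satisfies $\lim_{t\to+\infty}g_*(t)=0$ and is thus admissible. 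Hence the infimum in \eqref{min_prob} is attained and equals $\sqrt2$.

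Finally we identify $\textbf{m}$. Any $f$ admissible in \eqref{opt-prof} (so $f\equiv1$ for $t>M$) is admissible in \eqref{min_prob}, whence $\textbf{m}\geq\sqrt2$. For the converse, fix a large $T$ and modify $f_*:=1-g_*$ by replacing it on $[T,T+1]$ with a fixed $C^1$ (say cubic Hermite) interpolation between the values $(f_*(T),f_*'(T))$ at $T$ and $(1,0)$ at $T+1$, and setting the competitor equal to $1$ for $t\geq T+1$; this produces an admissible function for \eqref{opt-prof}. Since $g_*$ and all its derivatives decay like $e^{-t/\sqrt2}$, both the discarded tail $\int_T^{+\infty}(g_*^2+(g_*'')^2)\dt$ and the energy of the interpolation on $[T,T+1]$ are $O(e^{-\sqrt2\,T})$, so this competitor has total energy $\sqrt2+o(1)$ as $T\to+\infty$; letting $T\to+\infty$ gives $\textbf{m}\leq\sqrt2$, which proves \eqref{min_prob}. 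The only genuinely delicate step is the a priori decay of finite-energy competitors used to annihilate the boundary terms at infinity — which is exactly where Proposition~\ref{interpol-ineq} enters; granted that, the completion of squares yields the sharp bound immediately and the recovery construction is routine.
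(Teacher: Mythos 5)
Your proof is correct, and while its overall skeleton (pass to the relaxed problem with only $\lim_{t\to+\infty}f(t)=1$, then recover the original infimum by a $C^1$ interpolation to the constant $1$ near infinity) matches the paper's, the way you establish the value $\sqrt2$ is genuinely different. The paper solves the Euler--Lagrange equation $f^{(4)}+f-1=0$, asserts that the relaxed infimum is attained at the resulting explicit profile, and obtains $\sqrt2$ by direct integration; the global optimality of the EL solution is left implicit (it would follow, e.g., from convexity of $g\mapsto\int(g^2+(g'')^2)$). Your completion-of-squares identity $g^2+(g'')^2=(g''+\sqrt2\,g'+g)^2+(\text{null Lagrangian})$ proves the sharp lower bound $\geq\sqrt2$ for \emph{every} finite-energy competitor at once, identifies the unique minimizer as the solution of the first-order factor $g''+\sqrt2\,g'+g=0$, and explains structurally where $\sqrt2$ comes from (the factorization $D^4+1=(D^2+\sqrt2 D+1)(D^2-\sqrt2 D+1)$); the price is that you must justify the vanishing of the boundary terms at infinity, which you do correctly via the Gagliardo--Nirenberg inequality on unit intervals --- the same tool the paper uses elsewhere in its compactness arguments. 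Your recovery step is the paper's argument (there phrased through the auxiliary functional $\mathcal{G}$ and a sequence $x_i\to+\infty$) made quantitative through the exponential decay of $g_*$. Two small remarks: your explicit minimizer $f_*(t)=1-\sqrt2\,e^{-t/\sqrt2}\cos\bigl(t/\sqrt2-\pi/4\bigr)$ is the one actually satisfying $f(0)=f'(0)=0$ (the phase $+\pi/4$ printed in \eqref{minimizer} gives $f(0)=2$ and appears to be a typo), and you could have skipped the interpolation inequality for the term $g(t)\to0$, which is already imposed by the admissibility constraint --- it is only needed for $g'(t)\to0$.
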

\begin{proof}
Let
$$
\tilde{\text{\it\textbf{m}}}:=\inf\left\{\int_0^{+\infty} \left((f-1)^2+(f'')^2\right)\dt\colon f\in W^{2,2}_{\text{loc}}(0,+\infty),\;f(0)=f'(0)=0,\;\lim_{t\to +\infty}f(t)=1 \right\}.
$$
By solving the associated Euler-Lagrange equation it is easily shown that $\tilde{\text{\it\textbf{m}}}$ is attained at
\begin{equation}\label{minimizer}
f(t)= 1+\sqrt2 e^{-\frac{t}{\sqrt{2}}}\cos\left(\frac{t}{\sqrt{2}}+\frac\pi4\right)
\end{equation}
(see Figure \ref{opt-prof-fig}). Moreover a direct computation yields $\tilde{\text{\it\textbf{m}}}=\sqrt{2}$.

We clearly have $\tilde{\text{\it\textbf{m}}} \leq {\text{\it\textbf{m}}}$\,, then to achieve \eqref{min_prob} it only remains to show the opposite inequality\ie $\tilde{\text{\it\textbf{m}}} \geq {\text{\it\textbf{m}}}$. To this end, we suitably modify $f$ so to obtain a test function for ${\text{\it\textbf{m}}}$\,. Let $x_i\to +\infty$ as $i\to +\infty$; it is easy to check that
\begin{equation}\label{prop_f}
\lim_{i\to +\infty}f(x_i)=1,\quad\lim_{i\to +\infty}f'(x_i)=0,
\end{equation}
with $f$ is as in (\ref{minimizer}).

We introduce the auxiliary function $\mathcal{G}:\R^2\longrightarrow [0,+\infty)$ given by
\begin{equation*}
\mathcal{G}(w,z):=\inf\left\{\int_0^1\left((g-1)^2+(g'')^2\right)\dt\colon g\in C^2([0,1]),\;g(0)=w,\;g(1)=1,\;g'(0)=z,\;g'(1)=0\right\};
\end{equation*}
testing $\mathcal{G}$ with a third-degree polynomial satisfying the boundary conditions, one can easily show that
\begin{equation}\label{lim-g}
\lim_{(w,z)\to(1,0)}\mathcal{G}(w,z)=0.
\end{equation}
Fix $\eta>0$ and let $g$ be a test function for $\mathcal{G}(f(x_i),f'(x_i))$ such that
\begin{equation*}
\int_0^1\left((g-1)^2+(g'')^2\right)\dt\leq \mathcal{G}(f(x_i),f'(x_i))+\eta.
\end{equation*}
Set $g_i(t):=g(t-x_i)$ and define
\begin{equation*}
f_i(t):=
\begin{cases}
\ds f(t) & \text{if }\; 0\leq t\leq x_i ,\cr
g_i(t) & \text{if }\; x_i\leq t\leq x_i+1 ,\cr
1 & \text{if }\; t\geq x_i+1.
\end{cases}
\end{equation*}
Then $f_i$ is admissible for {\text{\it\textbf{m}}} and we have
\begin{eqnarray*}
\tilde{\text{\it\textbf{m}}} & = & \int_0^{+\infty}\left((f-1)^2+(f'')^2\right)\dt\geq \int_0^{x_i}\left((f-1)^2+(f'')^2\right)\dt\\
& = & \int_0^{+\infty}\left((f_i-1)^2+(f_i'')^2\right)\dt -\int_{x_i}^{x_i+1}\left((g_i-1)^2+(g_i'')^2\right)\dt\\
& \geq & {\text{\it\textbf{m}}} - \mathcal{G}(f(x_i),f'(x_i))-\eta.
\end{eqnarray*}
Invoking (\ref{prop_f}) and (\ref{lim-g}), we conclude by first letting $i\to +\infty$ and then $\eta \to 0^+$.
\end{proof}

\begin{figure}[!!!h]
\includegraphics[scale=0.5]{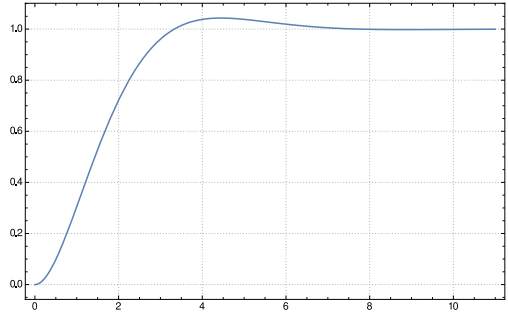}
\caption{The optimal profile $f$.}
\label{opt-prof-fig}
\end{figure}

\begin{remark}
{\rm For $d\in \R$, set
\begin{equation*}
{\text{\it\textbf{m}}}_d:=\min\left\{\int_0^{+\infty} \left((f-1)^2+(f'')^2\right)\dt\colon f\in W^{2,2}_{\text{loc}}(0,+\infty),\;f(0)=d,\;f'(0)=0,\;\lim_{t\to+\infty}f(t)=1\right\}.
\end{equation*}
A direct computation gives 
\begin{equation}\label{d-optprof}
{\text{\it\textbf{m}}}_d=\sqrt{2}(d-1)^2,
\end{equation} 
hence $\lim_{d\to 0}{\text{\it\textbf{m}}}_d=\sqrt{2}$.}
\end{remark}

\section{The first model}\label{sect3}
\noindent In this section we study the $\G\hbox{-}$convergence of the functionals $\F_\e$ defined in (\ref{energy-hes}). The one-dimensional case is considered first: this is the object of Subsection \ref{s:1D}. Then, Subsection \ref{n-dim} is devoted to the $n$-dimensional case, with $n\geq 2$. Finally, in Subsection \ref{s:min-pb} we deal with the equicoercivity of a suitable modification of $\F_\e$ and with the convergence of the associated minimization problems.

\medskip

In all that follows $c$ denotes a generic positive constant which may vary from line to line within the same formula. 

\subsection{The one-dimensional case}\label{s:1D}
Let $-\infty<a<b<+\infty$ and let $\F_\e\colon L^1(a,b)\times L^1(a,b)\longrightarrow [0,+\infty]$ be the one-dimensional version of the functional in (\ref{energy-hes})\ie
\begin{equation*}
\F_\e(u,v):=
\begin{cases}
\ds \a\int_a^b v^2(u')^2\dt+\frac{\b}{2\sqrt{2}}\int_a^b\left(\frac{(v-1)^2}{\e}+\e^3(v'')^2 \right)\dt & \text{if}\; (u,v)\in W^{1,2}(a,b)\times W^{2,2}(a,b) ,\\
+\infty & \text{otherwise}.
\end{cases}
\end{equation*}
Notice that in this case the condition $v u'\in L^2(a,b)$ is automatically satisfied for any pair $(u,v)\in W^{1,2}(a,b)\times W^{2,2}(a,b)$.

\medskip

We have the following $\G$-convergence result. 
\begin{theorem}\label{gconv-1D}
The sequence $(\F_\e)$ $\G\hbox{-}$converges, with respect to the $(L^1(a,b)\times L^1(a,b))\,\hbox{-}$topology, to the functional $\MS\colon L^1(a,b)\times L^1(a,b)\longrightarrow [0,+\infty]$ given by
\begin{equation*}
\MS(u,v):=
\begin{cases}
\ds \a\int_a^b (u')^2\dt +\b\,\#(S_u) & \text{if}\; u\in SBV^2(a,b)\;\text{and}\;v=1\;\text{a.e.} ,\\
+\infty & \text{otherwise},
\end{cases}
\end{equation*}
where $\#(S_u)$ denotes the number of discontinuity points of $u$ in $(a,b)$.
\end{theorem}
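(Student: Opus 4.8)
The plan is to establish the two inequalities defining $\Gamma$-convergence separately: the $\Gamma$-$\liminf$ (lower bound) and the $\Gamma$-$\limsup$ (recovery sequence). Throughout, the hard case is the lower bound; the upper bound follows by explicit construction using the optimal profile of Theorem \ref{t:min-pb}.

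For the \emph{lower bound}, let $(u_\e,v_\e)\to(u,v)$ in $L^1(a,b)\times L^1(a,b)$ with $\sup_\e\F_\e(u_\e,v_\e)<+\infty$; we may assume the liminf is a finite limit. The first step is to record that $v=1$ a.e.: since $\int_a^b(v_\e-1)^2\dt\le c\,\e\to0$, we get $v_\e\to1$ in $L^2$, hence $v=1$. The second, crucial step is to upgrade the control on $v_\e$. The energy bound gives $\tfrac1\e\int(v_\e-1)^2+\e^3\int(v_\e'')^2\le c$, so by the Gagliardo-Nirenberg inequality of Proposition \ref{interpol-ineq} (applied to $v_\e-1$) we obtain $\e\int_a^b(v_\e')^2\dt\le c$. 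The third step is the standard ``two wells'' mechanism: because $\alpha\int v_\e^2(u_\e')^2\dt\le c$ and $v_\e\to1$, the function $u_\e$ has gradient that is $L^2$-bounded away from the small set where $v_\e$ is close to $0$; combined with a Modica-Mortola-type argument one deduces that $u\in SBV^2(a,b)$ and that $\#(S_u)$ is finite. More precisely, near each point where $v_\e$ drops from near $1$ to near $0$ and back, one extracts from the one-dimensional profile a transition layer; away from those layers the lower bound $\alpha\int(u')^2$ is obtained by lower semicontinuity of the $L^2$-norm of the (truncated) gradient on the complement of the bad set. The fourth step is the transition-cost estimate: localizing to a small interval $(x_i-\delta,x_i+\delta)$ around a discontinuity point $x_i$ of $u$ and rescaling $t\mapsto x/\e$, the rescaled profiles $w_\e(s):=v_\e(\e s)$ satisfy $w_\e(0^-)\approx 0$ and $w_\e\to1$ at the ends, so
\begin{equation*}
\liminf_\e\frac{\beta}{2\sqrt2}\int_{x_i-\delta}^{x_i+\delta}\Big(\frac{(v_\e-1)^2}{\e}+\e^3(v_\e'')^2\Big)\dt\ \ge\ \frac{\beta}{2\sqrt2}\cdot 2\,\textbf{m}\ =\ \beta,
\end{equation*}
using $\textbf{m}=\sqrt2$ and the fact that each discontinuity forces \emph{two} one-sided transitions (one on each side, or a symmetric double transition), which is exactly why the normalizing constant $\tfrac{1}{2\sqrt2}$ produces the clean coefficient $\beta$. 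Summing over the finitely many discontinuity points and adding the bulk term yields $\liminf_\e\F_\e(u_\e,v_\e)\ge\alpha\int_a^b(u')^2\dt+\beta\,\#(S_u)=\MS(u,1)$. I expect the main obstacle here to be the rigorous separation of the bulk and interface contributions --- ensuring that the layers around distinct discontinuity points are disjoint for $\e$ small and that no energy is ``double counted'' --- together with the careful passage from the a priori $W^{1,2}$-bound on $v_\e$ to the conclusion that $u$ has only finitely many jumps; this is the point where the absence of the Modica-Mortola trick bites, and one must instead argue directly that each jump of $u$ costs at least $\textbf{m}/\sqrt2$ worth of the Modica-Mortola term.

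For the \emph{upper bound}, given $(u,v)$ with $\MS(u,v)<+\infty$ (so $v=1$ a.e.\ and $u\in SBV^2(a,b)$ with $S_u=\{t_1,\dots,t_N\}$), I would build a recovery sequence explicitly. Take $u_\e:=u$ away from a shrinking neighborhood of $S_u$, suitably interpolated (or simply $u_\e = u$, exploiting that the bulk term is insensitive to $v_\e$ being $0$ on a null-in-the-limit set, with a standard mollification/truncation to keep $u_\e\in W^{1,2}$); for $v_\e$, around each $t_j$ insert a rescaled copy of the optimal profile $f$ from \eqref{minimizer}, namely $v_\e(t):=f(|t-t_j|/\e)$ on $(t_j-\e M_\e,\ t_j+\e M_\e)$ glued to the constant $1$ outside, where $M_\e\to+\infty$ slowly; away from all the $t_j$ set $v_\e\equiv1$. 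One checks $v_\e\to1$ in $L^1$, $u_\e\to u$ in $L^1$, and by the change of variables $t=t_j+\e s$ that each interface contributes $\frac{\beta}{2\sqrt2}\cdot2\int_0^{+\infty}\big((f-1)^2+(f'')^2\big)\,ds+o(1)=\beta+o(1)$, while $\alpha\int v_\e^2(u_\e')^2\dt\to\alpha\int_a^b(u')^2\dt$ since $v_\e=1$ outside a set of measure $O(\e M_\e)\to0$ and $u_\e'\in L^2$. To match the prescribed boundary-type condition $(v_\e-1)\in W^{1,2}_0$ (relevant for the sibling functional $\E_\e$, though harmless here), one cuts off near $a,b$ at negligible cost using \eqref{lim-g}-type estimates. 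Summing over $j=1,\dots,N$ gives $\limsup_\e\F_\e(u_\e,v_\e)\le\alpha\int_a^b(u')^2\dt+\beta N=\MS(u,1)$, completing the proof. A standard density argument (approximating a general $u\in SBV^2$ by functions with finitely many jumps and $W^{1,2}$ pieces, which here is automatic since $\#(S_u)<+\infty$ already) and diagonalization finish the $\Gamma$-$\limsup$ inequality.
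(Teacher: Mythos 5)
Your proposal follows essentially the same route as the paper's proof: Gagliardo--Nirenberg interpolation to recover the a priori bound $\e\int(v_\e')^2\dt\le c$, a Modica--Mortola-type compactness argument to confine the jumps of $u$ to a finite set, a cost of $2\,\textit{\textbf{m}}=2\sqrt2$ per jump coming from two one-sided optimal-profile transitions (normalized by $\tfrac{1}{2\sqrt2}$ to give $\b$), and a recovery sequence built from rescaled near-optimal profiles with $u$ cut off near each jump. The only caveats are that your parenthetical ``or simply $u_\e=u$'' cannot work since $u\notin W^{1,2}(a,b)$ (the cut-off you also mention is the correct and necessary route), and that the transition-cost step you flag as the expected obstacle is resolved in the paper by splitting each dip of $v_\e$ at its minimum point, where $v_\e'=0$, so that each rescaled half becomes --- after gluing a vanishing-cost correction via the auxiliary function $\mathcal{G}$ --- an admissible competitor for $\textit{\textbf{m}}_d$ with $d\to0$.
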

\begin{proof}

For the sake of notation, for any open set $U\subset (a,b)$ we consider the localized functionals
\begin{equation}\label{energy-1Dloc}
\F_\e(u,v,U):=
\begin{cases}
\ds \a\int_U v^2(u')^2\dt+\frac{\b}{2\sqrt{2}}\int_U\left(\frac{(v-1)^2}{\e}+\e^3(v'')^2 \right)\dt & \text{if}\;(u,v)\in W^{1,2}(U)\times W^{2,2}(U) ,\\
+\infty & \text{otherwise}.
\end{cases}
\end{equation}
We divide the proof into two steps in which we analyze separately the liminf-inequality and the limsup-inequality. 

\textit{Step 1: liminf-inequality}. Let $(u,v)\in L^1(a,b)\times L^1(a,b)$ and $(u_\e,v_\e)\subset L^1(a,b)\times L^1(a,b)$ be such that $(u_\e, v_\e)\rightarrow (u, v)$ in $L^1(a,b)\times L^1(a,b)$. We want to prove that
\begin{equation}\label{liminf-1D}
\liminf_{\e\to 0}\F_\e(u_\e,v_\e)\geq\MS(u,v).
\end{equation}
Clearly it is enough to consider the case 
$$
\liminf_{\e\to 0} \F_\e(u_\e,v_\e)<+\infty.
$$ 
Then, up to subsequences, $\|v_\e-1\|_{L^2(a,b)}\leq c\,\sqrt\e$, from which we immediately deduce that $v=1$ a.e. in $(a,b)$.

According to Proposition \ref{interpol-ineq} there exists a positive constant $c_0>0$ such that for $\e$ sufficiently small we have
\begin{equation*}
c_0\,\int_a^b \e (v_\e')^2\dt\leq \int_a^b \frac{(v_\e-1)^2}{\e}\dt+\int_a^b \e^3(v_\e'')^2\dt.
\end{equation*}
Therefore
\begin{equation*}
\int_a^b\left(\frac{(v_\e-1)^2}{\e}+\e\,(v_\e')^2\right)\dt\leq c \quad \text{for $\e>0$ sufficiently small};
\end{equation*} 
hence we can apply \cite[Lemma 6.2 and Remark 6.3]{B}, with $Z=\{1\}$ and $W(s)=(s-1)^2$ to conclude that there exists a finite set $S$ such that, for every fixed open set $U \subset \subset (a,b)\setminus S$ and for $\e>0$ sufficiently small, $1/2<v_\e<3/2$ on $U$. For every such fixed $U$ we have
\begin{equation*}
\frac{\a}{4}\,\sup_{\e>0}\int_U (u_\e')^2\dt\leq\a\,\sup_{\e>0}\int_a^b v_\e^2 (u_\e')^2\dt<+\infty;
\end{equation*}
thus $u\in W^{1,2}(U)$ and $u_\e\wto u$ in $W^{1,2}(U)$. Moreover, since $v_\e\to 1$ in $L^2(U)$ and $u_\e'\wto u'$ in $L^2(U)$, then $u_\e'v_\e\wto u'$ in $L^1(U)$; hence we have 
\begin{equation}\label{est1}
\a\int_U(u')^2\dt\leq\a\,\liminf_{\e\to 0}\int_U v_\e^2(u_\e')^2\dt\leq\a\,\liminf_{\e\to 0}\int_a^b v_\e^2(u_\e')^2\dt.
\end{equation}
By the arbitrariness of $U$, (\ref{est1}) can be rewritten as
\begin{equation}\label{est2}
\a\int_{(a,b)\setminus(S+[-\eta,\eta])}(u')^2\dt\leq\a\,\liminf_{\e\to 0}\int_{(a,b)\setminus(S+[-\eta,\eta])} v_\e^2(u_\e')^2\dt\leq\a\,\liminf_{\e\to 0}\int_a^b v_\e^2(u_\e')^2\dt,
\end{equation}
for every $\eta>0$. This allows us to conclude that $u\in SBV^2(a,b)$ and $S_u\subset S$.

Let $N:=\#(S_u)$, $S_u:=\{t_1,\dots,t_N\}$ with $t_1<t_2<\dots<t_N$, and consider pairwise disjoint intervals $I_i=(a_i,b_i)\subset(a,b)$ with $t_i\in I_i$; we want to show that, for every $i\in\{1,\dots,N\}$,
\begin{equation}\label{liminf-i}
\liminf_{\e\to 0}\F_\e(u_\e,v_\e,I_i)\geq \b.
\end{equation}
To this end, fix $i\in\{1,\dots,N\}$. Let $t_i\in I_i'\subset\subset I_i$ and set $m_i:=\liminf_{\e\to 0}\,\inf_{t\in I_i'}\,(v_\e(t))^2$. Since $I_i'$ contains a discontinuity point of $u$, we want to show that it is convenient for $v_\e$ to be ``close'' to zero in $I_i'$\ie that $m_i=0$. We argue by contradiction assuming that $m_i>0$. Then for every $t\in I_i'$, we have
\begin{equation*}
\liminf_{\e\to 0}\,\frac{(v_\e(t))^2}{m_i}\geq 1,
\end{equation*}
therefore
\begin{equation*}
\int_{I_i'}(u_\e')^2\dt\leq\frac{1}{m_i}\int_{I_i'}v_\e^2(u_\e')^2\dt\leq\frac{\F_\e(u_\e,v_\e)}{m_i}\leq c\,,
\end{equation*}
so that $u_\e\wto u$ in $W^{1,2}(I_i')$ and $S_u\cap I_i'=\emptyset$ which contradicts the hypothesis $t_i\in I_i'$. Hence, we must have $m_i=0$. As a consequence there exists a sequence $(s_\e^i)\subset I_i'$ such that 
\begin{equation}\label{lim-vs}
\lim_{\e\to 0}v_\e(s_\e^i)=0.
\end{equation}
On the other hand, up to subsequences (not relabelled), $v_\e\to 1$ a.e. in $(a,b)$ as $\e\to 0$. Therefore, there exist $\tilde{r}_\e^i$, $r_\e^i\in I_i$ such that $\tilde{r}_\e^i<s_\e^i<r_\e^i$ and
\begin{equation}\label{lim-vr}
\lim_{\e\to 0} v_\e(\tilde{r}_\e^i)=1,\quad \lim_{\e\to 0} v_\e(r_\e^i)=1.
\end{equation}
Moreover, again appealing to the interpolation inequality Proposition \ref{interpol-ineq} we may deduce that $\e v_\e'\to 0$ in $L^1(a,b)$. Indeed
\begin{eqnarray*}
\int_a^b\e |v_\e'|\dt & \leq & \e^{1/2}(b-a)^{1/2}\left(\int_a^b \e\,(v_\e')^2\dt\right)^{1/2}\\
& \leq & \e^{1/2} (b-a)^{1/2}\left(\frac{1}{c_0}\,\int_a^b\left(\frac{(v_\e-1)^2}{\e}+\e^3(v_\e'')^2\right)\dt\right)^{1/2}\\
&\leq& c\,\e^{1/2} \to 0\quad \text{as $\e\to 0$.}
\end{eqnarray*}
Then up to subsequences (not relabelled) it is not restrictive to suppose that
\begin{equation*}
\lim_{\e\to 0}\e\, v_\e'(\tilde{r}_\e^i) =0 \quad \text{and}\quad\lim_{\e\to 0}\e\, v_\e'(r_\e^i)=0.
\end{equation*}
Let $\tilde{s}_\e^i$ be a minimum point for $v_\e$ in $[\tilde{r}_\e^i,r_\e^i]$. In view of (\ref{lim-vs}) and (\ref{lim-vr}) we deduce that $\tilde{s}_\e^i\in(\tilde{r}_\e^i,r_\e^i)$, for $\e$ sufficiently small. Then, since $v_\e\in C^1([\tilde{r}_\e^i,r_\e])$ we have $v_\e'(\tilde{s}_\e^i)=0$. Moreover $v_\e(\tilde{s}_\e^i)\leq v_\e(s_\e^i)<v_\e(r_\e^i)$, hence (\ref{lim-vr}) implies that $v_\e(\tilde{s}_\e^i)<1$ for $\e$ small.

We have
\begin{eqnarray}
\F_\e(u_\e,v_\e,I_i) & \geq & \frac{\b}{2\sqrt{2}}\int_{I_i}\left(\frac{(v_\e-1)^2}{\e}+\e^3(v_\e'')^2 \right)\dt\nonumber\\
& \geq & \frac{\b}{2\sqrt{2}}\,\left(\int_{\tilde{r}_\e^i}^{\tilde{s}_\e^i}\left(\frac{(v_\e-1)^2}{\e}+\e^3(v_\e'')^2 \right)\dt + \int_{\tilde{s}_\e^i}^{r_\e^i}\left(\frac{(v_\e-1)^2}{\e}+\e^3(v_\e'')^2 \right)\dt\right).\label{est3}
\end{eqnarray}
We now estimate from below the term
\begin{equation*}
\int_{\tilde{s}_\e^i}^{r_\e^i}\left(\frac{(v_\e-1)^2}{\e}+\e^3(v_\e'')^2 \right)\dt = \int_0^{\frac{r_\e^i-\tilde{s}_\e^i}{\e}}\left((w_\e-1)^2+(w_\e'')^2\right)\dz,
\end{equation*}
where $z=(t-\tilde{s}_\e^i)/\e$ and $w_\e(z):=v_\e(\e z+\tilde{s}_\e^i)$. To this end let $g_{\e,i}\in C^2([0,1])$ be an admissible function for $\mathcal{G}(v_\e(r_\e^i),\e\, v_\e'(r_\e^i))$\ie $g_{\e,i}(0)=v_\e(r_\e^i)$, $g_{\e,i}(1)=1$, $g'_{\e,i}(0)=\e\, v'_\e(r_\e^i)$, $g'_{\e,i}(1)=0$.  By construction 
\begin{equation*}
\lim_{\e\to 0}g_{\e,i}(0)=1 \quad\text{and}\quad\lim_{\e\to 0}g'_{\e,i}(0)= 0,
\end{equation*}
hence by (\ref{lim-g}) we infer
\begin{equation}\label{lim-gv}
\lim_{\e\to 0}\mathcal{G}(v_\e(r_\e^i),\e\, v_\e'(r_\e^i))=0.
\end{equation}
Let $(\tilde{v}_{\e,i})$ be the sequence defined as
\begin{equation*}
\tilde{v}_{\e,i}(z):=
\begin{cases}
w_\e(z) & \text{if }\; 0\leq z\leq \ds\frac{r_\e^i-\tilde{s}_\e^i}{\e} ,\cr
g_{\e,i}\left(z-\ds\frac{r_\e^i-\tilde{s}_\e^i}{\e}\right) & \text{if }\;\ds\frac{r_\e^i-\tilde{s}_\e^i}{\e}\leq z\leq\frac{r_\e^i-\tilde{s}_\e^i}{\e}+1 ,\cr
1 & \text{if }\;z\geq\ds\frac{r_\e^i-\tilde{s}_\e^i}{\e}+1.
\end{cases}
\end{equation*}
By definition of $g_{\e,i}$ it follows that $(\tilde{v}_{\e,i})\subset W^{2,2}_{\text{loc}}(0,+\infty)$. Since $(\tilde{v}_{\e,i})$ is a test function for $\textit{\textbf{m}}_{v_\e(\tilde{s}_\e^i)}$ (with $\textit{\textbf{m}}_{v_\e(\tilde{s}_\e^i)}$ as in (\ref{d-optprof}) with $d=v_\e(\tilde{s}_\e^i)$), we have
\begin{eqnarray*}
\int_0^{\frac{r_\e^i-\tilde{s}_\e^i}{\e}}\left((w_\e-1)^2+(w_\e'')^2\right)\dz  & = & \int_0^{+\infty} \left((\tilde{v}_{\e,i}-1)^2+(\tilde{v}_{\e,i}'')^2\right)\dz-\mathcal{G}(v_\e(r_\e^i),\e\, v_\e'(r_\e^i))\\
& \geq & \textit{\textbf{m}}_{v_\e(\tilde{s}_\e^i)}-\mathcal{G}(v_\e(r_\e^i),\e\, v_\e'(r_\e^i)).
\end{eqnarray*}
A similar argument applies to the term 
\begin{equation*}
\int_{\tilde{r}_\e^i}^{\tilde{s}_\e^i}\left(\frac{(v_\e-1)^2}{\e}+\e^3(v_\e'')^2 \right)\dt
\end{equation*}
in (\ref{est3}). Since $\textit{\textbf{m}}_{v_\e(\tilde{s}_\e^i)}=\sqrt{2}(v_\e(\tilde{s}_\e^i)-1)^2$ and $v_\e(\tilde{s}_\e^i)\leq v_\e(s_\e^i)<1$, we have
\begin{eqnarray*}
\mathcal{F_\e}(u_\e,v_\e,I_i) & \geq & \frac{\b}{\sqrt{2}}\,\textit{\textbf{m}}_{v_\e(\tilde{s}_\e^i)} -\frac{\b}{\sqrt{2}}\,\mathcal{G}(v_\e(r_\e^i),\e\, v_\e'(r_\e^i)))\geq \frac{\b}{\sqrt{2}}\,\min_{d\leq v_\e(s_\e^i)}\textit{\textbf{m}}_d-\frac{\b}{\sqrt{2}}\,\mathcal{G}(v_\e(r_\e^i),\e\, v_\e'(r_\e^i)))\\
& = & \frac{\b}{\sqrt{2}}\,\textit{\textbf{m}}_{v_\e(s_\e^i)}-\frac{\b}{\sqrt{2}}\,\mathcal{G}(v_\e(r_\e^i),\e\, v_\e'(r_\e^i)))=\b\,(v_\e(s_\e^i)-1)^2-\frac{\b}{\sqrt{2}}\,\mathcal{G}(v_\e(r_\e^i),\e\, v_\e'(r_\e^i)))
\end{eqnarray*}
and (\ref{liminf-i}) follows from (\ref{lim-gv}) letting $\e\to 0$.

Since the intervals $I_i$ are pairwise disjoint, gathering (\ref{est2}) and (\ref{liminf-i}), we get
\begin{eqnarray*}
\liminf_{\e\to 0}\F_\e(u_\e,v_\e) & \geq & \a\,\liminf_{\e\to 0}\int_a^b v_\e^2(u_\e')^2\dt +\frac{\b}{2\sqrt{2}}\,\liminf_{\e\to 0}\int_a^b \left(\frac{(v_\e-1)^2}{\e}+\e^3(v_\e'')^2 \right)\dt\\
& \geq & \a\int_{(a,b)\setminus(S+[-\eta,\eta])}(u')^2\dt+\b\,\#(S_u).
\end{eqnarray*}
Hence the liminf-inequality (\ref{liminf-1D}) follows from the arbitrariness of $\eta$.

\medskip

\textit{Step 2: limsup-inequality}. Let $u\in SBV^2(a,b)$, $S_u:=\{t_1,\dots,t_N\}$ with $t_1<t_2<\dots<t_N$, and set $t_0:=a$, $t_{N+1}:=b$, $\d_0:=\min_i\{t_{i+1}-t_i\colon i=0,\dots,N\}$. For $i=1,\dots,N$, define $I_i:=\left[\frac{t_{i-1}+t_i}{2},\frac{t_i+t_{i+1}}{2}\right]$.

We now construct a recovery sequence for the $\G\hbox{-}$limit. 

Let $\eta>0$; there exist a function $f_\eta\in W^{2,2}_{\text{loc}}(0,+\infty)$ and a constant $M_\eta >0$ such that $f_\eta(0)=f_\eta'(0)=0$, $f_\eta(t)=1$ for all $ t>M_\eta$, and
\begin{equation}\label{est4}
\int_0^{M_\eta} \left((f_\eta-1)^2+(f_\eta'')^2\right)\dt \leq \sqrt{2}+\eta.
\end{equation}
Let $\xi_\e>0$ be such that $\xi_\e/\e\to 0$ as $\e\to 0$, then for $\e$ sufficiently small $\xi_\e<\frac{\d_0}{2}$. For $i=1,\dots,N$ let $\varphi_\e^i$ be a cut-off function between $\left(t_i-\frac{\xi_\e}{2},t_i+\frac{\xi_\e}{2}\right)$ and $(t_i-\xi_\e,t_i+\xi_\e)$\ie $\varphi_\e^i\in C_c^\infty(t_i-\xi_\e,t_i+\xi_\e)$, $0\leq\varphi_\e^i\leq 1$ and $\varphi_\e^i\equiv 1$ on $\left(t_i-\frac{\xi_\e}{2},t_i+\frac{\xi_\e}{2}\right)$. Define the sequence
\begin{equation*}
u_\e(x):=u(x)\left(1-\sum_{i=1}^N\varphi_\e^i(x)\chi_{I_i}(x)\right);
\end{equation*}
then $(u_\e)\subset W^{1,2}(a,b)$ and $u_\e\to u$ in $L^1(a,b)$ by the dominated convergence Theorem.

Fix $T>M_\eta$; then $\frac{\d_0-2\xi_\e}{2\e}>T$, for $\e$ small. Define the sequence
\begin{equation*}
v_{\e}(t):=
\begin{cases}
0 & \text{if }\; |t-t_i|\leq\xi_\e ,\cr
\ds f_\eta\left(\frac{|t-t_i|-\xi_\e}{\e}\right) & \text{if }\;\xi_\e\leq|t-t_i|\leq\xi_\e+\e T ,\cr
1 & \text{if }\;t\in(\{|t-t_i|\geq\xi_\e+\e T\}\cap I_i)\cup\left[a,\frac{a+t_1}{2}\right]\cup\left[\frac{t_N+b}{2},b\right],
\end{cases}
\end{equation*}
for $i=1,\cdots,N$, where $f_\eta$ is as in (\ref{est4}).
It is immediate to check that $(v_\e)\subset  W^{2,2}(a,b)$ and $v_\e\to 1$ in $L^1(a,b)$. Moreover for every $i=1,\dots, N$, we have
\begin{equation}\label{c:ls}
\int_{I_i} v_\e^2 (u_\e')^2\dt = \int_{I_i\setminus [t_i-\xi_\e,t_i+\xi_\e]} v_\e^2 (u')^2\dt
\end{equation}
and
\begin{eqnarray}\label{est5}\nonumber
&\quad&\ds\int_{I_i}\left(\frac{(v_\e-1)^2}{\e}+\e^3(v_\e'')^2\right)\dt\\
\qquad & = & \int_{t_i-\xi_\e-\e T}^{t_i-\xi_\e}\left(\frac{1}{\e}\left(f_\eta\left(\frac{t_i-t-\xi_\e}{\e}\right)-1\right)^2+\e^3\left(f_\eta''\left(\frac{t_i-t-\xi_\e}{\e}\right)\right)^2\right)\dt\nonumber\\
& & +\int_{t_i+\xi_\e}^{t_i+\xi_\e+\e T}\left(\frac{1}{\e}\left(f_\eta\left(\frac{t-t_i-\xi_\e}{\e}\right)-1\right)^2+\e^3\left(f_\eta''\left(\frac{t-t_i-\xi_\e}{\e}\right)\right)^2\right)\dt+\int_{t_i-\xi_\e}^{t_i+\xi_\e}\frac{1}{\e}\dt\nonumber\\
& = & 2\int_0^T\left((f_\eta(z)-1)^2+(f_\eta''(z))^2\right)\dz+2\frac{\xi_\e}{\e}\nonumber\\
& \leq & 2\sqrt{2}+\eta+2\frac{\xi_\e}{\e}.
\end{eqnarray}
Therefore gathering (\ref{c:ls}) and (\ref{est5}) gives
\begin{eqnarray*}
\F_\e(u_\e,v_\e) &=& \a\sum_{i=1}^N\int_{I_i} v_\e^2(u')^2\dt+\frac{\b}{2\sqrt{2}}\,\sum_{i=1}^N\int_{I_i}\left(\frac{(v_\e-1)^2}{\e}+\e^3(v_\e'')^2\right)\dt
\\
&\leq &\a\sum_{i=1}^N \int_{I_i\setminus [t_i-\xi_\e,t_i+\xi_\e]} v_\e^2 (u')^2\dt+ \b N +c \frac{\xi_\e}{\e}+\tilde\eta. 
\end{eqnarray*}
Finally, invoking the dominated convergence Theorem yields
\begin{equation*}
\limsup_{\e\to 0}\F_\e(u_\e,v_\e)\leq\a\int_a^b(u')^2\dt+\b\,\#(S_u)+\tilde\eta
\end{equation*}
and thus the thesis.
\end{proof}

\subsection{The $n$-dimensional case}\label{n-dim}
Let $n\geq 2$ and let $\F_\e$ be the functional defined as in (\ref{energy-hes}). The following $\G$-convergence result holds true. 
\begin{theorem}\label{gconv-nD}
The sequence $(\F_\e)$ $\G\hbox{-}$converges, with respect to the $(L^1(\O)\times L^1(\O))$-topology, to the functional $\MS\colon L^1(\O)\times L^1(\O)\longrightarrow [0,+\infty]$ given by
\begin{equation*}
\MS(u,v):=
\begin{cases}
\ds \a\int_\O|\nabla u^2|\dx +\b\,\mathcal{H}^{n-1}(S_u) & \text{if}\; u\in GSBV^2(\O)\;\text{and}\;v=1\;\text{a.e.} ,\\
+\infty & \text{otherwise},
\end{cases}
\end{equation*}

\end{theorem}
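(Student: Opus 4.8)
\textbf{Proof strategy for Theorem \ref{gconv-nD}.}
The plan is to treat the liminf-inequality and the limsup-inequality separately, following the standard scheme for $\Gamma$-convergence of Ambrosio--Tortorelli type functionals but replacing the ``Modica--Mortola trick'' with the slicing reduction to the one-dimensional result of Theorem \ref{gconv-1D}. Throughout I would assume, as usual, that we are dealing with a sequence $(u_\e, v_\e)\to(u,v)$ in $L^1(\O)\times L^1(\O)$ with $\sup_\e \F_\e(u_\e,v_\e)<+\infty$; otherwise there is nothing to prove. As in the one-dimensional case, from the bound on $\int_\O (v_\e-1)^2/\e$ we immediately get $v=1$ a.e., and from Proposition \ref{interpol-ineq} we obtain the crucial \emph{a priori} bound $\sup_\e \e\int_\O|\nabla v_\e|^2\dx<+\infty$, which reduces the Hessian penalization to (a lower bound involving) the first-order Ambrosio--Tortorelli penalization $\int_\O\big((v_\e-1)^2/\e+\e|\nabla v_\e|^2\big)\dx\le c$.

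\textbf{Liminf-inequality via slicing.}
For the lower bound I would use the integral-geometric slicing method. Fix a direction $\xi\in\mathbb{S}^{n-1}$; for $y$ in the hyperplane $\Pi^\xi$ orthogonal to $\xi$, write $\O^\xi_y$ for the one-dimensional slice and $u^\xi_y(t):=u(y+t\xi)$, $v^\xi_y(t):=v_\e(y+t\xi)$ for the restrictions. The key point is that, because $\F_\e$ contains the \emph{full} Hessian, one has the pointwise inequality $|\nabla^2 v_\e(y+t\xi)\,\xi\cdot\xi|^2\le |\nabla^2 v_\e(y+t\xi)|^2$, so that the second derivative $(v^\xi_{y})''$ of the slice is controlled by the Hessian; combined with the elementary $|\langle\nabla u_\e,\xi\rangle|^2\le|\nabla u_\e|^2$, Fubini gives
\begin{equation*}
\F_\e(u_\e,v_\e,\O)\ \ge\ \int_{\Pi^\xi}\F^\xi_{\e}\big((u_\e)^\xi_y,(v_\e)^\xi_y,\O^\xi_y\big)\,d\mathcal{H}^{n-1}(y),
\end{equation*}
where $\F^\xi_\e$ denotes the one-dimensional localized functional of \eqref{energy-1Dloc} (with the obvious relabelling of the constants, noting $\a$ multiplies $(u')^2$ and $\frac{\b}{2\sqrt 2}$ the Modica--Mortola part). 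By Fatou and the one-dimensional Theorem \ref{gconv-1D} applied on almost every slice, $\liminf_\e \F_\e(u_\e,v_\e)\ge \a\int_\O|\langle\nabla u,\xi\rangle|^2\dx+\b\int_{\Pi^\xi}\#\big(S_{u^\xi_y}\big)\,d\mathcal{H}^{n-1}(y)$; here one must first guarantee that $u^\xi_y\in SBV^2(\O^\xi_y)$ for a.e.\ $y$, which again follows from the truncation argument (working with $u^M_\e$, using that the energy is decreased by truncation since $v_\e\ge 0$, and passing $M\to\infty$), and this slicing criterion in turn shows $u\in GSBV^2(\O)$. Using the standard characterization of $SBV^2$ via slices and the integral-geometric (Besicovitch--Federer) formula for $\mathcal{H}^{n-1}(S_u)$, one has $\int_{\Pi^\xi}\#(S_{u^\xi_y})\,d\mathcal{H}^{n-1}(y)=\int_{S_u}|\langle\nu_u,\xi\rangle|\,d\mathcal{H}^{n-1}$. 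Finally one localizes the functional, views $\mu_\e:=\frac{\b}{2\sqrt2}\big(\tfrac{(v_\e-1)^2}{\e}+\e^3|\nabla^2 v_\e|^2\big)\mathcal{L}^n+\a v_\e^2|\nabla u_\e|^2\mathcal{L}^n$ as a bounded sequence of Radon measures, and supremizes the slicing bound over a countable dense set of directions $\xi$ together with the superadditivity of $\liminf$ on disjoint open sets (the usual De Giorgi--Letta / localization argument), obtaining $\liminf_\e\F_\e(u_\e,v_\e)\ge \a\int_\O|\nabla u|^2\dx+\b\,\mathcal{H}^{n-1}(S_u)=\MS(u,v)$.

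\textbf{Limsup-inequality.}
For the upper bound I would first reduce, by the usual density argument in $\MS$ (e.g.\ Cortesani--Toader type density of functions that are smooth off a polyhedral jump set, or simply the density of such ``nice'' competitors in energy), to the case where $u$ is smooth in a neighbourhood of a sufficiently regular closed jump set $\Gamma=S_u$ with $\mathcal{H}^{n-1}(\Gamma\setminus\Gamma)=0$ and $u\in W^{1,2}(\O\setminus\Gamma)$. Then one builds the recovery pair exactly as in the one-dimensional construction (Step 2 of the proof of Theorem \ref{gconv-1D}), done in the normal direction to $\Gamma$: using the optimal profile $f_\eta$ of \eqref{est4}, set $v_\e$ to be $0$ on a tube of width $\xi_\e$ around $\Gamma$, equal to $f_\eta(\mathrm{dist}(x,\Gamma)/\e)$ on a layer of width $\e T$, and $1$ outside (with $\xi_\e/\e\to 0$); and take $u_\e=u(1-\varphi_\e)$ where $\varphi_\e$ is a cut-off that is $1$ on the inner tube. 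Using the coarea formula for the distance function (so that $|\nabla^2 v_\e|$ reduces on the layer, up to the bounded curvature of $\Gamma$ which contributes a lower-order $O(\e)$ term, to the second derivative of the profile), one computes $\limsup_\e\F_\e(u_\e,v_\e)\le \a\int_\O|\nabla u|^2\dx+\b\,\mathcal{H}^{n-1}(\Gamma)+c\,\eta$, and concludes by letting $\eta\to 0$ and invoking lower semicontinuity of the $\Gamma$-limsup together with the density argument. A routine diagonal/truncation argument removes the boundedness assumption on $u$.

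\textbf{Main obstacle.}
I expect the delicate point to be the liminf-inequality, specifically the passage from the slice-wise one-dimensional estimate to the surface integral $\b\,\mathcal{H}^{n-1}(S_u)$: one needs the \emph{a priori} regularity $u\in GSBV^2(\O)$ (hence that the slices lie in $SBV^2$ for a.e.\ $y$) \emph{before} the one-dimensional $\Gamma$-liminf can be used, and then the supremum over directions must be upgraded to the full perimeter via the localization method for measures — this is where the full Hessian (as opposed to the Laplacian) is essential, since it is exactly what makes $(v^\xi_y)''$ controllable slice-by-slice in every direction $\xi$ simultaneously.
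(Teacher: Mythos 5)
Your proposal is correct and follows essentially the same route as the paper: slicing with the pointwise bounds $|\langle\nabla^2 v_\e\,\xi,\xi\rangle|^2\le|\nabla^2 v_\e|^2$ and $|\langle\nabla u_\e,\xi\rangle|^2\le|\nabla u_\e|^2$, the one-dimensional theorem on almost every slice, truncation plus the slicing characterization of $SBV^2$ to get $u\in GSBV^2(\O)$, and the supremum over a dense set of directions via superadditivity for the liminf; then Cortesani-type density and an optimal-profile tube construction for the limsup. The only point where the paper is more careful than your sketch is the recovery sequence near the relative boundary of each (flat, convex) piece of the jump set: rather than using $f_\eta(\mathrm{dist}(x,\Gamma)/\e)$ directly (whose Hessian is problematic where the distance function to a set with boundary is not smooth), the paper takes the distance to the full hyperplane and multiplies the profile by a tangential cut-off $\gamma_\e$ with $\|\nabla\gamma_\e\|_\infty\le c/\e$, $\|\nabla^2\gamma_\e\|_\infty\le c/\e^2$, and checks that this transition region contributes $O(\mathcal{H}^{n-1}(K^{2\e}\setminus K^\e))\to 0$.
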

\begin{proof}
As for the one-dimensional case, if $A$ is an open, $A\subset\O$, we set
\begin{equation*}
\F_\e(u,v,A):=
\begin{cases}
\ds \a\int_A v^2|\nabla u|^2\dx+\frac{\b}{2\sqrt{2}}\int_A\left(\frac{(v-1)^2}{\e}+{\e}^3|\nabla^2 v|^2 \right)\dx & \text{if}\; (u,v)\in W^{1,2}(A)\times W^{2,2}(A)\\
&\text{and}\; v\nabla u\in L^2(A;\R^n),
\cr\cr
+\infty & \text{otherwise},
\end{cases}
\end{equation*}
and
\begin{equation*}
\MS(u,v,A):=
\begin{cases}
\ds \a\int_A |\nabla u|^2\dx +\b\,\mathcal{H}^{n-1}(S_u\cap A) & \text{if}\; u\in GSBV^2(A)\;\text{and}\;v=1\;\text{a.e.} ,\\\\
+\infty & \text{otherwise}.
\end{cases}
\end{equation*}
We divide the proof into two steps in which we analyze separately the liminf-inequality and the limsup-inequality. 

\textit{Step 1: liminf-inequality}.
Let $A$ be an open subset of $\O$. We recover the lower bound
\begin{equation}\label{liminf-nD}
\G\hbox{-}\liminf_{\e\to 0} \F_\e(u,1,A)\geq\MS(u,1,A)
\end{equation}
from the one-dimensional case, by using the slicing method. To this end we start recalling some useful notation. For each $\xi\in\mathbb{S}^{n-1}$ we consider the hyperplane through the origin and orthogonal to $\xi$,
\begin{equation*}
\Pi^\xi:=\{x\in\R^n\colon \langle x,\xi\rangle=0\},
\end{equation*}
and, for every $y\in\Pi^\xi$, we consider the one-dimensional set
\begin{equation*}
A_{\xi,y}:=\{t\in\R\colon y+t\,\xi\in A\}
\end{equation*}
and the one-dimensional functions $u_{\xi,y}$, $v_{\xi,y}$ defined by
\begin{equation*}
u_{\xi,y}(t):=u(y+t\,\xi),\qquad v_{\xi,y}(t):=v(y+t\,\xi).
\end{equation*}
By Fubini's Theorem we have
\begin{eqnarray*}
&\;& \F_\e(u,v,A)\\
& = & \int_{\Pi^\xi}\int_{A_{\xi,y}}\bigg(\a\,v^2(y+t\xi)|\nabla u(y+t\xi)|^2\\
& & \qquad\qquad\quad+\frac{\b}{2\sqrt{2}}\,\left(\frac{(v(y+t\xi)-1)^2}{\e}+\e^3|\nabla^2 v(y+t\xi)|^2\right)\bigg)\dt\dH\\
& \geq & \int_{\Pi^\xi}\int_{A_{\xi,y}}\bigg(\a\,v^2(y+t\xi)|\langle\nabla u(y+t\xi),\xi\rangle|^2\\
& & \qquad\qquad\quad+\frac{\b}{2\sqrt{2}}\,\left(\frac{(v(y+t\xi)-1)^2}{\e}+\e^3|\langle\nabla^2 v(y+t\xi)\xi,\xi\rangle|^2\right)\bigg)\dt\dH\\
& = & \int_{\Pi^\xi}\F_\e^{\xi,y}(u_{\xi,y},v_{\xi,y},A_{\xi,y})\dH,
\end{eqnarray*}
where for all $\xi$ and $y$, $\F_\e^{\xi,y}$ are the one-dimensional functionals defined as in (\ref{energy-1Dloc}).

Let $(u,v)\in L^1(\O)\times L^1(\O)$ and let $(u_\e,v_\e)\subset L^1(\O)\times L^1(\O)$ be such that $(u_\e,v_\e) \rightarrow (u, v)$ in $L^1(\O)\times L^1(\O)$, and 
\begin{equation}\label{bound-nD}
\liminf_{\e\to 0}\F_\e(u,v,A)<+\infty,
\end{equation} 
then, $v=1$ a.e. in $A$. By Fubini's Theorem and Fatou's Lemma, 
\begin{eqnarray*}
0 & = & \lim_{\e\to 0}\int_A|u_\e-u|\dx\\
& = & \lim_{\e\to 0}\int_{\Pi^\xi}\int_{A_{\xi,y}}|(u_\e)_{\xi,y}-u_{\xi,y}|\dt\dH\\
& \geq & \int_{\Pi^\xi}\liminf_{\e\to 0}\|(u_\e)_{\xi,y}-u_{\xi,y}\|_{L^1(A_{\xi,y})}\dH;
\end{eqnarray*}
hence $(u_\e)_{\xi,y}\to u_{\xi,y}$ in $L^1(A_{\xi,y})$ for $\mathcal{H}^{n-1}\hbox{-}$a.e. $y\in\Pi^{\xi}$ and analogously $(v_\e)_{\xi,y}\to 1$ in $L^1(A_{\xi,y})$ for $\mathcal{H}^{n-1}\hbox{-}$a.e. $y\in\Pi^{\xi}$. Therefore, appealing to the one-dimensional result Theorem \ref{gconv-1D} we have that $u_{\xi,y}\in SBV^2(A_{\xi,y})$, for $\mathcal{H}^{n-1}\hbox{-}$a.e. $y\in\Pi^{\xi}$ and
\begin{eqnarray}\label{slicing}
\liminf_{\e\to 0}\F_\e(u_\e,v_\e,A) & \geq & \liminf_{\e\to 0}\int_{\Pi^\xi}\F^{\xi,y}_\e((u_\e)_{\xi,y},(v_\e)_{\xi,y},A_{\xi,y})\dH\nonumber\\
& \geq & \int_{\Pi^\xi}\liminf_{\e\to 0}\F^{\xi,y}_\e((u_\e){\xi,y},(v_\e)_{\xi,y},A_{\xi,y})\dH\nonumber\\
& \geq & \int_{\Pi^\xi}\left(\int_{A_{\xi,y}}\a\,|u'_{\xi,y}|^2\dt+\b\,\#(S_{u_{\xi,y}}\cap A_{\xi,y})\right)\dH.
\end{eqnarray}
Let $M\in \N$ and consider the truncated functions $u_M:=(-M)\vee (u\land M)$; by (\ref{bound-nD}) and (\ref{slicing}) we have
\begin{eqnarray*}
&\qquad& \int_{\Pi^\xi}|D(u_M)_{\xi,y}|(A_{\xi,y})\dH \\
\qquad & = & \int_{\Pi^\xi}\left(\int_{A_{\xi,y}}|(u_M)'_{\xi,y}|\dt+|(u_M)^+_{\xi,y}-(u_M)^-_{\xi,y}|\#(S_{(u_M)_{\xi,y}})\right)\dH\\
& \leq & \int_{\Pi^\xi}\left(c\,\left(\int_{A_{\xi,y}}|(u_M)'_{\xi,y}|^2\dt\right)^{1/2}+2M\,\#(S_{(u_T)_{\xi,y}})\right)\dH<+\infty.
\end{eqnarray*}
Thus, by virtue of \cite[Theorem 4.1(b)]{B1} we conclude that $u_M\in SBV^2(A)$ for every $M\in \N$, hence the lower semicontinuity of the Mumford-Shah functional entails $u\in GSBV^2(A)$. Moreover, by (\ref{slicing}) and \cite[Theorem 4.1(a)]{B1}, we infer
\begin{equation}\label{lim-xi}
\G\hbox{-}\liminf_{\e\to 0}\F_\e(u,v,A) \geq \a\int_A |\langle\nabla u, \xi\rangle|^2\dx+\b\int_{A\cap S_{u}}|\langle \xi,\nu_{u}\rangle|\,d\mathcal{H}^{n-1}.
\end{equation}
In particular, since (\ref{lim-xi}) holds for each $\xi\in\mathbb{S}^{n-1}$, we have
\begin{equation*}
\G\hbox{-}\liminf_{\e\to 0}\F_\e(u,v,A) \geq \sup_{\xi\in\mathbb{S}^{n-1}}\left\{\a\int_A |\langle\nabla u, \xi\rangle|^2\dx+\b\int_{A\cap S_{u}}|\langle \xi,\nu_{u}\rangle|\,d\mathcal{H}^{n-1}\right\}.
\end{equation*}
The previous estimate can be improved to (\ref{liminf-nD}) by means of a measure theory lemma. Observe that for $u\in GSBV^2(A)$ and $v=1$ the set function $\mu(A):=\G\hbox{-}\liminf_{\e\to 0}\F_\e(u,v,A)$ is superadditive on disjoint open sets. Then, applying \cite[Lemma 15.2]{B} with
\begin{equation*}
\lambda:=\mathcal{L}^n+\mathcal{H}^{n-1}\llcorner S_u,
\end{equation*}
and
\begin{equation*}
\psi_i(x):=
\begin{cases}
\a\,|\langle\nabla u,\xi_i\rangle|^2 & \text{if}\; x\notin S_u ,\\
\b\,|\langle\xi_i,\nu_u\rangle| & \text{if}\; x\in S_u,
\end{cases}
\end{equation*}
where $(\xi_i)$ is a dense sequence in $\mathbb{S}^{n-1}$, we obtain that
\begin{eqnarray*}
\G\hbox{-}\liminf_{\e\to 0}\F_\e(u,v,A)  &\geq&  \int_A\sup_i \psi_i\,d\lambda\\
&=& \a\int_A |\nabla u|^2\dx+\b\,\mathcal{H}^{n-1}(S_u\cap A)\\
&=&\MS(u,v,A).
\end{eqnarray*}
Finally the liminf-inequality follows taking $A=\O$.

\medskip

\textit{Step 2: limsup-inequality}. We want to show that 
\begin{equation}\label{limsup-nD}
\G\hbox{-}\limsup_{\e\to 0}\F_\e (u,v)\leq \MS(u,v),
\end{equation}
whenever $u\in GSBV^2(\O)$ and $v=1$ a.e.\ in $\O$.
To this end it is useful to recall the density result \cite[Theorem $3.9$ and Corollary $3.11$]{C}. Let $u\in GSBV^2(\O)$ then there exists a sequence $(u_j)\subset SBV^2(\O)$ satisfying the following properties:
\begin{enumerate}
\item\label{ess-clos} $S_{u_j}$ is essentially closed\ie $\mathcal{H}^{n-1}(\O\cap(\overline{S_{u_j}}\setminus S_{u_j}))=0$;
\item $\overline{S_{u_j}}$ is the intersection of $\O$ with a finite number of pairwise disjoint closed and convex sets, each contained in a $(n-1)\hbox{-}$dimensional hyperplane, and whose (relative) boundaries are $C^\infty$;
\item\label{reg} $u_j\in W^{k,\infty}(\O\setminus\overline{S_{u_j}})$ for every $k\in\mathbb{N}$;
\item\label{conv-u} $\|u_j-u\|_{L^2(\O)}\rightarrow 0$ as $j\rightarrow+\infty$;
\item\label{conv-grad} $\|\nabla u_j-\nabla u\|_{L^{2}(\O)}\rightarrow 0$ as $j\rightarrow+\infty$;
\item\label{conv-hausd} $|\mathcal{H}^{n-1}(S_{u_j})-\mathcal{H}^{n-1}(S_u)|\rightarrow 0$ as $j\rightarrow+\infty$.
\end{enumerate}
Denote by $\mathcal{W}(\O)$ the class of all functions for which conditions (\ref{ess-clos})-(\ref{reg}) hold. Then, by a standard density  argument it is enough to prove \eqref{limsup-nD} when $u$ belongs to $\mathcal{W}(\O)$. Indeed assume (\ref{limsup-nD}) holds true in $\mathcal{W}(\O)$. If $u\in GSBV^2(\O)$ then there exists a sequence $(u_j)\subset\mathcal{W}(\O)$ satisfying (\ref{conv-u})-(\ref{conv-hausd}). Hence it holds
\begin{equation*}
\G\hbox{-}\limsup_{\e\to 0}\F_\e(u,v)\leq \liminf_{j\to+\infty}\left(\G\hbox{-}\limsup_{\e\to 0}\F_\e(u_j,v)\right)\leq\lim_{j\to+\infty}\MS(u_j,v)=\MS(u,v),
\end{equation*}
where the first inequality is due to the lower semicontinuity of the $\G\hbox{-}$limsup, whereas the last equality follows from (\ref{conv-grad}) and (\ref{conv-hausd}).

We now prove (\ref{limsup-nD}) for a function $u\in \mathcal{W}(\O)$. Assume first that $\overline{S_u}=\O\cap K$ where $K$ is a closed convex set contained in an $(n-1)\hbox{-}$dimensional hyperplane $\Pi$ with normal $\nu$. Let $p\colon\R^n\to\Pi$ be the orthogonal projection on $\Pi$, $d(x):=\text{dist}(x,\Pi)$, and for any $\d>0$ set $K^\d:=\{x\in\Pi\colon \text{dist} (x,K)\leq\d\}$.

Let $\eta>0$; then there exist a function $f_\eta\in W^{2,2}_{\text{loc}}(0,+\infty)$ and a constant $M_\eta >0$ such that $f_\eta(0)=f_\eta'(0)=0$, $f_\eta(t)=1$ $\forall t>M_\eta$ and
\begin{equation}\label{est6}
\int_0^{M_\eta} \left((f_\eta-1)^2+(f_\eta'')^2\right)\dt\leq\sqrt{2}+\eta.
\end{equation}
Fix $T>M_\eta$ and let $\xi_\e>0$ be such that $\xi_\e/\e\to 0$ as $\e\to 0$; set
\begin{align*}
A_\e & :=\{x\in\R^n\colon p(x)\in K^\e,\;d(x)\leq\xi_\e\}
\\
B_\e & :=\{x\in\R^n\colon p(x)\in K^{2\e},\;d(x)\leq\xi_\e+\e T\}
\end{align*}
(see Figure \ref{figure2}). 
\begin{figure}[!h]
\centering
\psfrag{ex1}[cr][cr]{$\xi_\e+\e T$}
\psfrag{ex2}[cr][cr]{$\xi_\e$}
\psfrag{ex3}[cr][cr]{$-\xi_\e$}
\psfrag{ex4}[cr][cr]{$-\xi_\e-\e T$}
\psfrag{ex5}[cr][cr]{$\e$}
\psfrag{ex6}[cr][cr]{$\e$}
\psfrag{ex7}[c][c]{$S_u$}
\psfrag{ex8}[c][c]{$A_\e$}
\psfrag{ex9}[c][c]{$B_\e$}
\psfrag{ex10}[c][c]{$\Pi$}
\psfrag{ex11}[cr][cr]{$\nu$}
\includegraphics[scale=0.3]{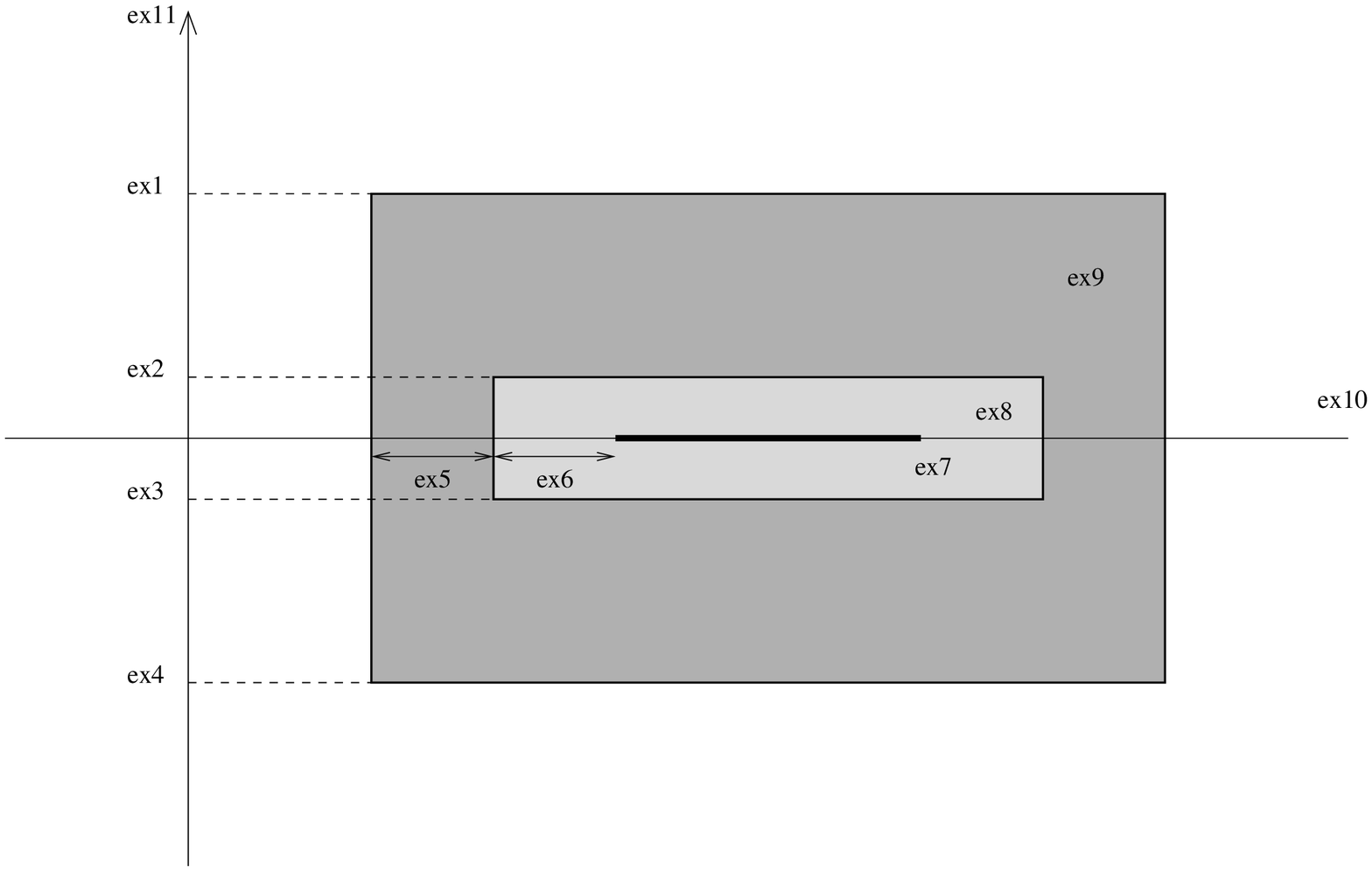}
\caption{The sets $A_\e$ and $B_\e$.}
\label{figure2}
\end{figure}
Consider moreover 
\begin{equation*}
C_\e :=\{x\in\R^n\colon p(x)\in K^{\e/2},\;d(x)\leq\xi_\e/2\}
\end{equation*}
and let $\varphi_\e$ be a cut-off function between $C_\e$ and $A_\e$\ie $\varphi_\e\in C_c^\infty(A_\e)$, $0\leq\varphi_\e\leq 1$ and $\varphi_\e\equiv 1$ on $C_\e$. Define the sequence
\begin{equation*}
u_\e(x):=u(x)(1-\varphi_\e(x));
\end{equation*}
then $(u_\e)\subset W^{1,2}(\O)$ and $u_\e\to u$ in $L^1(\O)$, by the Lebesgue dominated convergence Theorem.

Let $\gamma_\e$ be a cut-off function between $K^\e$ and $K^{2\e}$\ie $\gamma_\e\in C_c^\infty(K^{2\e})$, $0\leq\gamma_\e\leq 1$, $\gamma_\e\equiv 1$ on $K^\e$ with
\begin{equation}\label{bound-grad}
\|\nabla\gamma_\e\|_{L^\infty(\Pi)}\leq \frac{c}{\e},
\end{equation}
\begin{equation}\label{bound-hes}
\|\nabla^2\gamma_\e\|_{L^\infty(\Pi)}\leq\frac{c}{\e^2},
\end{equation}
for some $c>0$. Let $h_\e\colon\R\to\R$ be the function defined by
\begin{equation*}
h_\e(t):=
\begin{cases}
0 & \text{if }\; |t|\leq\xi_\e ,\\
f_\eta\left(\ds\frac{|t|-\xi_\e}{\e}\right) & \text{if }\; \xi_\e\leq|t|\leq\xi_\e+\e T ,\\
1 & \text{if }\; |t|\geq\xi_\e+\e T,
\end{cases}
\end{equation*}
where $f_\eta$ is as in (\ref{est6}). Let
\begin{equation}\label{v_eps}
v_\e(x):=\gamma_\e(p(x))h_\e(d(x))+(1-\gamma_\e(p(x)));
\end{equation}
by construction $(v_\e)\subset W^{2,2}(\O)\cap L^\infty(\O)$ so that $v_\e \nabla u_\e \in L^2(\O;\R^n)$; moreover $v_\e\rightarrow 1$ in $L^1(\O)$.

We have
\begin{align}\nonumber
\limsup_{\e\to 0}\F_\e(u_\e,v_\e) & = \limsup_{\e\to 0}\int_\O \left(\a\,v_\e^2|\nabla u_\e|^2+\frac{\b}{2\sqrt{2}}\,\left(\frac{(v_\e-1)^2}{\e}+{\e}^3|\nabla^2 v_\e|^2\right) \right)\dx\\\nonumber
 & \leq \a\,\limsup_{\e\to 0}\int_\O v_\e^2|\nabla u_\e|^2\dx + \frac{\b}{2\sqrt{2}}\,\limsup_{\e\to 0}\int_{A_\e} \left(\frac{(v_\e-1)^2}{\e}+{\e}^3|\nabla^2 v_\e|^2 \right)\dx \\\nonumber
 & \quad+ \frac{\b}{2\sqrt{2}}\,\limsup_{\e\to 0}\int_{B_\e\setminus A_\e} \left(\frac{(v_\e-1)^2}{\e}+{\e}^3|\nabla^2 v_\e|^2 \right)\dx \\ \nonumber
 & \quad+\frac{\b}{2\sqrt{2}}\,\limsup_{\e\to 0}\int_{\O\setminus B_\e} \left(\frac{(v_\e-1)^2}{\e}+{\e}^3|\nabla^2 v_\e|^2 \right)\dx\\[2pt]\label{ls1}
 &\quad =: \a I_1+\frac{\b}{2\sqrt{2}}(I_2+I_3+I_4).
\end{align}
We now compute separately the four terms $I_1, I_2, I_3$, and $I_4$. 

Since $v_\e\equiv 0$ on $A_\e$ and $u_\e\equiv u$ on $\O\setminus A_\e$, we immediately deduce
\begin{align}\label{ls2}
I_1 = \limsup_{\e\to 0}\int_{\O\setminus A_\e}v_\e^2|\nabla u|^2\dx =\int_\O|\nabla u|^2\dx;
\end{align}
moreover the term $I_2$ gives no contribution to the computation, indeed 
\begin{align}\nonumber
I_2 & = \limsup_{\e\to 0}\frac{1}{\e}\int_{A_\e} \dx \\\nonumber
& =\limsup_{\e\to 0}\frac{1}{\e}\int_{-\xi_\e}^{\xi_\e} dt\int_{K^\e} d\mathcal{H}^{n-1}(y)\\\label{ls3}
& = 2\limsup_{\e\to 0}\frac{\xi_\e}{\e}\,\mathcal{H}^{n-1}(K^\e)=0,
\end{align}
where we have used the fact that $\xi_\e\ll\e$ and $\mathcal{H}^{n-1}(K^\e)\rightarrow\mathcal{H}^{n-1}(K)<+\infty$ as $\e\rightarrow 0$.

Since $v_\e\equiv 1$ on $\O\setminus B_\e$, we also have $I_4=0$. Then it only remains to compute $I_3$. To this end it is convenient to decompose $B_\e\setminus A_\e$ as the union of two sets, $D_\e$ and $E_\e$, defined as follows
\begin{align*}
D_\e & :=\{x\in\R^n\colon p(x)\in K^\e,\;\xi_\e\leq d(x) \leq\xi_\e+\e T\}\\
E_\e & :=\{x\in\R^n\colon p(x)\in K^{2\e}\setminus K^\e,\;d(x)\leq\xi_\e+\e T\}
\end{align*}
(see Figure \ref{figure3}).

\begin{figure}[!h]
\centering
\psfrag{ex1}[cr][cr]{$\xi_\e+\e T$}
\psfrag{ex2}[cr][cr]{$\xi_\e$}
\psfrag{ex3}[cr][cr]{$-\xi_\e$}
\psfrag{ex4}[cr][cr]{$-\xi_\e-\e T$}
\psfrag{ex5}[c][c]{$\e$}
\psfrag{ex6}[c][c]{$\e$}
\psfrag{ex7}[c][c]{$E_\e$}
\psfrag{ex8}[c][c]{$D_\e$}
\psfrag{ex9}[c][c]{$\Pi$}
\psfrag{ex10}[cr][cr]{$\nu$}
\includegraphics[scale=0.35]{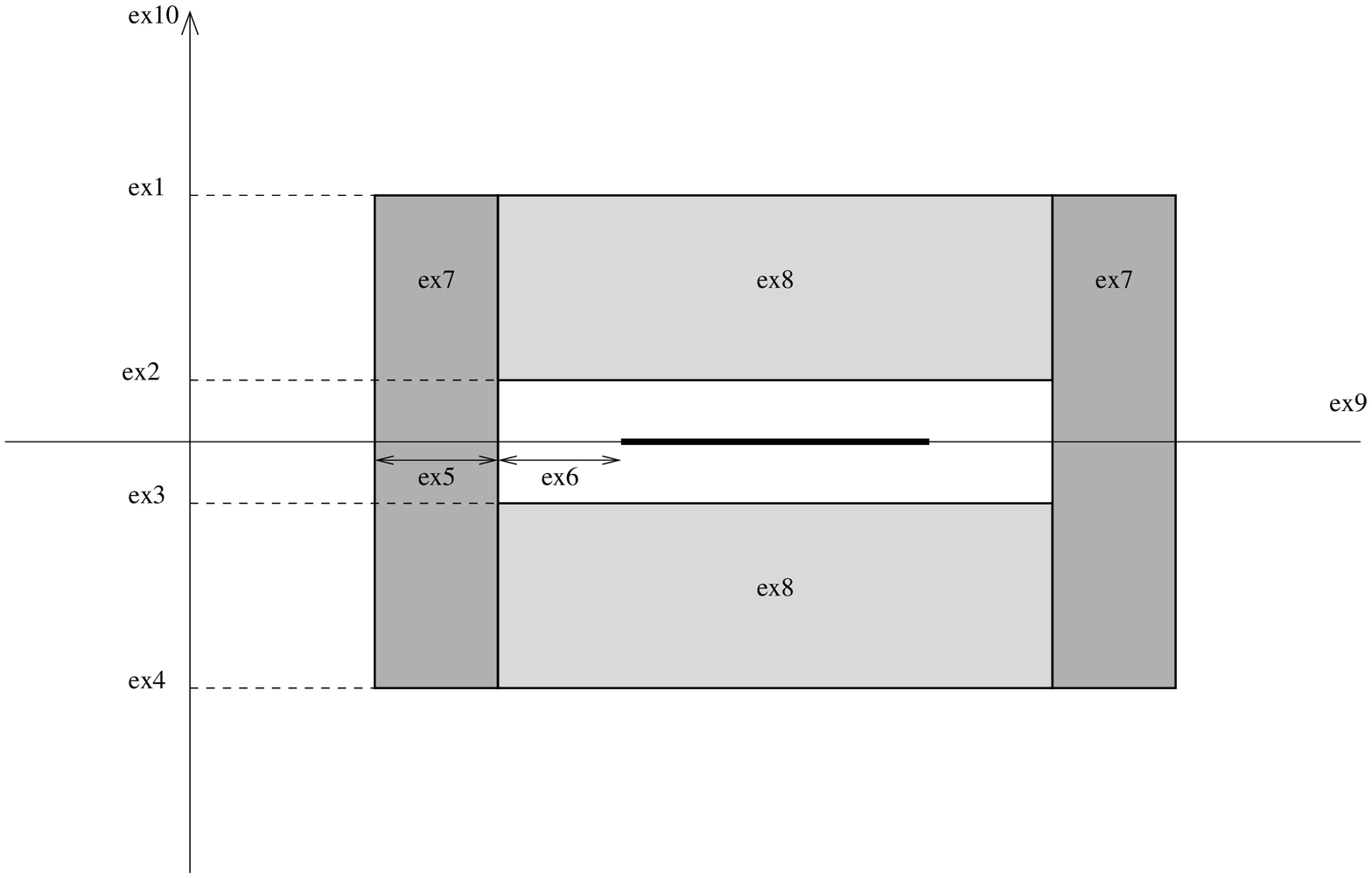}
\caption{The sets $D_\e$ and $E_\e$.}
\label{figure3}
\end{figure}
On $D_\e$ the function $v_\e= f_\eta\left(\frac{d(x)-\xi_\e}{\e}\right)$, moreover $\nabla d(x)=\pm \nu$ hence we have
\begin{eqnarray}\nonumber
&\quad &\ds\limsup_{\e\to 0}\int_{D_\e} \left(\frac{(v_\e-1)^2}{\e}+{\e}^3|\nabla^2 v_\e|^2 \right)\dx\\\nonumber
&=&\ds \limsup_{\e\to 0} \int_{D_\e} \left(\frac{1}{\e}\left(f_\eta\left(\frac{d(x)-\xi_\e}{\e}\right)-1\right)^2+\e^3\left|f_\eta''\left(\frac{d(x)-\xi_\e}{\e}\right)\frac{\nu^T\cdot\nu}{\e^2}\right|^2\right)\dx\\\nonumber
&\leq&\ds2\,\limsup_{\e\to 0}\int_{K^\e}\int_{\xi_\e}^{\xi_\e+\e T}\left(\frac{1}{\e}\left(f_\eta\left(\frac{t-\xi_\e}{\e}\right)-1\right)^2+ \frac{1}{\e}\left(f_\eta''\left(\frac{t-\xi_\e}{\e}\right)\right)^2\right)\dt\dH\\\nonumber
&=&\ds 2\, \limsup_{\e\to 0}\int_{K^\e}\int_0^T((f_\eta(t)-1)^2+(f_\eta''(t))^2)\dt\dH\\\label{ls4}
&\leq &\ds (2\sqrt{2}+\tilde{\eta})\,\limsup_{\e\to 0}\mathcal{H}^{n-1}(K^\e)\, =\,(2\sqrt{2}+\tilde{\eta})\,\mathcal{H}^{n-1}(K).
\end{eqnarray}
Hence finally to achieve the limsup-inequality we have to show that
\begin{equation}\label{ls5}
\limsup_{\e\to 0}\int_{E_\e} \left(\frac{(v_\e-1)^2}{\e}+{\e}^3|\nabla^2 v_\e|^2 \right)\dx=0.
\end{equation}
To do this we consider the further decomposition of $E_\e$ as the union of two sets $V_\e$ and $W_\e$ defined by
\begin{align*}
V_\e & :=\{x\in\R^n\colon p(x)\in K^{2\e}\setminus K^\e,\;d(x)\leq\xi_\e\}\\
W_\e & :=\{x\in\R^n\colon p(x)\in K^{2\e}\setminus K^\e,\;\xi_\e\leq d(x)\leq\xi_\e+\e T\}
\end{align*}
(see Figure \ref{figure4}).
\begin{figure}[!h]
\centering
\psfrag{ex1}[cr][cr]{$\xi_\e+\e T$}
\psfrag{ex2}[cr][cr]{$\xi_\e$}
\psfrag{ex3}[cr][cr]{$-\xi_\e$}
\psfrag{ex4}[cr][cr]{$-\xi_\e-\e T$}
\psfrag{ex5}[c][c]{$\e$}
\psfrag{ex6}[c][c]{$W_\e$}
\psfrag{ex7}[c][c]{$V_\e$}
\psfrag{ex8}[c][c]{$\Pi$}
\psfrag{ex9}[cr][cr]{$\nu$}
\includegraphics[scale=0.35]{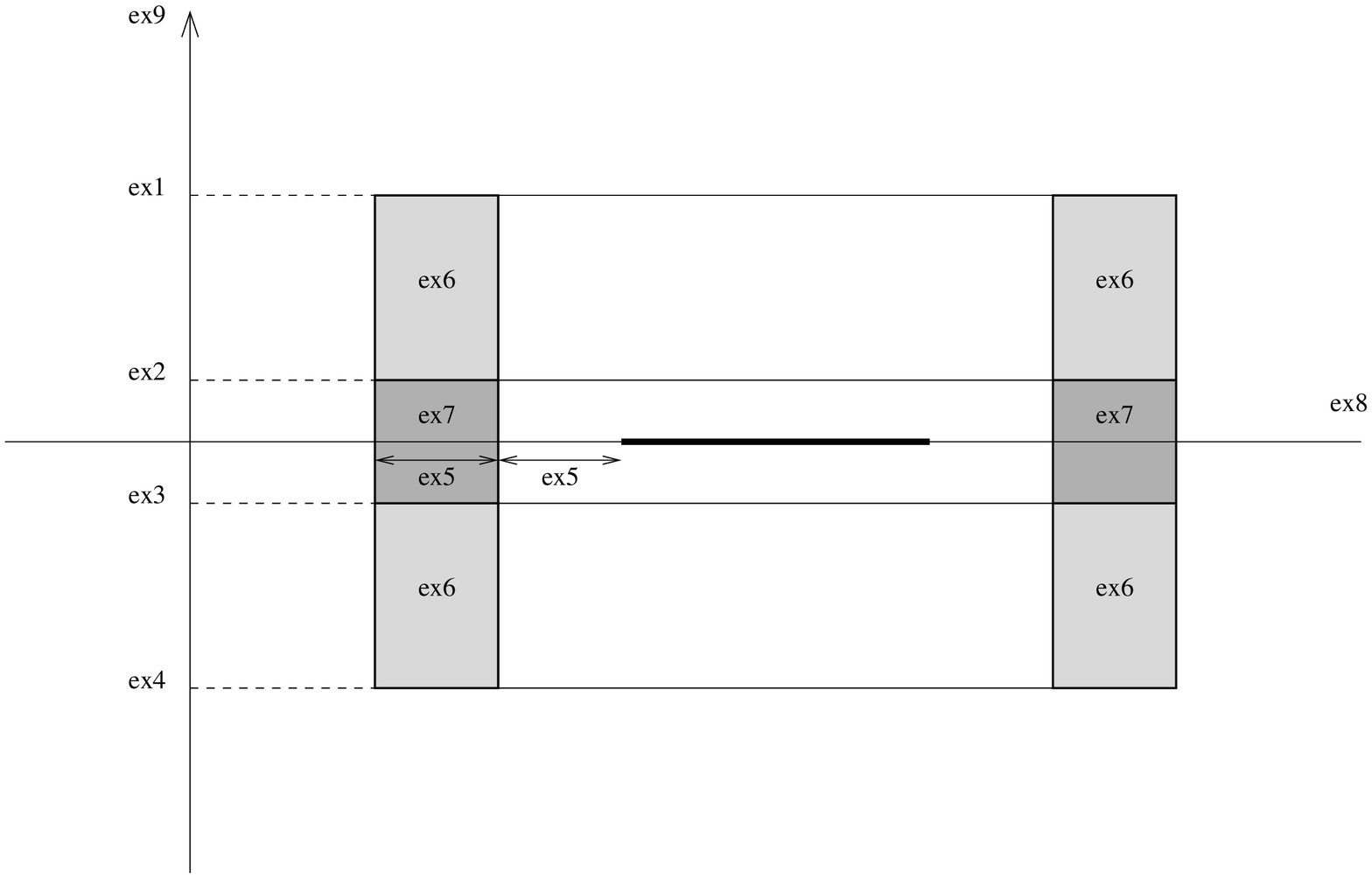}
\caption{The sets $V_\e$ and $W_\e$.}
\label{figure4}
\end{figure}
Since $h_\e(d(x))\equiv 0$ on $V_\e$, we have $v_\e(x)=1-\gamma_\e(p(x))$ for $x\in V_\e$. Then, if we denote by $D_p(x)$ the Jacobian matrix of $p$ evaluated at $x$, we get that $\|D_p(x)\|_{L^\infty(\R^n;\R^{n\times n})}\leq 1$ and
\begin{eqnarray*}
&\quad& \int_{V_\e}\left( \ds\frac{(v_\e-1)^2}{\e}+{\e}^3|\nabla^2 v_\e|^2 \right)\dx\\
\qquad\qquad\qquad & = & \int_{V_\e}\left(\frac{\gamma^2_\e(p(x))}{\e}+\e^3 \left|-D_p(x)^\text{T}\cdot \nabla^2\gamma_\e(p(x))\cdot D_p(x)\right|^2\right)\dx \\
& \leq & \int_{K^{2\e}\setminus K^\e}\int_{-\xi_\e}^{\xi_\e} \left(\frac{\gamma^2_\e(y)}{\e}+\e^3 \left|\nabla^2\gamma_\e(y)\right|^2\right)\dt\dH\\
& \leq & c\,\frac{\xi_\e}{\e}\,\mathcal{H}^{n-1}(K^{2\e}\setminus K^{\e}) \to 0 \quad \text{as}\; \e\to 0.
\end{eqnarray*}
where in the last inequality we have used (\ref{bound-hes}).

Also
\begin{equation*}
\limsup_{\e\to 0}\int_{W_\e} \left( \ds\frac{(v_\e-1)}{\e}+{\e}^3|\nabla^2 v_\e|^2 \right)\dx=0,
\end{equation*}
indeed $v_\e(x)=\gamma_\e(p(x))f_\eta\left(\frac{d(x)-\xi_\e}{\e}\right)+(1-\gamma_\e(p(x)))$ on $W_\e$ and we have
%
\begin{eqnarray*}
&\quad&\int_{W_\e}\left( \ds\frac{(v_\e-1)^2}{\e}+{\e}^3|\nabla^2 v_\e|^2 \right)\dx\\
& = & 2\,\int_{K^{2\e}\setminus K^\e}\int_{\xi_\e}^{\xi_\e +\e T}\bigg(\frac{1}{\e}\left(\gamma_\e(y)f_\eta\left(\frac{
t-\xi_\e}{\e}\right)-\gamma_\e(y)\right)^2\\
&& + \e^3\bigg|D_p(y+t\nu)^\text{T}\cdot \nabla^2\gamma_\e(y)\cdot D_p(y+t\nu)\,\left(f_\eta\left(\frac{t-\xi_\e}{\e}\right)-1\right)\\
&& +\frac{1}{\e}\,f'_\eta\left(\frac{t-\xi_\e}{\e}\right)\,\big((\nabla\gamma_\e(y)\cdot D_p(y+t\nu))^\text{T}\cdot \nabla d(y+t\nu) +\nabla d(y+t\nu)^T\cdot\nabla\gamma_\e(y)\cdot D_p(y+t\nu)\big)\\
&& +\gamma_\e(y)\,f''_\eta\left(\frac{t-\xi_\e}{\e}\right)\frac{\nu^T\cdot\nu}{\e^2}\bigg|^2\bigg)\dt\dH\\
& \leq & \frac{c}{\e}\,\e\,\mathcal{H}^{n-1}(K^{2\e}\setminus K^\e)=c\,\mathcal{H}^{n-1}(K^{2\e}\setminus K^\e) \to 0 \quad \text{as}\; \e \to 0,
\end{eqnarray*}
where to establish the last inequality we have used (\ref{bound-grad}) and (\ref{bound-hes}). 

Thus gathering \eqref{ls1}-\eqref{ls5} we finally deduce
\begin{equation*}
\limsup_{\e\to 0}\F_\e(u_\e,v_\e)\leq \a\int_\O |\nabla u|^2\dx+(\b+\tilde\eta)\,\mathcal{H}^{n-1}(S_u)\qquad\forall\tilde\eta>0,
\end{equation*}
and hence the limsup inequality.

We now consider the general case in which $\overline{S_u}=\O\cap\ds\sqcup_{i=1}^r K_i$, with $K_i$ closed and convex set contained in an $(n-1)\hbox{-}$dimensional hyperplane $\Pi^i$, with normal $\nu_i$. Let $p_i:\R^n\to\Pi^i$ be the orthogonal projection on $\Pi^i$, $d_i(x):=\text{dist}(x,\Pi^i)$, and for every $\d>0$ set $K_i^\d:=\{x\in\Pi^i\colon \text{dist}(x,K_i)\leq\d\}$.

Let $f_\eta$ be as in (\ref{est6}) and fix $T>M_\eta$; for $0<\xi_\e \ll\e$ consider the sets
\begin{align*}
A_\e^i & :=\{x\in\R^n\colon p_i(x)\in K^\e_i,\;d_i(x)\leq\xi_\e\}\\
B_\e^i & :=\{x\in\R^n\colon p_i(x)\in K^{2\e}_i,\;d_i(x)\leq\xi_\e+\e T\}.
\end{align*}
Arguing as above we can construct a recovery sequence $(u_\e^i,v_\e^i)$ for $\F_\e(u,v,K_i)$. Then, let $\e_0>0$ be such that
\begin{equation}\label{cond-e}
T<\frac{\d_0-2\,\xi_\e}{2\e},\qquad\forall\, 0<\e<\e_0,
\end{equation}
where $\d_0:=\min\{\text{dist}(K_i,K_j)\colon i,j=1,\dots,r,\,i\neq j\}$; for $\e\in(0,\e_0)$ set
\begin{equation*}
\hat A_\e:=\bigcup_{i=1}^r A_\e^i,\quad \hat B_\e:=\bigcup_{i=1}^r B_\e^i
\end{equation*}
and consider the sequences $(u_\e)\subset W^{1,2}(\O)$ and $(v_\e)\subset W^{2,2}(\O)\cap L^\infty(\O)$ defined by
\begin{equation}\label{def-u}
u_\e(x):= 
\begin{cases}
u_\e^i & \text{if }\; x\in A_\e^i,\cr
u & \text{if }\; x\notin \hat A_\e,
\end{cases} 
\end{equation}
and
\begin{equation}\label{def-v}
v_\e(x):= 
\begin{cases}
v_\e^i & \text{if }\; x\in B_\e^i,\cr
1 & \text{if }\; x\notin \hat B_\e.
\end{cases} 
\end{equation}
Condition (\ref{cond-e}) ensures that $B_\e^i\cap B_\e^j=\emptyset$ for all $i\neq j$ and consequently that the sequences in (\ref{def-u}) and (\ref{def-v}) are well-defined. Moreover we have $(u_\e, v_\e)\to (u,1)$ in $L^1(\O)\times L^1(\O)$ and
\begin{eqnarray*}
&\quad&\limsup_{\e\to 0}\F_\e(u_\e,v_\e)\\
\qquad & \leq & \a\,\limsup_{\e\to 0}\int_{\O} v_\e^2|\nabla u_\e|^2\dx+\frac{\b}{2\sqrt{2}}\,\bigg(\limsup_{\e\to 0}\int_{\hat A_\e}\left(\frac{(v_\e-1)^2}{\e}+{\e}^3|\nabla^2 v_\e|^2 \right)\dx\\
& & +\limsup_{\e\to 0}\int_{\hat B_\e \setminus \hat A_\e}\left(\frac{(v_\e-1)^2}{\e}+{\e}^3|\nabla^2 v_\e|^2 \right)\dx+\limsup_{\e\to 0}\int_{\O\setminus \hat B_\e}\left(\frac{(v_\e-1)}{\e}+{\e}^3|\nabla^2 v_\e|^2 \right)\dx\bigg)\\
& \leq & \a\int_\O |\nabla u|^2\dx+ \frac{\b}{2\sqrt{2}}\,\sum_{i=1}^r \bigg(\limsup_{\e\to 0}\int_{A_\e^i}\left(\frac{(v_\e-1)^2}{\e}+{\e}^3|\nabla^2 v_\e|^2 \right)\dx\\
& & +\limsup_{\e\to 0}\int_{B_\e^i\setminus A_\e^i}\left(\frac{(v_\e-1)^2}{\e}+{\e}^3|\nabla^2 v_\e|^2 \right)\dx+\limsup_{\e\to 0}\int_{\O\setminus B_\e^i}\left(\frac{(v_\e-1)^2}{\e}+{\e}^3|\nabla^2 v_\e|^2 \right)\dx\bigg)\\
& = & \a\int_\O |\nabla u|^2\dx+\b\,\sum_{i=1}^r\mathcal{H}^{n-1}(K_i)\\
& = & \a\int_\O |\nabla u|^2\dx+\b\,\mathcal{H}^{n-1}(S_u)=\MS(u,1).
\end{eqnarray*}
\end{proof}

\begin{remark} \label{tildeFepsilon}
{\rm Let $f\in W^{2,2}_0(\O)$ then it is immediate to show that
 $$
 \|\Delta f\|_{L^2(\O)}=\|\nabla^2 f\|_{L^2(\O;\R^{n\times n})}.
 $$
Therefore, if $u\in W^{1,2}(\O)$, $(v-1)\in W^{2,2}_0(\O)$, and $v \nabla u \in L^2(\O;\R^n)$ the functionals $\F_\e$ can be rewritten as
\begin{equation*}
\F_\e(u,v)=\a\int_\O v^2|\nabla u|^2\dx+\frac{\b}{2\sqrt{2}}\int_\O\left(\frac{(v-1)^2}{\e}+{\e}^3|\Delta v|^2 \right)\dx.
\end{equation*}}
\end{remark}
\noindent Let $\tilde{\F}_\e \colon L^1(\O)\times L^1(\O) \longrightarrow [0,+\infty]$ denote the functionals 
\begin{equation*} 
\tilde{\F}_\e(u,v):=
\begin{cases}
\ds\a\int_\O v^2|\nabla u|^2\dx+\frac{\b}{2\sqrt{2}}\int_\O\left(\frac{(v-1)^2}{\e}+{\e}^3|\Delta v|^2 \right)\dx & \text{if}\; (u,v-1)\in W^{1,2}(\O)\times W^{2,2}_0(\O)\\
& \text{and}\; v \nabla u \in L^2(\O;\R^n),
\cr\cr
+\infty & \text{otherwise},
\end{cases}
\end{equation*}
then the following theorem is an immediate consequence of Remark \ref{tildeFepsilon} and Theorem \ref{gconv-nD}.
\begin{theorem}
The sequence $(\tilde{\F}_\e)$ $\G\hbox{-}$converges, with respect to the $(L^1(\O)\times L^1(\O))$-topology, to the functional $\MS$, defined as in (\ref{Mu-Sh}).
\end{theorem}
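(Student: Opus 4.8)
\emph{Strategy.} The plan is to derive both $\G$-convergence inequalities for $(\tilde{\F}_\e)$ directly from Theorem \ref{gconv-nD} (and from Theorem \ref{gconv-1D} when $n=1$), using Remark \ref{tildeFepsilon} to compare $\tilde{\F}_\e$ with $\F_\e$. The elementary starting point is that $\tilde{\F}_\e\ge\F_\e$ on $L^1(\O)\times L^1(\O)$, with equality on the smaller class $\{(u,v)\colon (u,v-1)\in W^{1,2}(\O)\times W^{2,2}_0(\O),\ v\nabla u\in L^2(\O;\R^n)\}$: indeed, whenever $\tilde{\F}_\e(u,v)<+\infty$ one applies the identity $\|\Delta(v-1)\|_{L^2(\O)}=\|\nabla^2(v-1)\|_{L^2(\O)}$ from Remark \ref{tildeFepsilon} to get $\tilde{\F}_\e(u,v)=\F_\e(u,v)$, while the bound is trivial when $\tilde{\F}_\e(u,v)=+\infty$.

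\emph{The two inequalities.} The $\G$-liminf inequality is then immediate: for any $(u_\e,v_\e)\to(u,v)$ in $L^1(\O)\times L^1(\O)$ one has $\liminf_{\e\to0}\tilde{\F}_\e(u_\e,v_\e)\ge\liminf_{\e\to0}\F_\e(u_\e,v_\e)\ge\MS(u,v)$ by the lower bound of Theorem \ref{gconv-nD}, so nothing further is needed here. For the $\G$-limsup inequality the pointwise bound is useless (it points the wrong way), so one must produce, for $u\in GSBV^2(\O)$ and $v=1$, a recovery sequence that is \emph{admissible} for $\tilde{\F}_\e$, i.e.\ with $v_\e-1\in W^{2,2}_0(\O)$. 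I would reuse the construction from the proof of Theorem \ref{gconv-nD} after two reductions: first, by lower semicontinuity of the $\G$-limsup together with the density result of \cite{C} recalled there, one reduces to $u\in\mathcal{W}(\O)$; second, by a further approximation pushing the jump set into the interior (possible since $\partial\O$ is Lipschitz), one may also assume $\overline{S_u}\subset\subset\O$, paying arbitrarily small errors in $\int_\O|\nabla u|^2\dx$ and in $\mathcal{H}^{n-1}(S_u)$. For such $u$, choosing in the construction the cut-off $\gamma_\e$ supported in a vanishing neighbourhood of $\O\cap K_i$ (rather than of the possibly larger $K_i$) forces the deviation $v_\e-1$ to be supported in a shrinking neighbourhood of $\overline{S_u}$, hence compactly contained in $\O$ for $\e$ small; thus $v_\e-1\in W^{2,2}_0(\O)$, Remark \ref{tildeFepsilon} gives $\tilde{\F}_\e(u_\e,v_\e)=\F_\e(u_\e,v_\e)$, and the estimate already established in Theorem \ref{gconv-nD} yields $\limsup_{\e\to0}\tilde{\F}_\e(u_\e,v_\e)\le\MS(u,1)$. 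Undoing the two reductions finishes the argument.

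\emph{Main obstacle.} The only genuinely non-routine point, and the one not literally contained in Theorem \ref{gconv-nD}, is the boundary condition $v_\e-1\in W^{2,2}_0(\O)$ for the recovery sequence: since $\MS$ imposes no constraint on $u$, the set $S_u$ may in principle touch $\partial\O$, whereas the recovery sequence must satisfy $v_\e\equiv1$ with vanishing normal derivative on $\partial\O$. I expect this to be the main difficulty; it is handled precisely by the interior-perturbation reduction and the localized choice of $\gamma_\e$ above, after which all remaining computations coincide verbatim with those in the proof of Theorem \ref{gconv-nD}.
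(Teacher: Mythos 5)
Your proof is correct and follows essentially the same route as the paper: the liminf inequality comes from the pointwise bound $\tilde{\F}_\e\ge\F_\e$ (via the identity in Remark \ref{tildeFepsilon}) combined with Theorem \ref{gconv-nD}, and the limsup inequality is obtained by reusing the recovery sequence of Theorem \ref{gconv-nD}. The only difference is that the paper simply asserts that the sequence $(v_\e)$ of \eqref{def-v} already satisfies $(v_\e-1)\in W^{2,2}_0(\O)$, whereas you add an interior-perturbation of $S_u$ and a localized choice of $\gamma_\e$ to guarantee this; that extra care is harmless and is in fact exactly what is needed in the case, glossed over by the paper, where the convex pieces $K_i$ of $\overline{S_u}$ reach $\partial\O$ and the support of $v_\e-1$ would otherwise touch the boundary.
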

\begin{proof}
Since every sequence $(v_\e)$ with equibounded energy strongly converges to $1$ in $L^2(\O)$, the liminf inequality is trivial. Hence, it only remains to prove the limsup inequality. To this end we notice that the recovery sequence exhibited in Theorem \ref{gconv-nD} is also a recovery sequence for the functional $\tilde{\F}_\e$. Indeed it is immediate to check that $(v_\e)$ defined as in (\ref{def-v}) (see also \eqref{v_eps}) satisfies $(v_\e-1)\in W^{2,2}_0(\O)$.
\end{proof}

\subsection{Convergence of minimization problems}\label{s:min-pb}
Let $\eta_\e>0$ be such that $\eta_\e/\e\to 0$ as $\e\to0$. Let $\gamma>0$ and $g\in L^2(\O)$ be given and for every $(u,v)\in W^{1,2}(\O)\times W^{2,2}(\O)$ consider the functionals  
\begin{equation}\label{pert-coer}
\F_\e(u,v)+\eta_\e\int_\O |\nabla u|^2\dx+\gamma\int_\O |u-g|^2\dx.
\end{equation}
The following result holds true. 

\begin{theorem}\label{eqcoerc}
For every fixed $\e>0$ there exists a minimizing pair $(\tilde u_\e,\tilde v_\e)$ for the problem
\begin{equation*}
M_\e:=\inf\left\{\F_\e(u,v)+\eta_\e\int_\O |\nabla u|^2\dx+\gamma\int_\O |u-g|^2\dx\colon (u,v)\in W^{1,2}(\O)\times W^{2,2}(\O)\right\}.
\end{equation*} 
Moreover, up to subsequences, $(\tilde u_\e,\tilde v_\e)\to (u,1)$ in $L^1(\O)\times L^1(\O)$ where $u$ is a solution to
\begin{equation}
{M}:=\min\left\{\MS(u,1)+\gamma\int_\O|u-g|^2\dx \colon u\in GSBV^2(\O)\right\};
\end{equation}
if $n=1$ then $u\in SBV^2(\O)$. Further, $M_\e \to M$ as $\e\to 0$.
\end{theorem}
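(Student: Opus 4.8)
\emph{Strategy and existence for fixed $\e$.}
The plan is to combine the $\G$-convergence of $(\F_\e)$ to $\MS$ (Theorem~\ref{gconv-1D} if $n=1$, Theorem~\ref{gconv-nD} if $n\ge 2$) with an equi-coercivity estimate for the perturbed functionals in~\eqref{pert-coer}, and then to invoke the fundamental theorem of $\G$-convergence; the two lower-order terms $\eta_\e\int_\O|\nabla u|^2\dx$ and $\gamma\int_\O|u-g|^2\dx$ serve precisely to restore the $L^1(\O)$-equi-coercivity that $(\F_\e)$ alone does not have. Existence of a minimizer for fixed $\e>0$ follows by the direct method: given a minimizing sequence $(u_k,v_k)$, the fidelity term bounds $(u_k)$ in $L^2(\O)$, the term $\eta_\e\int|\nabla u_k|^2$ bounds $(\nabla u_k)$ in $L^2(\O)$ (here $\eta_\e>0$ is fixed), the Modica--Mortola part bounds $(v_k-1)$ in $L^2(\O)$ and $\e^3\|\nabla^2 v_k\|_{L^2}^2$, so by Proposition~\ref{interpol-ineq} $(v_k)$ is bounded in $W^{2,2}(\O)$, while $\sqrt\a\,v_k\nabla u_k$ is bounded in $L^2(\O;\R^n)$. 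Passing to a subsequence, $u_k\wto u$ in $W^{1,2}(\O)$, $v_k\wto v$ in $W^{2,2}(\O)$ with $v_k\to v$ in $L^2(\O)$ and a.e.\ (compact embedding), and $v_k\nabla u_k\wto v\nabla u$ in $L^2(\O;\R^n)$ (testing against $C_c^\infty$ and using $v_k\to v$ in $L^2$, $\nabla u_k\wto\nabla u$ in $L^2$), so $(u,v)$ is admissible; weak $L^2$-lower semicontinuity of the quadratic Hessian and gradient terms and of $\int_\O|v\nabla u|^2\dx$, together with $L^2$-continuity of $\int_\O(v-1)^2\dx$ and $\int_\O|u-g|^2\dx$, shows that $(u,v)$ is a minimizer.

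\emph{Uniform bound and equi-coercivity.}
Testing the infimum defining $M_\e$ with the admissible pair $(0,1)$ gives $M_\e\le\gamma\int_\O|g|^2\dx=:C_0$ for every $\e>0$. Hence, for a minimizer $(\tilde u_\e,\tilde v_\e)$ of $M_\e$, each summand in~\eqref{pert-coer} is $\le C_0$; in particular $\|\tilde v_\e-1\|_{L^2(\O)}^2\le c\,\e\to 0$, so $\tilde v_\e\to 1$ in $L^1(\O)$, and $\|\tilde u_\e\|_{L^2(\O)}\le c$. Applying Proposition~\ref{interpol-ineq} to $\tilde v_\e-1$ yields $c_0\,\e\int_\O|\nabla\tilde v_\e|^2\dx\le\frac1\e\int_\O(\tilde v_\e-1)^2\dx+\e^3\int_\O|\nabla^2\tilde v_\e|^2\dx\le c\,C_0$, whence
\[
\mathcal{AT}_\e(\tilde u_\e,\tilde v_\e)+\eta_\e\int_\O|\nabla\tilde u_\e|^2\dx=\a\int_\O(\tilde v_\e^2+\eta_\e)|\nabla\tilde u_\e|^2\dx+\frac\b2\int_\O\Big(\frac{(\tilde v_\e-1)^2}{\e}+\e|\nabla\tilde v_\e|^2\Big)\dx\le c\,C_0 .
\]
This is exactly the classical Ambrosio--Tortorelli functional~\eqref{ATfunct}, so the standard compactness argument for it (see \cite{AT92}; see also \cite{B}) applies: truncating $\tilde u_\e$ at level $M$ keeps the energy bounded and yields $L^1$-precompactness of the truncations with $SBV^2$ limits, and the Chebyshev estimate $\int_{\{|\tilde u_\e|>M\}}|\tilde u_\e|\dx\le M^{-1}\|\tilde u_\e\|_{L^2(\O)}^2$ together with a diagonal argument in $M$ promotes this to $\tilde u_\e\to u$ in $L^1(\O)$ with $u\in GSBV^2(\O)$ (and $u\in SBV^2(\O)$ when $n=1$). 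Thus $(\tilde u_\e,\tilde v_\e)\to(u,1)$ in $L^1(\O)\times L^1(\O)$ along a subsequence.

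\emph{Passage to the limit.}
By the $\G$-liminf inequality of Theorem~\ref{gconv-1D}/\ref{gconv-nD}, $\liminf_\e\F_\e(\tilde u_\e,\tilde v_\e)\ge\MS(u,1)$; since $(\tilde u_\e)$ is bounded in $L^2(\O)$ and converges to $u$ in $L^1(\O)$ it converges weakly in $L^2(\O)$, so $\liminf_\e\gamma\int_\O|\tilde u_\e-g|^2\dx\ge\gamma\int_\O|u-g|^2\dx$ by weak lower semicontinuity; dropping the nonnegative term $\eta_\e\int|\nabla\tilde u_\e|^2$ gives $\liminf_\e M_\e\ge\MS(u,1)+\gamma\int_\O|u-g|^2\dx\ge M$. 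For the reverse inequality, fix a competitor $w\in GSBV^2(\O)$ with $w\in L^2(\O)$; by the $\G$-limsup inequality there is a recovery sequence $(w_\e,z_\e)\to(w,1)$ in $L^1\times L^1$ with $\limsup_\e\F_\e(w_\e,z_\e)\le\MS(w,1)$, and one checks on the explicit constructions in the proofs of Theorems~\ref{gconv-1D} and~\ref{gconv-nD} that $w_\e\to w$ also in $L^2(\O)$ (since $|w_\e|\le|w|$ and $w_\e=w$ off a vanishing neighbourhood of $S_w$, by dominated convergence), while $\eta_\e\int_\O|\nabla w_\e|^2\dx\le c\,\eta_\e/\xi_\e\to 0$ provided the construction scale is chosen with $\eta_\e\ll\xi_\e\ll\e$, which is possible exactly because $\eta_\e/\e\to 0$; here the reduction to $w\in\mathcal W(\O)\subset L^\infty(\O)$ via the density result \cite{C} makes these bounds immediate, and a further density step covers general $w\in GSBV^2(\O)\cap L^2(\O)$. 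Therefore $\limsup_\e M_\e\le\MS(w,1)+\gamma\int_\O|w-g|^2\dx$ for every such $w$, so $\limsup_\e M_\e\le M$. Combining the two inequalities gives $M_\e\to M$ and forces $\MS(u,1)+\gamma\int_\O|u-g|^2\dx=M$, i.e.\ the limit $u$ is a solution of the limit problem (and $u\in SBV^2(\O)$ if $n=1$).

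\emph{Main obstacle.}
The delicate point is the $L^1(\O)$-equi-coercivity in the second step: $(\F_\e)$ by itself is not equi-coercive — the derivative of $u$ is uncontrolled where $v$ is close to $0$ — and this is where both perturbations are essential, the term $\eta_\e\int|\nabla u|^2$ to run the Ambrosio--Tortorelli truncation argument and $\gamma\int|u-g|^2$ to prevent escape of mass; equally crucial is passing, via the Gagliardo--Nirenberg inequality of Proposition~\ref{interpol-ineq}, from the second-order penalization back to the first-order one so that the classical compactness statement can be quoted verbatim. The remaining steps are routine applications of the direct method and of the fundamental theorem of $\G$-convergence.
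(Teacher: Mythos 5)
Your proposal is correct and follows essentially the same route as the paper: existence for fixed $\e$ by the direct method in the weak $(W^{1,2}(\O)\times W^{2,2}(\O))$-topology with identification of the weak limit of $v_k\nabla u_k$, equi-coercivity by using Proposition \ref{interpol-ineq} to dominate the classical Ambrosio--Tortorelli energy and then quoting its compactness theorem from \cite{AT92}, and convergence of the minimum values via the $\G$-convergence of the perturbed functionals with an intermediate scale $\eta_\e\ll\xi_\e\ll\e$ (the paper takes $\xi_\e=\sqrt{\eta_\e\,\e}$). The only cosmetic difference is that you verify the liminf/limsup inequalities for $M_\e$ by hand (including the $L^2$-convergence of the recovery sequence needed for the fidelity term), where the paper appeals to stability of $\G$-convergence under continuous perturbations together with the fundamental property of $\G$-convergence.
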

\begin{proof}
For fixed $\e>0$ the perturbation term $\eta_\e\int_\O|\nabla u|^2$ makes the functionals in \eqref{pert-coer} coercive with respect to the weak $(W^{1,2}(\O)\times W^{2,2}(\O))$-topology. Indeed, let $(u_k,v_k)\subset W^{1,2}(\O)\times W^{2,2}(\O)$ be such that
$$
\F_\e(u_k,v_k)+\eta_\e\int_\O |\nabla u_k|^2\dx+\gamma\int_\O |u_k-g|^2\dx \to M_\e \quad \text{as }\; k\to +\infty.
$$
As a consequence we deduce 
$$
\|v_k \nabla u_k\|_{L^2(\O;\R^n)} \leq c, \quad \|u_k\|_{W^{1,2}(\O)} \leq c
$$
for some $c>0$ independent of $k$; moreover, by the interpolation inequality Proposition \ref{interpol-ineq} we also have
$\|v_k\|_{W^{2,2}(\O)} \leq c$. Then up to subsequences (not relabelled)
\begin{equation}\label{convergenze}
\begin{cases}
v_k\nabla u_k \wto w & \text{in }\; L^2(\O;\R^n),
\cr
u_k \wto u & \text{in }\; W^{1,2}(\O),
\cr
v_k \wto v & \text{in }\; W^{2,2}(\O) \quad(\Rightarrow \; v_k \to v \;\text{ in }\; W^{1,2}(\O)).
\end{cases}
\end{equation}
Therefore 
$$
v_k \nabla u_k \wto v \nabla u\quad  \text{in }\; L^1(\O;\R^n), 
$$ 
hence by the uniqueness of the weak limit $v \nabla u= w\in L^2(\O;\R^n)$.   
Then, the existence of a minimizing pair $(\tilde u_\e, \tilde v_\e)$ easily follows appealing to the weak lower semicontinuity of the $L^2$ norm and to the direct methods.  

The requirement that $\eta_\e/\e\to 0$ as $\e\to 0$ ensures that  
\begin{equation}\label{gc-pert}
\F_\e(u,v)+\eta_\e\int_\O |\nabla u|^2\dx+\gamma\int_\O |u-g|^2\dx\; \stackrel{\Gamma}{\longrightarrow}\; \MS(u,v) + \gamma\int_\O |u-g|^2\dx
\end{equation}
with respect to the strong $(L^1(\O)\times L^1(\O))$-topology. This can be easily seen arguing as in Theorems \ref{gconv-1D} and \ref{gconv-nD} (now taking $\xi_\e=\sqrt{\eta_\e\,\e}$) and recalling that $\Gamma$-convergence is stable under continuous perturbations.

Moreover, if $(u_\e,v_\e)\subset W^{1,2}(\O)\times W^{2,2}(\O)$ is any sequence such that
\begin{equation*}
\sup_{\e}\left(\F_\e(u_\e,v_\e)+\eta_\e\int_\O |\nabla u_\e|^2\dx+\int_\O |u_\e-g|^2\dx\right)<+\infty,
\end{equation*}
then the interpolation inequality Proposition \ref{interpol-ineq} implies  
\begin{equation*}
\sup_{\e}\left(\mathcal{AT}_\e(u_\e,v_\e)+\eta_\e\int_a^b (u_\e')^2\dt+\int_a^b |u_\e-g|^2\dt\right)<+\infty,
\end{equation*}
where $\mathcal{AT}_\e$ is as in \eqref{f:AT}. Thus \cite[Theorem 1.2]{AT92} immediately yields the equicorecivity of the functionals as in \eqref{pert-coer}. 
Finally, by virtue of  \eqref{gc-pert}, the convergence of the associated minimization problems is ensured by the fundamental property of $\Gamma$-convergence.
\end{proof}


\section{The second model} \label{secondmodel}
\noindent In this section we study the asymptotic behaviour of the energies $\E_\e$ defined in (\ref{energy-lap}). Specifically we prove that, up to imposing suitable boundary conditions on $v$, the $\G\hbox{-}$limit of $\E_\e$ is again given by \eqref{Mu-Sh}. 

\medskip

In what follows $\O$ will be an open bounded subset of $\R^n$ with $C^2$ boundary.

\medskip

Let $\E_\e$ be as in (\ref{energy-lap}); we have
\begin{equation}\label{bound2}
\E_\e(u,v)\leq\a\int_\O v^2|\nabla u|^2\dx+\frac{\b}{2\sqrt{2}}\int_\O\left(\frac{(v-1)^2}{\e}+2\,\e^3|\nabla^2 v|^2\right)\dx,
\end{equation}
for all $(u,v)\in W^{1,2}(\O)\times W^{2,2}(\O)$ such that $v\nabla u \in L^2(\O;\R^n)$.

\medskip

It is convenient to introduce the following notation. Let $\E', \E'' \colon L^1(\O)\times L^\infty(\O)\longrightarrow [0,+\infty]$ be the functionals defined as 
$$
\E'(\cdot, \cdot):=\G\hbox{-}\liminf_{\e\to 0} \E_{\e_j}(\cdot, \cdot),\qquad
\E''(\cdot, \cdot):=\G\hbox{-}\limsup_{\e\to 0} \E_{\e_j}(\cdot, \cdot);
$$
then, by virtue of Theorem \ref{gconv-nD}, we get
\begin{equation*}
\E'(u,v)\leq\E''(u,v)\leq\a\int_\O|\nabla u|^2\dx+2\,\b\,\mathcal{H}^{n-1}(S_u)
\end{equation*}
for all $u\in GSBV^2(\O)$ and for $v=1$ a.e. in $\O$, hence 
$$
GSBV^2(\O)\times\{v=1\;\text{a.e. in}\;\O\} \subset {\rm dom}\, \E'' \subset {\rm dom}\, \E'.
$$
We now apply the blow-up argument of Fonseca-M\"{u}ller \cite{FM} (see also \cite{BFLM}) to obtain the following lower bound inequality for the functionals $\E_\e$.
\begin{proposition}\label{lower_bd}
For every $u\in GSBV^2(\O)$, we have
\begin{equation*}
\G\hbox{-}\liminf_{\e\to 0}\E_\e(u,1)\geq \MS(u,1),
\end{equation*}
with $\MS$ defined as in (\ref{Mu-Sh}).
\end{proposition}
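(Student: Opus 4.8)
The plan is to establish the liminf inequality via the blow-up method of Fonseca and M\"uller. Fix a sequence $(u_\e,v_\e)\to(u,1)$ in $L^1(\O)\times L^1(\O)$ with $\liminf_\e\E_\e(u_\e,v_\e)<+\infty$; passing to a subsequence we may assume the liminf is a limit and that the measures
$$
\mu_\e:=\a\,v_\e^2|\nabla u_\e|^2\,\mathcal{L}^n+\frac{\b}{2\sqrt{2}}\left(\frac{(v_\e-1)^2}{\e}+\e^3|\Delta v_\e|^2\right)\mathcal{L}^n
$$
converge weakly$^*$ to some $\mu\in\mathcal{M}_b(\O)$. It suffices to show the two Radon--Nikod\'ym lower bounds
$$
\frac{d\mu}{d\mathcal{L}^n}(x_0)\geq\a\,|\nabla u(x_0)|^2\quad\text{for }\mathcal{L}^n\text{-a.e. }x_0,\qquad
\frac{d\mu}{d\mathcal{H}^{n-1}\llcorner S_u}(x_0)\geq\b\quad\text{for }\mathcal{H}^{n-1}\text{-a.e. }x_0\in S_u,
$$
since then $\mu(\O)\geq\a\int_\O|\nabla u|^2\dx+\b\,\mathcal{H}^{n-1}(S_u)$ and the decomposition of $\mu$ into mutually singular parts gives the claim. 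The bulk estimate is the easy one: at a Lebesgue point $x_0\notin S_u$ one blows up on small cubes, uses $v_\e\to1$ in $L^2$ together with the energy bound to see that $v_\e^2|\nabla u_\e|^2$ behaves like $|\nabla u_\e|^2$ (splitting where $v_\e$ is near $1$ versus the small bad set) and concludes by weak $L^2$ lower semicontinuity exactly as in the one-dimensional Step 1 of Theorem \ref{gconv-1D}. Before doing the surface estimate one should also record, via the Gagliardo--Nirenberg inequality (Proposition \ref{interpol-ineq}) and elliptic regularity (Proposition \ref{ellipt-reg}), that $\e\int_\O|\nabla v_\e|^2\dx\leq c$ and in fact $\e\int_\O|\nabla^2 v_\e|^2\dx$ is controlled after passing to the interior; this is what lets one trade the Laplacian term against the full Hessian term in the limit and is the point where the $C^2$ boundary assumption on $\O$ enters.

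The heart of the argument is the surface lower bound. Fix $x_0\in S_u$, a point of approximate differentiability with normal $\nu=\nu_u(x_0)$, where the density $\theta(x_0):=\lim_{\r\to0}\mu(Q_\r^\nu(x_0))/\r^{n-1}$ exists and is finite, and where the usual blow-up properties of $u$ at a jump point hold (the truncations $u^M$ converge, after rescaling, to a pure jump between two constants across the hyperplane $\Pi^\nu=\{\langle x-x_0,\nu\rangle=0\}$). Rescale: set $y=(x-x_0)/\r$, $u_{\e,\r}(y):=u_\e(x_0+\r y)$, $v_{\e,\r}(y):=v_\e(x_0+\r y)$, and choose a diagonal sequence $\r=\r_\e\to0$ slowly enough that $v_{\e,\r_\e}\to1$ in $L^1(Q^\nu)$, that $u_{\e,\r_\e}$ converges to the blown-up jump function, and that
$$
\theta(x_0)\geq\limsup_\e\frac{1}{\r_\e^{n-1}}\int_{Q_{\r_\e}^\nu(x_0)}\frac{\b}{2\sqrt{2}}\left(\frac{(v_\e-1)^2}{\e}+\e^3|\Delta v_\e|^2\right)\dx.
$$
One then wants to reduce this $n$-dimensional rescaled energy to the one-dimensional optimal-profile problem of Theorem \ref{t:min-pb}. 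Here the Laplacian, unlike the full Hessian, does not split nicely under one-dimensional slicing, so instead I would argue as follows: on the rescaled cube the function $v_{\e,\r_\e}$ must pass from values near $1$ (away from $\{y\cdot\nu=0\}$) down to small values near the hyperplane (because there $u_{\e,\r_\e}$ has very large gradient, forced by the finiteness of $\a\int v_\e^2|\nabla u_\e|^2$ localized to the strip). Using the interpolation inequality to get an $L^2$ bound on the rescaled gradient $\nabla v_{\e,\r_\e}$ in terms of the energy, one extracts a one-dimensional profile by averaging over the directions tangent to $\Pi^\nu$, or equivalently by Fubini and Fatou along the $\nu$-slices, obtaining for $\mathcal{H}^{n-1}$-a.e. transverse point a limiting profile $f\colon\R\to\R$ with $f\to1$ at $\pm\infty$ and $f$ attaining a value close to $0$, whose (rescaled) energy $\int((f-1)^2+(f'')^2)$ is bounded below by $2\textbf{m}=2\sqrt2$ by Theorem \ref{t:min-pb} applied on each half-line (the factor $2$ because the transition occurs on both sides of the hyperplane). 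Dividing by the $\b/(2\sqrt2)$ prefactor yields $\theta(x_0)\geq\b$, which is the desired surface density bound. The step where the Laplacian is ``upgraded'' to the Hessian in the limit — i.e. showing the symmetry breaking at $\e>0$ disappears — is done by noting that after the diagonal rescaling and passage to the limit the minimizing profiles are radial/one-dimensional, so $|\Delta v|=|\nabla^2 v|$ in the limit; this is exactly the elliptic-regularity-plus-interpolation observation mentioned in the introduction.

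The main obstacle I expect is precisely this last reduction: justifying rigorously that the Laplacian penalization, which at positive $\e$ genuinely differs from the Hessian penalization in dimension $n\geq2$ (one can have $|\Delta v|$ small while $|\nabla^2 v|$ is large), nonetheless produces in the blow-up limit the same one-dimensional cost $\sqrt2$. The clean way around this is not to try to compare $|\Delta v_\e|$ and $|\nabla^2 v_\e|$ pointwise, but to work directly with the Laplacian: after rescaling, use Proposition \ref{ellipt-reg}(i) to bound $\|v_{\e,\r_\e}\|_{W^{2,2}}$ on interior subcubes by $\|\Delta v_{\e,\r_\e}\|_{L^2}+\|v_{\e,\r_\e}\|_{L^2}$, so that a bound on the rescaled Laplacian energy gives a bound on the rescaled full Hessian energy on slightly smaller cubes, with a constant that does not blow up; then the one-dimensional slicing of the Hessian energy (as in the proof of Theorem \ref{gconv-nD}) goes through on these interior cubes, and a routine exhaustion argument removes the loss near the boundary of the cube. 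A secondary technical point is the careful choice of the diagonal sequence $\r_\e\to0$ so that all the required convergences (of $v$, of $u$, and of the localized energies to the density $\theta(x_0)$) hold simultaneously; this is handled by a standard diagonalization together with the fact that $\mathcal{H}^{n-1}$-a.e. point of $S_u$ is a point where all the relevant densities exist. Once the two Radon--Nikod\'ym bounds are in hand, the conclusion $\G\text{-}\liminf_\e\E_\e(u,1)\geq\MS(u,1)$ is immediate.
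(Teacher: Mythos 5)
Your overall architecture --- blow-up, weak$^*$ limit measure, Radon--Nikod\'ym decomposition, separate bulk and surface density estimates --- is exactly the paper's, and the bulk estimate (where any constant loss in passing from $\Delta$ to $\nabla^2$ is harmless, since only the term $\a v^2|\nabla u|^2$ survives that limit) is essentially fine. The gap is in the surface estimate, at precisely the point you single out as the main obstacle, and your proposed fix does not close it. Using Proposition \ref{ellipt-reg}(i) on the rescaled cube gives, after applying it to $v_h-1$ and using $\|v_h-1\|_{L^2(Q^\nu)}^2\leq C\sigma_h$,
\begin{equation*}
\sigma_h^3\int_{Q^\nu}|\Delta v_h|^2\dy\;\geq\;\frac{\sigma_h^3}{c(Q^\nu,Q')}\int_{Q'}|\nabla^2 v_h|^2\dy-O(\sigma_h^{4}),\qquad c(Q^\nu,Q')>1,
\end{equation*}
so the functional you can slice is not $\F_{\sigma_h}$ but a version with coefficient $1/c$ in front of the second-order term. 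The one-dimensional transition cost scales: substituting $t\mapsto\lambda t$ with $\lambda=(1/c)^{1/4}$ in the optimal profile problem shows
\begin{equation*}
\min\left\{\int_0^{+\infty}\Bigl((f-1)^2+\tfrac1c\,(f'')^2\Bigr)\dt\right\}=c^{-1/4}\,\sqrt{2}<\sqrt{2},
\end{equation*}
so your route yields only $\mu_J(x_0)\geq \b\,c(Q^\nu,Q')^{-1/4}<\b$. The ``routine exhaustion'' $Q'\nearrow Q^\nu$ makes this worse, not better, because the interior elliptic constant blows up as $\mathrm{dist}(Q',\partial Q^\nu)\to 0$, and you give no argument that the constant multiplying $\|\Delta v_h\|^2$ can be taken equal to $1$. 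Your alternative remark --- that in the limit the profiles are one-dimensional so $|\Delta v|=|\nabla^2 v|$ --- is circular: the blow-up limits of $v_h$ are identically $1$, and one-dimensionality of near-minimizers at $\sigma_h>0$ is what has to be proved.

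The missing idea (the paper follows \cite[Lemma 3.4]{HPS}) is to keep the Laplacian and show that the symmetry breaking costs nothing \emph{with constant one}: write $\Delta v=v_{zz}+\Delta_y v$ in coordinates adapted to $\nu$, insert a cut-off $\eta\in C_0^\infty$, expand $|v_{zz}+\Delta_yv|^2\eta^2=v_{zz}^2\eta^2+(|\Delta_yv|^2+2v_{zz}\Delta_yv)\eta^2$, and integrate the cross term by parts:
\begin{equation*}
\int v_{zz}\,\eta^2\,\Delta_y v\dy\dz=\int\bigl(|\nabla_y v_z|^2\eta^2+2v_z\eta\langle\nabla_yv_z,\nabla_y\eta\rangle\bigr)\dy\dz-2\int v_z\,\eta\,\eta_z\,\Delta_yv\dy\dz .
\end{equation*}
Young's inequality then gives $\int|\Delta v|^2\eta^2\geq\int v_{zz}^2\eta^2-c(\eta)\int v_z^2$, i.e.\ the full normal--normal second derivative is retained with coefficient exactly $1$, and the error $\sigma_h^3\int v_z^2$ is $O(\sigma_h^2)$ times the energy by Proposition \ref{interpol-ineq} and Proposition \ref{ellipt-reg}(i) --- here the non-sharp elliptic constant is harmless because it only multiplies a vanishing remainder. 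Only after this reduction does one slice along $\nu$ and invoke the one-dimensional Theorem \ref{gconv-1D} to get the sharp density $\b$. Without this step your argument proves the liminf inequality only with a strictly smaller surface coefficient.
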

\begin{proof}
Assume first that $u$  belongs to $SBV^2(\O)$. Let $(u_\e,v_\e)\subset L^1(\O)\times L^1(\O)$ be such that $(u_\e,v_\e)\rightarrow (u,1)$ in $L^1(\O)\times L^1(\O)$ and $\sup_\e\E_\e(u_\e,v_\e)<+\infty$. 

For each $\e>0$ consider the measures
\begin{equation*}
\mu_\e:=\left(\a\,v_\e^2|\nabla u_\e|^2+\frac{\b}{2\sqrt{2}}\,\left(\frac{(v_\e-1)^2}{\e}+\e^3|\Delta v_\e|^2\right)\right)\mathcal{L}^n\llcorner\O.
\end{equation*}
By hypothesis $\mu_\e(\O)=\E_\e(u_\e,v_\e)$ is equibounded therefore, up to subsequences (not relabelled), $\mu_\e\wto^*\mu$ where $\mu$ is a non-negative finite Radon measure on $\O$. Using the Radon-Nikod\'{y}m Theorem we decompose $\mu$ into the sum of three mutually orthogonal measures
\begin{equation*}
\mu=\mu_a\mathcal{L}^n+\mu_J\mathcal{H}^{n-1}\llcorner S_u+\mu_s
\end{equation*}
and we claim that
\begin{equation}\label{est_grad}
\mu_a(x_0)\geq\a\,|\nabla u(x_0)|^2\qquad \text{for }\; \mathcal{L}^n\hbox{-}\text{a.e.}\;x_0\in\O
\end{equation}
and
\begin{equation}\label{est_jump}
\mu_J(x_0)\geq \b\qquad\text{for }\;\mathcal{H}^{n-1}\hbox{-}\text{a.e.}\;x_0\in S_u.
\end{equation}
Suppose for a moment that \eqref{est_grad} and \eqref{est_jump} hold true, then to conclude it is enough to consider an increasing sequence of smooth cut-off functions $(\varphi_k)$, such that $0\leq\varphi_k\leq 1$ and $\sup_k\varphi_k(x)=1$ on $\O$, and to note that for every $k\in\N$
\begin{eqnarray*}
\lim_{\e\to 0}\E_\e(u_\e,v_\e) & \geq & \liminf_{\e\to 0} \int_\O \left(\a\,v_\e^2|\nabla u_\e|^2+\frac{\b}{2\sqrt{2}}\,\left(\frac{(v_\e-1)^2}{\e}+\e^3|\Delta  v_\e|^2\right)\right)\varphi_k\dx\\
& = & \int_\O \varphi_k\,d\mu\geq \int_\O \mu_a\varphi_k\dx+ \int_{S_u} \mu_J\varphi_k\,d\mathcal{H}^{n-1}\\
& \geq & \a\int_\O |\nabla u|^2\varphi_k\dx+\b\int_{S_u}\varphi_k\,d\mathcal{H}^{n-1}.
\end{eqnarray*}
Hence, letting $k\to+\infty$ the thesis follows from the monotone convergence Theorem.

We now prove (\ref{est_grad}) and \eqref{est_jump}. We start proving (\ref{est_grad}). To this end let $x_0\in\O$ be a Lebesgue point for $\mu$ with respect to $\mathcal{L}^n$ such that $u$ is approximately differentiable at $x_0$\ie
\begin{equation}\label{L_point}
\mu_a(x_0)=\lim_{\r\to 0}\frac{\mu(Q_\r(x_0))}{\mathcal{L}^n(Q_\r(x_0))}=\lim_{\r\to 0}\frac{\mu(Q_\r(x_0))}{\r^n}
\end{equation}
and
\begin{equation}\label{approx_point}
\lim_{\r\to 0}\frac{1}{\r^{n+1}}\int_{Q_\r(x_0)}|u(x)-u(x_0)-\langle\nabla u(x_0),x-x_0\rangle|\dx=0.
\end{equation}
The Besicovitch derivation Theorem together with the Calder\'{o}n-Zygmund Theorem ensures that (\ref{L_point})-(\ref{approx_point}) hold true for a.e. $x_0\in \O$.

Since $\mu$ is a finite Radon measure we have that $\mu(\partial Q_\r(x_0))=0$ for all $\r>0$ except for a countable set. Thus, being $\chi_{\overline{Q_\r(x_0)}}$ an upper semicontinuous function with compact support in $\O$ (for $\r$ small) \cite[Proposition 1.62(a)]{AFP} yields
\begin{eqnarray*}
\mu_a(x_0) & = & \ds\lim_{\r\to 0}\frac{1}{\r^n}\int_{\overline{Q_\r(x_0)}}d\mu\geq\lim_{\r\to 0}\limsup_{\e\to 0}\frac{1}{\r^n}\,\mu_\e(Q_\r(x_0))\\
&=&\lim_{\r\to 0}\limsup_{\e\to 0}\frac{1}{\r^n}\int_{Q_\r(x_0)}\left(\a\,v_\e^2(x)|\nabla u_\e(x)|^2+\frac{\b}{2\sqrt{2}}\,\left(\frac{(v_\e(x)-1)^2}{\e}+\e^3|\Delta v_\e(x)|^2\right)\right)\dx\\
&=&\lim_{\r\to 0}\limsup_{\e\to 0}\int_Q\bigg(\a\,v_\e^2(x_0+\r y)|\nabla u_\e(x_0+\r y)|^2\\
&&\qquad\qquad\qquad\qquad +\frac{\b}{2\sqrt{2}}\,\left(\frac{(v_\e(x_0+\r y)-1)^2}{\e}+\e^3|\Delta v_\e(x_0+\r y)|^2\right)\bigg)\dy.
\end{eqnarray*}
Now we suitably modify $u_\e$ to obtain a sequence converging in $L^1(Q)$ to the linear function $w_0(y)=\langle\nabla u(x_0),y\rangle$. Set
\begin{equation*}
w_{\e,\r}(y):=\frac{u_\e(x_0+\r y)-u(x_0)}{\r},\qquad v_{\e,\r}(y):=v_\e(x_0+\r y);
\end{equation*}
then, letting first $\e\to 0$ and then $\varrho\to 0$ we get $(w_{\e,\r},v_{\e,\r})\rightarrow (w_0,1)$ in $L^1(Q)\times L^1(Q)$. Moreover we have
\begin{eqnarray*}
\mu_a(x_0) & \geq & \lim_{\r\to 0}\limsup_{\e\to 0}\int_Q\left(\a\,v_{\e,\r}^2(y)|\nabla w_{\e,\r}(y)|^2+\frac{\b}{2\sqrt{2}}\,\left(\frac{(v_{\e,\r}(y)-1)^2}{\e}+\frac{\e^3}{\r^4}|\Delta v_{\e,\r}(y)|^2\right)\right)\dy\\
& \geq & \lim_{\r\to 0}\limsup_{\e\to 0}\E_\e(w_{\e,\r},v_{\e,\r},Q),
\end{eqnarray*}
where in the last inequality we have used that $1/\r>1$ for $\varrho$ small.

By a standard diagonalization argument we can find two positive vanishing sequences $(\e_h),(\r_h)$ such that $(w_{\e_h,\r_h},v_{\e_h,\r_h})\rightarrow(w_0,1)$ in $L^1(Q)\times L^1(Q)$ as $h\to +\infty$, and
\begin{equation*}
\mu_a(x_0)\geq \liminf_{h\to+\infty}\E_{\e_h}(w_{\e_h,\r_h},v_{\e_h,\r_h},Q).
\end{equation*}
Let $Q'\subset\subset Q$; then, gathering Proposition \ref{ellipt-reg}(i) and Theorem \ref{gconv-nD}, we get
\begin{eqnarray*}
\mu_a(x_0) & \geq & \liminf_{h\to+\infty}\int_{Q'}\left(\a\,v_{\e_h,\r_h}^2|\nabla w_{\e_h,\r_h}|^2+\frac{\b}{2\sqrt{2}}\,\left(\frac{(v_{\e_h,\r_h}-1)^2}{\e_h}+c(Q,Q')\e_h^3|\nabla ^2v_{\e_h,\r_h}|^2\right)\right)\dy\\
& \geq & \a\,|\nabla u(x_0)|^2\mathcal{L}^n(Q'),
\end{eqnarray*}
and (\ref{est_grad}) follows letting $Q'\nearrow Q$.

We now prove (\ref{est_jump}). Let $x_0\in S_u$ be a Lebesgue point for $\mu$ with respect to $\mathcal{H}^{n-1}\llcorner S_u$\ie
\begin{equation}\label{Lpj}
\mu_J(x_0)=\lim_{\r\to 0}\frac{\mu(Q^\nu_\r(x_0))}{\mathcal{H}^{n-1}(Q^\nu_\r(x_0)\cap S_u)}=\lim_{\r\to 0}\frac{\mu(Q^\nu_\r(x_0))}{\r^{n-1}},
\end{equation}
where $\nu:=\nu_u(x_0)$. The Besicovitch derivation Theorem ensures that \eqref{Lpj} holds true for $\mathcal{H}^{n-1}\hbox{-}$a.e. $x_0\in S_u$. By the definition of approximate discontinuity point, we can assume that in addition
\begin{equation}\label{app_jump}
\lim_{\r\to 0}\int_{(Q^\nu_\r(x_0))^\pm}|u(x)-u^\pm(x_0)|\dx=0,
\end{equation}
where $(Q^\nu_\r(x_0))^\pm:=\{x\in (Q^\nu_\r(x_0))\colon \pm\langle x-x_0,\nu\rangle > 0\}$.

By following the same argument as before, we get
\begin{eqnarray*}
\mu_J(x_0) & = & \lim_{\r\to 0}\frac{1}{\r^{n-1}}\int_{\overline{Q^\nu_\r(x_0)}}d\mu\geq\lim_{\r\to 0}\limsup_{\e\to 0}\frac{1}{\r^{n-1}}\,\mu_\e(Q^\nu_\r(x_0))\\
& = & \lim_{\r\to 0}\limsup_{\e\to 0}\frac{1}{\r^{n-1}}\int_{Q^\nu_\r(x_0)}\left(\a\,v_\e^2(x)|\nabla u_\e(x)|^2+\frac{\b}{2\sqrt{2}}\,\left(\frac{(v_\e(x)-1)^2}{\e}+\e^3|\Delta v_\e(x)|^2\right)\right)\dx\\
& = & \lim_{\r\to 0}\limsup_{\e\to 0}\r\int_{Q^\nu}\bigg(\a\,v_\e^2(x_0+\r y)|\nabla u_\e(x_0+\r y)|^2\\
& & \qquad\qquad\qquad\qquad+\frac{\b}{2\sqrt{2}}\,\left(\frac{(v_\e(x_0+\r y)-1)^2}{\e}+\e^3|\Delta v_\e(x_0+\r y)|^2\right)\bigg)\dy\\
& = & \lim_{\r\to 0}\limsup_{\e\to 0}\int_{Q^\nu}\bigg(\frac{\a}{\r}\,v_{\e,\r}^2(y)|\nabla u_{\e,\r}(y)|^2\\
&&\qquad\qquad\qquad\qquad+\frac{\b}{2\sqrt{2}}\,\bigg(\frac{\r}{\e}(v_{\e,\r}(y)-1)^2+\left(\frac{\e}{\r}\right)^3|\Delta v_{\e,\r}(y)|^2\bigg)\bigg)\dy\\
& \geq & \lim_{\r\to 0}\limsup_{\e\to 0} \E_{\e/\r}(u_{\e,\r},v_{\e,\r},Q^\nu),
\end{eqnarray*}
where we set $u_{\e,\r}(y):=u_\e(x_0+\r y)$, $v_{\e,\r}(y):=v_\e(x_0+\r y)$. Notice that in view of (\ref{app_jump}), letting first $\e\to 0$ and then $\r\to 0$, we get $(u_{\e,\r},v_{\e,\r})\rightarrow(u_0,1)$ in $L^1(Q^\nu)\times L^1(Q^\nu)$ where
\begin{equation*}
u_0(x):=\begin{cases}
u^+(x_0) & \text{if}\; \langle x-x_0,\nu\rangle\geq 0 ,\\
u^-(x_0) & \text{if}\; \langle x-x_0,\nu\rangle < 0.
\end{cases}
\end{equation*}
Then, a diagonalization argument provides us with two positive vanishing sequences $(\e_h),(\r_h)$ such that $\sigma_h:=\frac{\e_h}{\r_h}\to 0$ as $h\to +\infty$, $(u_{\e_h,\r_h},v_{\e_h,\r_h})\rightarrow(u_0,1)$ in $L^1(Q^\nu)\times L^1(Q^\nu)$ as $h\to +\infty$, and
\begin{equation}\label{l:diag}
\mu_J(x_0)\geq \lim_{h\to+\infty}\E_{\sigma_h}(u_{\e_h,\r_h},v_{\e_h,\r_h},Q^\nu).
\end{equation}
Set $u_h:=u_{\e_h,\r_h}$, $v_h:=v_{\e_h,\r_h}$. 

Since $\E_{\sigma_h}$ is invariant under translations in $u$ and under rotations in $u$ and $v$, it is enough to bound from below the right hand side in \eqref{l:diag} when $\nu=e_n$ and $u_h\to u_t:=t\chi_{\{x_n\geq 0\}}$, $t:=u^+(x_0)-u^-(x_0)$. 

To this end let $\d>0$ and $Q(\d):=(-1/2+\d,1/2-\d)^n$; then,
\begin{equation}\label{lf_lap-d}
\mu_J(x_0)\geq\lim_{h\to +\infty}\E_{\sigma_h}(u_h,v_h,Q)\geq\lim_{h\to+\infty}\E_{\sigma_h}(u_h,v_h,Q(\d)),
\end{equation}
for every $\d>0$ small. We now show that
\begin{equation}\label{liminf_lap}
\lim_{h\to +\infty}\E_{\sigma_h}(u_h,v_h,Q(\d))\geq \b.
\end{equation}
The proof now follows the line of that of \cite[Lemma 3.4]{HPS} where the asymptotic behaviour of a variant of the Modica-Mortola functional is investigated. The idea is to estimate from below the functionals $\E_{\sigma_h}$ in a way which allows us to reduce to the one-dimensional lower bound proved in Theorem \ref{gconv-1D}.

In order to not overburden notation we now drop the index $h$ for the sequences of functions as in \eqref{lf_lap-d}. We assume moreover that $v\in C^\infty(\overline{Q(\d)})$ and we write
\begin{equation*}
\Delta_xv=v_{zz}+\Delta_y v,\quad y\in Q'(\d),\,z\in(-1/2+\d,1/2-\d),
\end{equation*}
with $Q'(\d):=(-1/2+\d,1/2-\d)^{n-1}$. Then, we have 
\begin{equation}\label{est7}
\E_{\sigma_h}(u,v,Q(\d))=\int_{-\frac{1}{2}+\d}^{\frac{1}{2}-\d}\int_{Q'(\d)}\left(\a\,v^2\left(u_z^2+|\nabla_y u|^2\right)+\frac{\b}{2\sqrt{2}}\,\left(\frac{(v-1)^2}{\sigma_h}+\sigma_h^3\,|v_{zz}+\Delta_y v|^2\right)\right)\dy\dz.
\end{equation}
We are going to estimate the right-hand side of (\ref{est7}) from below with a functional that no longer contains partial derivatives with respect to $y$. Since the term involving $\nabla_y u$ is non-negative the only term that we have to estimate is the one containing $\Delta_y v$. For $\eta\in C_0^\infty(Q(\d))$, we write
\begin{eqnarray}\nonumber
&\quad &\int_{-\frac{1}{2}+\d}^{\frac{1}{2}-\d}\int_{Q'(\d)}|v_{zz}+\Delta_y v|^2\eta^2\dy\dz\\\label{est8}
&=&\int_{-\frac{1}{2}+\d}^{\frac{1}{2}-\d}\int_{Q'(\d)}v_{zz}^2\,\eta^2\dy\dz+\int_{-\frac{1}{2}+\d}^{\frac{1}{2}-\d}\int_{Q'(\d)}\left(|\Delta_y v|^2\,\eta^2+2\,v_{zz}\,\eta^2\,\Delta_yv\right)\dy\dz.
\end{eqnarray}
We first estimate the last term in (\ref{est8}) from below. We have
\begin{eqnarray*}
&\quad &\int_{-\frac{1}{2}+\d}^{\frac{1}{2}-\d}\left(\int_{Q'(\d)}v_{zz}\,\eta^2\,\Delta_yv\dy\right)\dz\\
&=&\ds-\int_{-\frac{1}{2}+\d}^{\frac{1}{2}-\d}\int_{Q'(\d)}v_z\,\eta^2\,\Delta_yv_z\dy\dz-2\,\int_{-\frac{1}{2}+\d}^{\frac{1}{2}-\d}\int_{Q'(\d)}v_z\,\eta\,\eta_z\,\Delta_yv\dy\dz\\
&=&\ds\int_{-\frac{1}{2}+\d}^{\frac{1}{2}-\d}\int_{Q'(\d)}\left(|\nabla_yv_z|^2\,\eta^2+2\,v_z\,\eta\,\langle\nabla_yv_z,\nabla_y\eta\rangle\right)\dy\dz-2\,\int_{-\frac{1}{2}+\d}^{\frac{1}{2}-\d}\int_{Q'(\d)}v_z\,\eta\,\eta_z\,\Delta_yv\dy\dz.
\end{eqnarray*}
By Young's inequality we get
\begin{equation*}
2\,\int_{-\frac{1}{2}+\d}^{\frac{1}{2}-\d}\int_{Q'(\d)}v_z\,\eta\,\langle\nabla_y v_z,\nabla_y\eta\rangle \dy\dz\leq\int_{-\frac{1}{2}+\d}^{\frac{1}{2}-\d}\int_{Q'(\d)}|\nabla_yv_z|^2\,\eta^2\dy\dz+\int_{-\frac{1}{2}+\d}^{\frac{1}{2}-\d}\int_{Q'(\d)}v_z^2\,|\nabla_y\eta|^2\dy\dz
\end{equation*}
and
\begin{equation*}
2\,\int_{-\frac{1}{2}+\d}^{\frac{1}{2}-\d}\int_{Q'(\d)}v_z\,\eta\,\eta_z\,\Delta_y v\dy\dz\leq\frac{1}{2}\int_{-\frac{1}{2}+\d}^{\frac{1}{2}-\d}\int_{Q'(\d)}|\Delta_y v|^2\,\eta^2\dy\dz+2\,\int_{-\frac{1}{2}+\d}^{\frac{1}{2}-\d}\int_{Q'(\d)}v_z^2\,\eta_z^2\dy\dz.
\end{equation*}
Using in (\ref{est8}) the two bounds as above we obtain 
\begin{equation}\label{est9}
\int_{-\frac{1}{2}+\d}^{\frac{1}{2}-\d}\int_{Q'(\d)}\left(|\Delta_yv|^2+2\,v_{zz}\,\Delta_yv\right)\eta^2\dy\dz\geq -\int_{-\frac{1}{2}+\d}^{\frac{1}{2}-\d}\int_{Q'(\d)}v_z^2\left(4\,\eta_z^2+2\,|\nabla_y\eta|^2\right)\dy\dz.
\end{equation}
Thus, by (\ref{est7})-(\ref{est9}), we get
\begin{equation*}
\sigma_h^3\int_{-\frac{1}{2}+\d}^{\frac{1}{2}-\d}\int_{Q'(\d)}|v_{zz}+\Delta_yv|^2\,\eta^2\dy\dz\geq \sigma_h^3\int_{-\frac{1}{2}+\d}^{\frac{1}{2}-\d}\int_{Q'(\d)}v_{zz}^2\eta^2\dy\dz-c(\eta)\sigma_h^3\int_{Q(\d)}v_z^2 \dx.
\end{equation*}
Hence, if we assume in addition that $\|\eta\|_\infty\leq 1$, we can conclude that
\begin{eqnarray*}
\E_{\sigma_h}(u,v,Q(\d)) & \geq & \int_{-\frac{1}{2}+\d}^{\frac{1}{2}-\d}\int_{Q'(\d)}\left(\a\,v^2|u_z|^2+\frac{\b}{2\sqrt{2}}\,\left(\frac{(v-1)^2}{\sigma_h}+\sigma_h^3\,v_{zz}^2\right)\right)\eta^2\dy\dz\\
&&-\frac{\b}{2\sqrt{2}}\,c(\eta)\sigma_h^3\int_{Q(\d)}v_z^2 \dx,
\end{eqnarray*}
which holds also true for $v\in W^{2,2}(\O)$, by virtue of the density of $C^\infty(Q)\cap W^{2,2}(\O)$ in $W^{2,2}(\O)$. 

Appealing to Proposition \ref{interpol-ineq} and Proposition \ref{ellipt-reg}(i) we have
\begin{eqnarray*}
\frac{\b}{2\sqrt{2}}\,\sigma_h^3\int_{Q(\d)}(v)_z^2\dx &\leq & c\,\sigma_h^2\frac{\b}{2\sqrt{2}}\,\left(\int_{Q(\d)}\frac{(v-1)^2}{\sigma_h}+\sigma_h^3|\nabla^2 v|^2\dx\right)\\
& \leq & \sigma_h^2\, c(Q,Q(\d))\,\E_{\sigma_h}(u,v,Q);
\end{eqnarray*}
therefore
\begin{eqnarray*}
&\quad& \lim_{h\to+\infty}\E_{\sigma_h}(u_h,v_h,Q(\d))
\\ 
& \geq & \liminf_{h\to+\infty}\int_{-\frac{1}{2}+\d}^{\frac{1}{2}-\d}\int_{Q'(\d)}\left(\a\,v_h^2(u_h)_z^2+\frac{\b}{2\sqrt{2}}\,\left(\frac{(v_h-1)^2}{\sigma_h}+\sigma_h^3(v_h)_{zz}^2\right)\right)\eta^2\dy\dz,
\end{eqnarray*}
for every $\eta \in C^\infty_0(Q(\d))$. Hence if we choose $\hat\d>\d$ and $\eta \in C^\infty_0(Q(\d))$ such that $\eta=1$ in $Q(\hat\d)\subset \subset Q(\d)$, invoking Fatou's Lemma and Theorem \ref{gconv-1D} we get
\begin{eqnarray*}
\mu_J(x_0) & \geq & \lim_{h\to+\infty}\E_{\sigma_h}(u_h,v_h,Q(\hat\d))\\
& \geq & \liminf_{h\to+\infty}\int_{Q'(\hat\d)}\int_{-\frac{1}{2}+\hat\d}^{\frac{1}{2}-\hat\d}\left(\a\,v_h^2(u_h)_z^2+\frac{\b}{2\sqrt{2}}\,\left(\frac{(v_h-1)^2}{\sigma_h}+\sigma_h^3(v_h)_{zz}^2\right)\right)\dz\dy\\
& \geq & \int_{Q'(\hat\d)}\liminf_{h\to +\infty}\int_{-\frac{1}{2}+\hat\d}^{\frac{1}{2}-\hat\d}\left(\a\,v_h^2(u_h)_z^2+\frac{\b}{2\sqrt{2}}\,\left(\frac{(v_h-1)^2}{\sigma_h}+\sigma_h^3(v_h)_{zz}^2\right)\right)\dz\dy\\
& \geq & \b\,\mathcal{H}^{n-1}(Q'(\hat\d)).
\end{eqnarray*}
Then, (\ref{est_jump}) follows by letting $\hat\d\to 0$.

If $u\in GSBV^2(\O)$ the thesis follows by a standard truncation argument. In fact if $u^M:=(u\land M)\vee(-M)$, then $u^M\in SBV^2(\O)$ for all $M\in \mathbb{N}$. Then, appealing to the lower-semicontinuity of $\MS$ and noticing that $\E_\e(\cdot,v)$ (and hence $\E'(\cdot,v)$) decreases by truncation, we immediately get
\begin{equation*}
\E'(u,1)\geq\liminf_{M\to +\infty}\MS(u^M,1)\geq \MS(u,1).
\end{equation*}
\end{proof}
The following result holds true.
\begin{proposition}
We have
\begin{equation}\label{gconv_GSBV}
\G\hbox{-}\lim_{\e\to 0}\E_\e(u,1)=\MS(u,1)
\end{equation}
for every $u\in GSBV^2(\O)$.
\end{proposition}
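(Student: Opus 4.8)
The plan is to prove \eqref{gconv_GSBV} by establishing the two $\Gamma$-inequalities separately. The lower bound $\G\hbox{-}\liminf_{\e\to 0}\E_\e(u,1)\geq\MS(u,1)$ is already contained in Proposition \ref{lower_bd}, so the only thing left is the upper bound $\G\hbox{-}\limsup_{\e\to 0}\E_\e(u,1)\leq\MS(u,1)$ for every $u\in GSBV^2(\O)$. First I would observe that, by the trivial pointwise inequality $|\Delta v|^2\leq n\,|\nabla^2 v|^2$ (or simply $|\Delta v|\le \sqrt n\,|\nabla^2 v|$), one gets $\E_\e(u,v)\le$ (a functional of the same type as $\F_\e$ but with constant $\sqrt n$ in front of the Hessian term). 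This is too lossy: it would produce $\sqrt n\,\b\,\mathcal H^{n-1}(S_u)$ rather than $\b\,\mathcal H^{n-1}(S_u)$. So instead the plan is to produce an \emph{explicit} recovery sequence for $\E_\e$ directly, reusing the one-dimensional optimal profile $f_\eta$ from \eqref{est6} and the geometric construction of Theorem \ref{gconv-nD}, but choosing the profile to depend only on the signed distance to the hyperplane $\Pi$ containing a flat piece of $\overline{S_u}$.

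The key point is that, in the construction \eqref{v_eps}, near the flat interface the function $v_\e$ is (up to the cut-off $\gamma_\e$, which lives on $\Pi$ and contributes only lower-order terms) of the form $v_\e(x)=f_\eta\big((d(x)-\xi_\e)/\e\big)$, where $d(x)=\langle x-x_0,\nu\rangle$ is affine. Hence $\nabla^2 v_\e = \e^{-2} f_\eta''\big((d-\xi_\e)/\e\big)\,\nu\otimes\nu$ on the core region $D_\e$, and therefore $\Delta v_\e = \e^{-2} f_\eta''\big((d-\xi_\e)/\e\big) = |\nabla^2 v_\e|$ there, since $\nu\otimes\nu$ is a rank-one projection with trace $1$. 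In other words, on the part of the construction that actually carries the surface energy, the Laplacian term and the full-Hessian term coincide exactly, so the same computation \eqref{ls4} gives
\[
\limsup_{\e\to0}\int_{D_\e}\Big(\tfrac{(v_\e-1)^2}{\e}+\e^3|\Delta v_\e|^2\Big)\dx \le (2\sqrt2+\tilde\eta)\,\mathcal H^{n-1}(K).
\]
The remaining regions $A_\e$, $V_\e$, $W_\e$, $\O\setminus B_\e$ are handled exactly as in Theorem \ref{gconv-nD}: the bounds \eqref{bound-grad}–\eqref{bound-hes} on $\gamma_\e$ and its derivatives, together with $\|\Delta\gamma_\e\|_\infty\le c/\e^2$ (which follows from $\|\nabla^2\gamma_\e\|_\infty\le c/\e^2$), give vanishing contributions because those regions have thickness $\xi_\e\ll\e$ or transverse width $\e$ but measure $\mathcal H^{n-1}(K^{2\e}\setminus K^\e)\to0$. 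Since $f_\eta$ equals $1$ outside $[0,M_\eta]$, the candidate $v_\e$ equals $1$ near $\partial\O$, so it lies in $W^{2,2}(\O)$ and satisfies $v_\e\nabla u_\e\in L^2(\O;\R^n)$ because $v_\e$ is bounded; also $u_\e\to u$, $v_\e\to1$ in $L^1(\O)$ by dominated convergence. This yields $\limsup_{\e\to0}\E_\e(u_\e,v_\e)\le \a\int_\O|\nabla u|^2\dx+(\b+\tilde\eta)\,\mathcal H^{n-1}(S_u)$ for every $\tilde\eta>0$, first for $u\in\mathcal W(\O)$ with a single flat component, then for finitely many components via the disjointness condition \eqref{cond-e}, and finally for general $u\in GSBV^2(\O)$ by the density argument \cite[Theorem 3.9 and Corollary 3.11]{C} exactly as in the proof of Theorem \ref{gconv-nD}, using lower semicontinuity of the $\Gamma$-limsup.

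I expect the main (minor) obstacle to be purely bookkeeping: verifying that on the "leakage" regions $V_\e$ and $W_\e$, where $v_\e$ genuinely depends on both $p(x)$ and $d(x)$, the Laplacian $\Delta v_\e$ is still controlled by the same quantities that controlled $|\nabla^2 v_\e|$ in Theorem \ref{gconv-nD} — but since $|\Delta v_\e|\le \sqrt n\,|\nabla^2 v_\e|$ pointwise, every estimate there transfers verbatim up to the harmless factor $\sqrt n$, and those terms vanish anyway. Hence no new analytical input is required beyond Theorem \ref{gconv-nD}, the identity $\Delta(\psi\circ d)=\psi''\circ d$ for affine $d$ with $|\nabla d|=1$, and Proposition \ref{lower_bd} for the matching lower bound. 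The delicate interface contribution is precisely the place where Hessian and Laplacian agree, which is why the same constant $\b$ (and not $\sqrt n\,\b$) appears in the limit.
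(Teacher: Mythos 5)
Your proposal is correct and follows exactly the paper's route: the lower bound is Proposition \ref{lower_bd}, and the upper bound is obtained by reusing the recovery sequence of Theorem \ref{gconv-nD}, which works because on the energy-carrying region $D_\e$ the Hessian of $v_\e$ is the rank-one matrix $\e^{-2}f_\eta''\,\nu\otimes\nu$, so $|\Delta v_\e|=|\nabla^2 v_\e|$ there, while the remaining regions contribute nothing in the limit. The paper states this in one line; your write-up merely makes the justification explicit.
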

\begin{proof}
The lower bound inequality is a consequence of Proposition \ref{lower_bd} while the upper bound can be proved by taking the same recovery sequence as in Theorem \ref{gconv-nD}.
\end{proof}

Now define
\begin{equation*}
\tilde{\E}_\e(u,v):=
\begin{cases}
\E_\e(u,v) & \text{if}\; u\in W^{1,2}(\O), (v-1)\in W^{1,2}_0(\O)\cap W^{2,2}(\O)\\
&\text{and}\; v\nabla u\in L^2(\O;\R^n),
\cr\cr
+\infty & \text{otherwise in}\;L^1(\O)\times L^1(\O),
\end{cases}
\end{equation*}
then we can prove the following $\G$-convergence result.
\begin{theorem}\label{conv_lap}
The sequence $(\tilde{\E}_\e)$ $\G\hbox{-}$converges, with respect to the $(L^1(\O)\times L^1(\O))$-topology, to the functional $\MS$ as in (\ref{Mu-Sh}).
\end{theorem}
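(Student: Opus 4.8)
The plan is to split the $\G$-convergence statement into its limsup and liminf halves and to reduce each to facts already available: the $\G$-convergence of $\F_\e$ (Theorem \ref{gconv-nD}), the blow-up lower bound together with the $\G$-convergence of $\E_\e$ on $GSBV^2(\O)\times\{1\}$ (Proposition \ref{lower_bd} and \eqref{gconv_GSBV}), and — decisively — the elliptic estimate Proposition \ref{ellipt-reg}(ii), which is now at our disposal precisely because $\O$ has $C^2$ boundary and the Dirichlet-type condition $(v-1)\in W^{1,2}_0(\O)$ is part of the definition of $\tilde{\E}_\e$.

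For the limsup inequality I expect essentially no extra work. If $v\neq 1$ a.e.\ or $u\notin GSBV^2(\O)$ then $\MS(u,v)=+\infty$ and there is nothing to prove; if $v=1$ and $u\in GSBV^2(\O)$, I would take the recovery sequence $(u_\e,v_\e)$ already constructed in the proof of Theorem \ref{gconv-nD}. That sequence converges to $(u,1)$ in $L^1(\O)\times L^1(\O)$, satisfies $(v_\e-1)\in W^{2,2}_0(\O)\subset W^{1,2}_0(\O)\cap W^{2,2}(\O)$ and $v_\e\nabla u_\e\in L^2(\O;\R^n)$, hence is admissible for $\tilde{\E}_\e$; moreover, on the only region where $\nabla^2 v_\e\neq 0$ the function $v_\e$ depends on the distance to the relevant hyperplane alone, so that $|\Delta v_\e|=|\nabla^2 v_\e|$ there, while on the remaining (vanishing) part of the support of $v_\e-1$ both the Hessian and the Laplacian terms are infinitesimal (using $|\Delta v_\e|\le\sqrt n\,|\nabla^2 v_\e|$). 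Therefore $\tilde{\E}_\e(u_\e,v_\e)=\E_\e(u_\e,v_\e)\to\MS(u,1)$ by \eqref{gconv_GSBV}.

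For the liminf inequality I would argue as follows. Let $(u_\e,v_\e)\to(u,v)$ in $L^1(\O)\times L^1(\O)$; passing to a subsequence I may assume $\tilde{\E}_\e(u_\e,v_\e)\to L:=\liminf_{\e\to 0}\tilde{\E}_\e(u_\e,v_\e)<+\infty$ (otherwise there is nothing to prove). Then $(v_\e-1)\in W^{1,2}_0(\O)\cap W^{2,2}(\O)$ and $u_\e\in W^{1,2}(\O)$ for every $\e$, and the bound on $\int_\O(v_\e-1)^2/\e$ forces $v_\e\to 1$ in $L^2(\O)$, so $v=1$ a.e. Applying Proposition \ref{ellipt-reg}(ii) to $v_\e-1$ I obtain $\|\nabla^2 v_\e\|_{L^2(\O)}=\|\nabla^2(v_\e-1)\|_{L^2(\O)}\le c(\O)\,\|\Delta(v_\e-1)\|_{L^2(\O)}=c(\O)\,\|\Delta v_\e\|_{L^2(\O)}$, whence $\sup_\e\F_\e(u_\e,v_\e)<+\infty$ and $(u_\e,v_\e)$ is admissible for $\F_\e$; the liminf inequality of Theorem \ref{gconv-nD} then gives $\MS(u,1)\le\liminf_\e\F_\e(u_\e,v_\e)<+\infty$, so $u\in GSBV^2(\O)$. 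Finally, since $\tilde{\E}_\e\ge\E_\e$ pointwise and the two functionals coincide on ${\rm dom}\,\tilde{\E}_\e$, Proposition \ref{lower_bd} yields
\[ L=\liminf_{\e\to 0}\E_\e(u_\e,v_\e)\ \ge\ \G\hbox{-}\liminf_{\e\to 0}\E_\e(u,1)\ \ge\ \MS(u,1)=\MS(u,v), \]
which is the claimed bound.

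The step I expect to be the main obstacle is, as already in Proposition \ref{lower_bd}, passing from equi-bounded energy to the qualitative fact $u\in GSBV^2(\O)$ and, above all, recovering the \emph{sharp} coefficient $\b$ in front of $\mathcal{H}^{n-1}(S_u)$: the Laplacian functional cannot be sliced, so the one-dimensional analysis is not directly applicable. The crucial device is that the Dirichlet-type condition on $v-1$ together with the $C^2$ regularity of $\partial\O$ promotes control of $\|\Delta v_\e\|_{L^2(\O)}$ to control of the full Hessian $\|\nabla^2 v_\e\|_{L^2(\O)}$ via Proposition \ref{ellipt-reg}(ii) — this is the ``symmetry breaking disappears in the limit'' mechanism mentioned in the Introduction — which reduces matters to the statements already proved for $\F_\e$ and for $\E_\e$ on $GSBV^2(\O)$. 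The only subtlety I would be careful about is that Proposition \ref{ellipt-reg}(ii) is invoked globally on $\O$, not on a compactly contained subdomain, which is legitimate exactly because of the boundary condition and the $C^2$ hypothesis; the remaining verifications are routine.
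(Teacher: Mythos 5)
Your proposal is correct and follows essentially the same route as the paper: the global elliptic estimate of Proposition \ref{ellipt-reg}(ii), applicable to $v_\e-1$ thanks to the Dirichlet condition and the $C^2$ boundary, yields the pointwise bound \eqref{lb_ellreg} controlling the full Hessian by the Laplacian, which pins down the domain of the $\Gamma$-limit; the sharp liminf then comes from Proposition \ref{lower_bd} and the limsup from the recovery sequence of Theorem \ref{gconv-nD} (for which, as you note, the Hessian is rank-one in the normal direction so the Laplacian and Hessian contributions coincide where it matters, and $(v_\e-1)\in W^{2,2}_0(\O)$). The paper compresses all of this into a citation of Proposition \ref{gconv_GSBV} plus the estimate \eqref{lb_ellreg}, but the substance is identical to what you wrote.
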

\begin{proof}
The $\G$-convergence result is a straightforward consequence of Proposition \ref{gconv_GSBV} once we notice that thanks to the boundary conditions satisfied by $v$, we can now invoke Proposition \ref{ellipt-reg}(ii) to get
\begin{equation}\label{lb_ellreg}
\tilde{\E}_\e(u,v)\geq \int_\O\left(\a\,v^2|\nabla u|^2+\frac{\b}{2\sqrt{2}}\,\left(\frac{(v-1)^2}{\e}+c(\O)\e^3|\nabla^2 v|^2\right)\right)\dx,
\end{equation}
which together with (\ref{bound2}) allows us to conclude that the domain of the $\Gamma$-limit is $GSBV^2(\O) \times \{v=1 \; \text{a.e. in}\; \O\}$.
\end{proof}

\begin{remark}
The $C^2$-regularity of $\partial \O$ is only used to invoke Proposition \ref{ellipt-reg}(ii) in order to obtain the estimate from below (\ref{lb_ellreg}). We notice however that for $n=2$, which is the interesting case in numerical simulations, Proposition \ref{ellipt-reg}(ii) holds also true in bounded polygonal open sets (see e.g. \cite[Theorem 2.2.3]{Gr2}). 
\end{remark}

\subsection{Convergence of minimization problems} For every $(u,v)\in W^{1,2}(\O)\times W^{2,2}(\O)$ consider the functionals
\begin{equation}\label{funct2}
\tilde\E_\e(u,v)+\eta_\e\int_\O |\nabla u|^2\dx+\gamma\int_\O |u-g|^2\dx.
\end{equation} 
Appealing to Theorem \ref{conv_lap} and to the fundamental property of $\G$-convergence, also in this case we can prove a result on the convergence of associated minimization problems.  

\begin{theorem}\label{eqcoerc-1}
For every fixed $\e>0$ there exists a minimizing pair $(\tilde u_\e,\tilde v_\e)$ for the problem
\begin{equation*}
M_\e:=\inf\left\{\tilde\E_\e(u,v)+\eta_\e\int_\O |\nabla u|^2\dx+\gamma\int_\O |u-g|^2\dx\colon (u,v)\in W^{1,2}(\O)\times W^{2,2}(\O)\right\}.
\end{equation*} 
Moreover, up to subsequences, $(\tilde u_\e,\tilde v_\e)\to (u,1)$ in $L^1(\O)\times L^1(\O)$ where $u$ is a solution to
\begin{equation}
{M}:=\min\left\{\MS(u,1)+\gamma\int_\O|u-g|^2\dx \colon u\in GSBV^2(\O)\right\};
\end{equation}
if $n=1$ then $u\in SBV^2(\O)$. Further, $M_\e \to M$ as $\e\to 0$.
\end{theorem}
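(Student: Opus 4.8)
The plan is to follow the proof of Theorem~\ref{eqcoerc} almost verbatim, substituting Theorem~\ref{conv_lap} for Theorem~\ref{gconv-nD} and using the Dirichlet-type constraint $(v-1)\in W^{1,2}_0(\O)\cap W^{2,2}(\O)$ (and hence the $C^2$-regularity of $\partial\O$) to compensate for the fact that the Laplacian alone does not control the full Hessian. First, for fixed $\e>0$ the existence of a minimizing pair follows from the direct method once one observes that the perturbation makes the functional coercive. Taking a minimizing sequence $(u_k,v_k)$ --- so that $(v_k-1)\in W^{1,2}_0(\O)\cap W^{2,2}(\O)$ --- the term $\eta_\e\int_\O|\nabla u_k|^2\dx$ and the fidelity term give $\|u_k\|_{W^{1,2}(\O)}\leq c$ and $\|v_k\nabla u_k\|_{L^2(\O;\R^n)}\leq c$; since $\frac1\e\int_\O(v_k-1)^2\dx+\e^3\int_\O|\Delta v_k|^2\dx\leq c$, Proposition~\ref{ellipt-reg}(ii) applied to $v_k-1$ yields $\|\nabla^2 v_k\|_{L^2}\leq c\|\Delta v_k\|_{L^2}\leq c$, so $\|v_k-1\|_{W^{2,2}(\O)}\leq c$. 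Up to a subsequence $u_k\wto u$ in $W^{1,2}(\O)$, $v_k\wto v$ in $W^{2,2}(\O)$ (hence $v_k\to v$ strongly in $W^{1,2}(\O)$, so the affine constraint persists), and $v_k\nabla u_k\wto w$ in $L^2$; uniqueness of weak limits gives $w=v\nabla u$, and weak lower semicontinuity of the $L^2$-norms produces the desired $(\tilde u_\e,\tilde v_\e)$.

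Next, I would show that
\begin{equation*}
\tilde{\E}_\e(u,v)+\eta_\e\int_\O|\nabla u|^2\dx+\gamma\int_\O|u-g|^2\dx\ \stackrel{\Gamma}{\longrightarrow}\ \MS(u,v)+\gamma\int_\O|u-g|^2\dx
\end{equation*}
with respect to the strong $(L^1(\O)\times L^1(\O))$-topology. As $\gamma\int_\O|u-g|^2\dx$ is $L^1$-continuous, by stability of $\Gamma$-convergence under continuous perturbations it suffices to treat $\tilde{\E}_\e+\eta_\e\int_\O|\nabla u|^2\dx$. The liminf inequality is obtained by discarding the nonnegative term $\eta_\e\int_\O|\nabla u|^2\dx$ and invoking Theorem~\ref{conv_lap} (equivalently Proposition~\ref{lower_bd}). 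For the limsup inequality I reuse the recovery sequence from Theorem~\ref{gconv-nD}, which satisfies $(v_\e-1)\in W^{2,2}_0(\O)\subset W^{1,2}_0(\O)\cap W^{2,2}(\O)$ and is therefore admissible for $\tilde{\E}_\e$, but now with $\xi_\e=\sqrt{\eta_\e\,\e}$ (so $\xi_\e/\e\to0$ still holds); since $v_\e$ vanishes on the set where $u_\e\neq u$ and $\nabla u\in L^2(\O)$, the limsup estimate is unchanged and in addition $\eta_\e\int_\O|\nabla u_\e|^2\dx\to0$, exactly as in the proof of Theorem~\ref{eqcoerc}.

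For the equicoercivity, let $(u_\e,v_\e)$ satisfy $\sup_\e\big(\tilde{\E}_\e(u_\e,v_\e)+\eta_\e\int_\O|\nabla u_\e|^2\dx+\gamma\int_\O|u_\e-g|^2\dx\big)<+\infty$. Combining Proposition~\ref{ellipt-reg}(ii) and the interpolation inequality Proposition~\ref{interpol-ineq}, both applied to $v_\e-1$, yields $\e\int_\O|\nabla v_\e|^2\dx\leq c\,\tilde{\E}_\e(u_\e,v_\e)$, hence $\sup_\e\mathcal{AT}_\e(u_\e,v_\e)<+\infty$ with $\mathcal{AT}_\e$ as in \eqref{f:AT}; then \cite[Theorem~1.2]{AT92} gives precompactness of $(u_\e)$ in $L^1(\O)$, and since $\|v_\e-1\|_{L^2(\O)}^2\leq c\,\e\to0$ we also have $v_\e\to1$ in $L^1(\O)$, so $(u_\e,v_\e)$ is precompact in $L^1(\O)\times L^1(\O)$. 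The fundamental theorem of $\Gamma$-convergence then gives that, up to subsequences, $(\tilde u_\e,\tilde v_\e)\to(u,1)$ in $L^1(\O)\times L^1(\O)$ with $u$ a minimizer of $\MS(\cdot,1)+\gamma\int_\O|\cdot-g|^2\dx$ over $GSBV^2(\O)$ (over $SBV^2(\O)$ when $n=1$), and $M_\e\to M$.

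I expect the only genuinely delicate point --- and the place where the argument differs from that of Theorem~\ref{eqcoerc} --- to be the consistent handling of the boundary condition on $v$: it has to pass to weak $W^{2,2}$-limits (existence step), be satisfied by the recovery sequence (limsup step), and, most importantly, it is exactly what licenses Proposition~\ref{ellipt-reg}(ii), i.e.\ the upgrade from $\|\Delta v\|_{L^2}$-control to $\|\nabla^2 v\|_{L^2}$-control used both in the compactness of minimizing sequences and in the equicoercivity estimate. This is the step where the $C^2$-regularity of $\partial\O$ (or, for $n=2$, polygonal regularity) is indispensable.
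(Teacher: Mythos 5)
Your proposal is correct and follows exactly the route the paper takes: its proof of Theorem \ref{eqcoerc-1} consists of a single remark that one repeats the argument of Theorem \ref{eqcoerc}, with Proposition \ref{ellipt-reg}(ii) (licensed by the boundary condition $(v-1)\in W^{1,2}_0(\O)\cap W^{2,2}(\O)$ and the $C^2$-regularity of $\partial\O$) supplying the $W^{2,2}$-bound on $v$ needed for \eqref{convergenze}, which is precisely the key point you identify. Your write-up simply fills in the details the paper leaves implicit.
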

\begin{proof}
The proof follows the line of that of Theorem \ref{eqcoerc} once we notice that the convergence \eqref{convergenze} is now ensured by Proposition \ref{ellipt-reg}(ii). 
\end{proof}
%

\section{Numerical Results}

\noindent In this section we discuss some numerical results for the second-order approximation $\tilde\E_\e(u,v)$\ie we minimize  \eqref{funct2} for reasons of practicability. First of all, the discretization of the Laplacian is more straightforward and leads to more compact schemes compared to the discretization of the Hessian involving mixed derivatives. As we have noticed in Remark \ref{tildeFepsilon} also the Hessian penalization can be rewritten into a Laplacian one under certain conditions, which are however not suitable for a numerical implementation since we need to enforce a solution in $W_0^{2,2}$. The latter means we have to implement simultaneous Dirichlet and Neumann boundary conditions, which is not feasible in finite difference or finite element discretizations without enforcing additional constraints. A second argument comes from the comparison with the first-order Ambrosio-Tortorelli functional, which already includes a Laplacian in the optimality condition, while the optimality condition for \eqref{funct2} changes only to a concatenation of two Laplacians. Hence, the modification of a code for the Ambrosio-Tortorelli functional to the second-order version \eqref{funct2} is straightforward and allows for a comparison of computational efficiency.

We shall report on several computational experiments, starting with simple synthetic images that allow for a detailed study of fine properties such as the realization of the optimal profile already for a rather low number of pixels. Subsequently we investigate the behaviour on a set of natural and biomedical images, highlighting several differences of the second-order approach to the classical Ambrosio-Tortorelli functional. All experiments are carried out in two spatial dimensions, but we mention that extensions to volume data sets are obvious. Since a combination of the parameters $\alpha$, $\beta$ and $\gamma$ is redundant, we choose $\beta=0.3$ in all experiments.

\subsection{Numerical Solution}

In order to minimize the functional in \eqref{funct2} we follow the common strategy of iterative minimization. Hence, given an iterate $(u^k,v^k)$ we subsequently compute
\begin{eqnarray*}
	v^{k+1} &\in& \text{arg}\min_v \tilde\E_\e(u^k,v) \\
  u^{k+1} &\in& \text{arg}\min_u \tilde\E_\e(u,v^{k+1})+\eta_\e\int_\O |\nabla u|^2\dx+\gamma\int_\O |u-g|^2\dx.
\end{eqnarray*}
This yields a descent method for the overall functional, which is based on solving two quadratic minimization problems, respectively the corresponding linear optimality systems in each case. The same method is used to minimize the original Ambrosio-Tortorelli functional in \eqref{ATfunct}. 

The linear equation to be solved for $u^{k+1}$ is in both cases
\begin{equation*}
	- 2 \a \nabla \cdot ((v^{k+1})^2 \nabla u^{k+1})- \eta_\e \Delta u^{k+1} + \gamma u^{k+1} = \gamma g.
\end{equation*}
The linear equation for $v^{k+1}$ is given by 
\begin{equation*}
	2 \a |\nabla u^k|^2 v^{k+1}  +\frac{\b}{\sqrt{2}\e} v^{k+1} + \frac{\b \e^3}{\sqrt{2}} \Delta \Delta v^{k+1}  = \frac{\b}{\sqrt{2}\e} 
\end{equation*}
in the case of the new second-order functional, respectively by
\begin{equation*}
	2 \a |\nabla u^k|^2 v^{k+1}  +\frac{\b}{\e} v^{k+1} -{\b \e} \Delta v^{k+1}  = \frac{\b}{\e} 
\end{equation*}
in the case of the Ambrosio-Tortorelli functional.


We discretize the functionals in \eqref{funct2} and \eqref{ATfunct} by standard finite differences on a rectangular grid, using one-sided (forward) differencing for the gradient and the adjoint one-sided differences for the divergence, hence the usual central differencing for the Laplacian.  For the discretized linear systems we use a direct solver. Convergence diagnostics and stopping rules are based on the relative size of the change in the images\ie  
\begin{equation*}
	e^k = \max\left\{ \frac{\Vert u^{k+1}- u^k\Vert_\infty}{\Vert u^{k+1}\Vert_\infty},\frac{\Vert v^{k+1}- v^k\Vert_\infty}{\Vert v^{k+1}\Vert_\infty} \right\}
\end{equation*}

\subsection{Simple Test Examples}

We start with a simple image showing a one-dimensional structure in the vertical direction. Figure \ref{1dfig} illustrates the results for the parameter settings $\alpha=10^{-2}, \gamma=10^{-3},\e=3*10^{-2}$, which yield a visually optimal result at the given image resolution. The segmentations obtained from the two models (see images of $v$ in the middle) are not distinguishable by eye, a fine difference can be seen however when exploring the level sets where $v$ is slightly larger than one in the second-order model. Note that due to a maximum principle respectively the monotone shape of the optimal profile in the Ambrosio-Tortorelli model the variable $v$ is always less or equal one, while the optimal profile in the second-order model exceeds one, hence the corresponding level set shall provide further information about edge location. This is illustrated in the right-most plot in Figure \ref{1dfig}, from which one observes that $v$ in the second-order model can provide an approximation of the edge set from both sides - an accurate reconstruction can be obtained as the midpoints between the local maxima. This behaviour is also present for larger values of $\e$ as illustrated in the supplementary material.

\begin{figure}[!!!h]
\includegraphics[width=0.2\textwidth,frame]{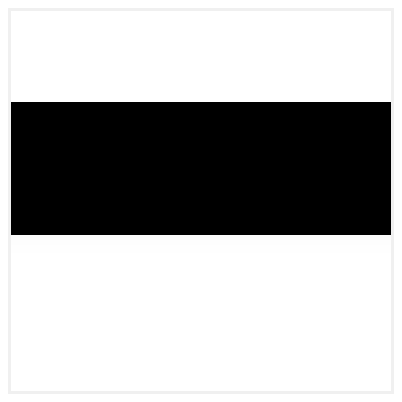}
\includegraphics[width=0.2\textwidth,frame]{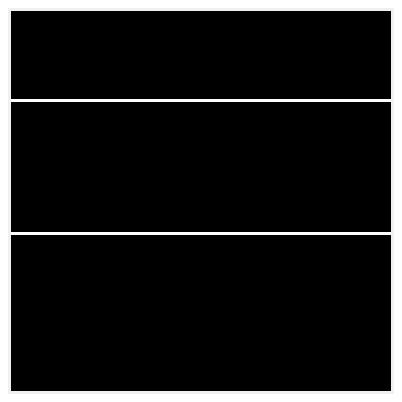}
\includegraphics[width=0.2\textwidth,frame]{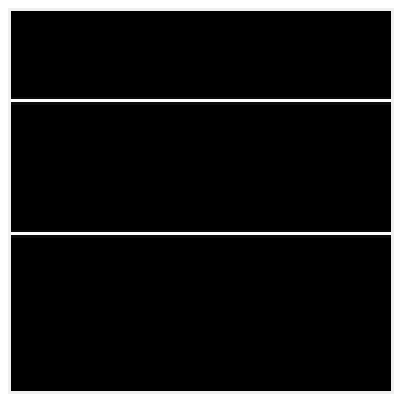}
\includegraphics[width=0.2\textwidth,frame]{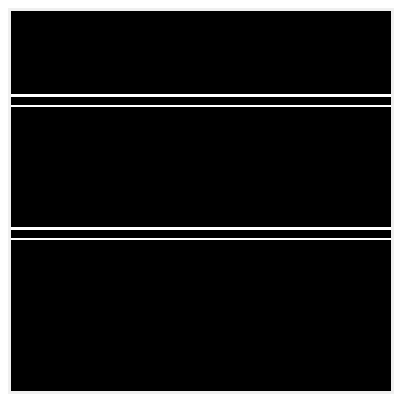}
\caption{One-dimensional structure (from left to right): Image $g$, resulting $v$ in the Ambrosio-Tortorelli model, resulting $v$ in the second-order model, binary plot of the level set $\{ v > 1.005 \}$ in the second-order model. }
\label{1dfig}
\end{figure}

With the same set of parameters we also compute minimizers for images of an ellipse with large ratio between the main directions, illustrated in Figure \ref{ellipsefig} and two overlapping circles, illustrated in Figure \ref{twocirclefig}. Both images are perturbed by additive Gaussian noise to test also the effect of noise on the results. We again plot the resulting minimizers $v$ for both models and the level set for the second-order models. The clean images $u$ yield no visible differences and are shown in the supplementary material for completeness. Overall we observe analogous behaviour as for the one-dimensional example, a remarkable fact is that the level sets of $v$ in the second-order model are able to provide a well separated segmentation of the overlap region.

\begin{figure}[!!!h]
\includegraphics[width=0.2\textwidth,frame]{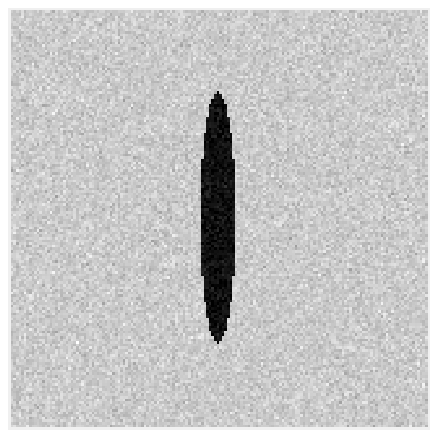}
\includegraphics[width=0.2\textwidth,frame]{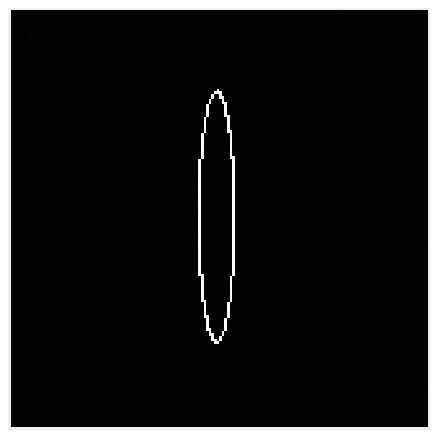}
\includegraphics[width=0.2\textwidth,frame]{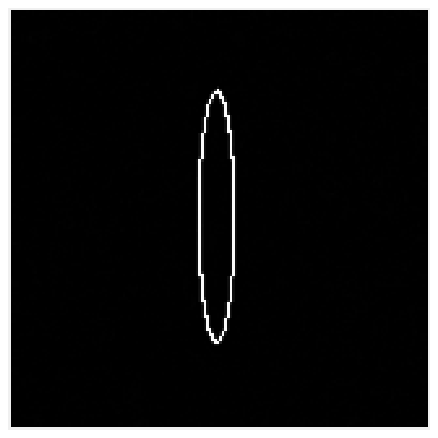}
\includegraphics[width=0.2\textwidth,frame]{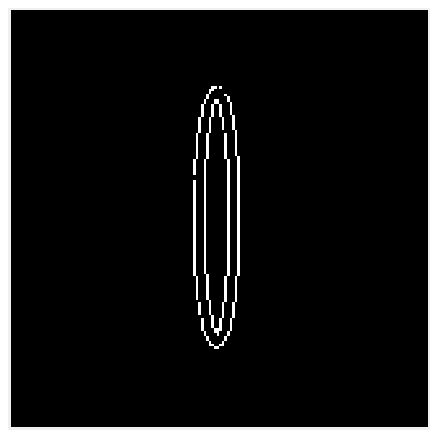}
\caption{Ellipse (from left to right): Image $g$, resulting $v$ in the Ambrosio-Tortorelli model, resulting $v$ in the second-order model, binary plot of the level set $\{ v > 1.005 \}$ in the second-order model. }
\label{ellipsefig}
\end{figure}

\begin{figure}[!!!h]
\includegraphics[width=0.2\textwidth,frame]{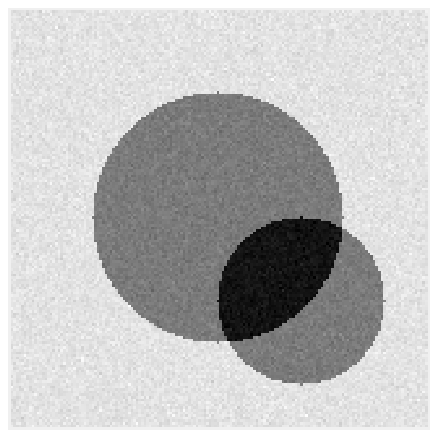}
\includegraphics[width=0.2\textwidth,frame]{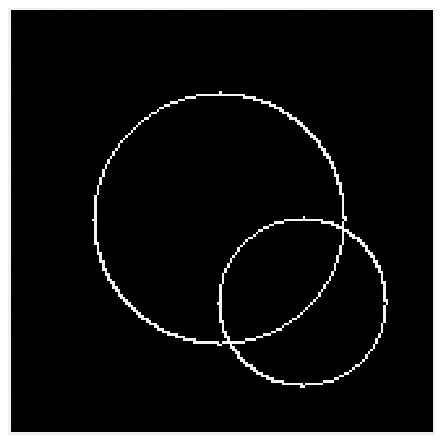}
\includegraphics[width=0.2\textwidth,frame]{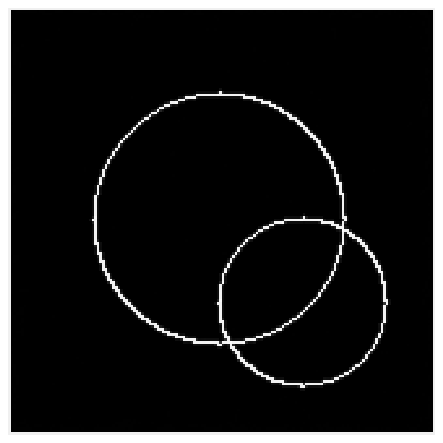}
\includegraphics[width=0.2\textwidth,frame]{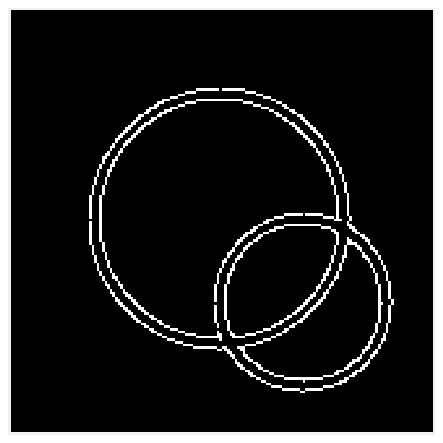}
\caption{Two circles (from left to right): Image $g$, resulting $v$ in the Ambrosio-Tortorelli model, resulting $v$ in the second-order model, binary plot of the level set $\{ v > 1.005 \}$ in the second-order model. }
\label{twocirclefig}
\end{figure}

\subsection{Natural and Biomedical Images}

In the following we report on the results of the second-order model for few examples of natural images as well as different kinds of microscopy images. A general observation on all those images is that at the given resolution (rather small compared to the size of the structures) and the hence possible choices of $\e$, the convergence to the optimal profile is far less pronounced than for the simple images above. Consequently the level set plot does not show an approximation of edges from both sides, for this reason we do not display the plots here. On the other hand we observe more interesting behaviour in the clean images $u$, which we also provide in the supplementary material. Let us mention that the $\Gamma$-convergence to the same minimizer does not mean that the original Ambrosio-Tortorelli functional and the second-order approach yield the same or very similar results on real images, which is due to many effects such as the given finite resolution of the image and choice of $\e$, the details of convergence in $\e$, as well as the level of convergence in the numerical minimization. 

The algorithm was tested on various natural images such as the Kodak image test set (see also the supplementary material). The most pronounced difference between the original Ambrosio-Tortorelli model and the novel second-order version concerns structures at a small scale, which is however still larger than the typical scale of noise. This is well illustrated in a portrait photograph of a person containing freckles (see Figure \ref{lentigginifig}, for parameters $\alpha=3*10^{-2},\gamma=3*10^{-3},\e=7*10^{-2}$ ). Such a behaviour is observed also for a wider range of parameters and seems clearly related to the stronger smoothing of the second-order model in higher frequencies. Another - at least visual - impression confirmed also in other results is that the contours being present in the results of both models appear smoother in the second-order model, which may be advantageous in many cases.

We also report on an effect we obtain for rather large choice of $\e$, i.e. rather far from convergence. This is illustrated in a phase-contrast microscopy image of a mitotic cell (cf. \cite{grah}). Such images are challenging for segmentation algorithms due to halo effects at the border of the cell, while the interior has similar grey value as the surrounding medium. Choosing $\e$ much larger than in the examples before ($\alpha=3*10^{-2},\gamma=3*10^{-3},\e=5*10^{-1}$) we obtain a contour $v$ that actually fills the whole interior. This effect is clearly benefitial for the second-order model, whose result allows a simple tracking of the cell.

\begin{figure}[!!!h]
\includegraphics[width=0.2\textwidth,frame]{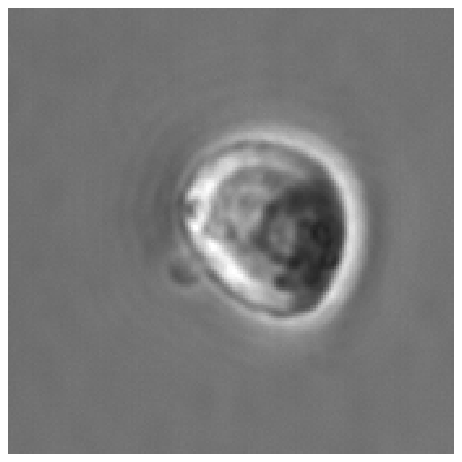}
\includegraphics[width=0.2\textwidth,frame]{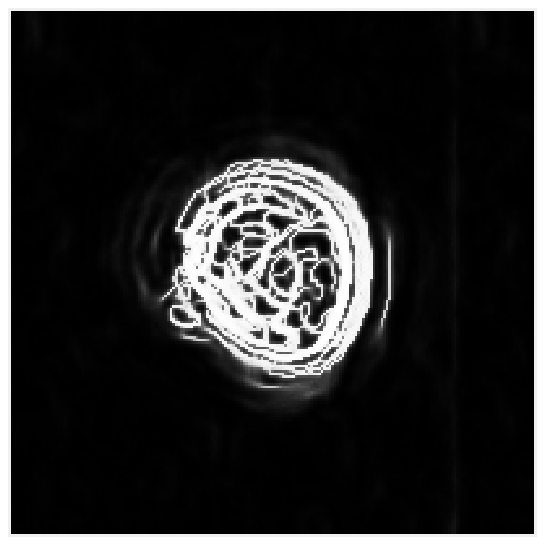}
\includegraphics[width=0.2\textwidth,frame]{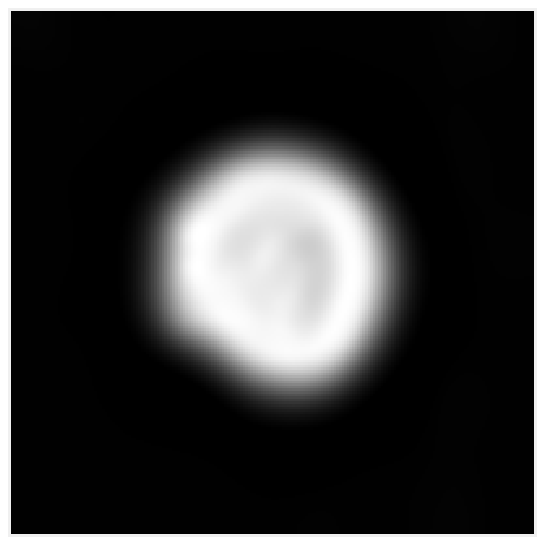}
\caption{Mitotic cell, from \cite{grah} (from left to right): Image $g$, resulting $v$ in the Ambrosio-Tortorelli model, resulting $v$ in the second-order model. }
\label{mitosisfig}
\end{figure}

We finally comment on the convergence behaviour of alternating minimization algorithms routinely used for approximating minimizers of the Ambrosio-Tortorelli functional. The observation made in the majority of our numerical experiments is that the number of iterations needed for fixed accuracy in the second-order model is at least comparable to those for the standard first-order version, in many cases the number of iterations is significantly reduced for the second-order model in particular for real images. In some parameter cases a visual comparison of results indicates that one obtains a global minimizer for the second-order model, while the iteration for the first-order model is stuck in a suboptimal local minimum, which is however difficult to verify. We refer to the supplementary material to a collection of computational investigations of convergence.

\section*{Acknowledgements}

The work of MB has been supported by ERC via Grant EU FP 7 - ERC Consolidator Grant 615216 LifeInverse and by the German Science Foundation DFG via  EXC 1003 Cells in Motion Cluster of Excellence, M\"unster, Germany. The authors thank S\o ren Udby (http://kanonfotografen.wordpress.com/) for permission to use the {\em Sisse} image.

\newpage


\appendix

\section{Supplementary material}

\subsection{Convergence Behaviour of Alternating Minimization Algorithms}

The convergence indicator $e^k$ is plotted vs. the number of iterations in several examples in Figure \ref{convergencefig} and \ref{kodakconvergencefig}. Figure \ref{nonconvergencefig} illustrates two cases of parameters where alternating minimization on the Ambrosio-Tortorelli functional did not converge.

\begin{figure}[!h]
\includegraphics[width=0.3\textwidth,frame]{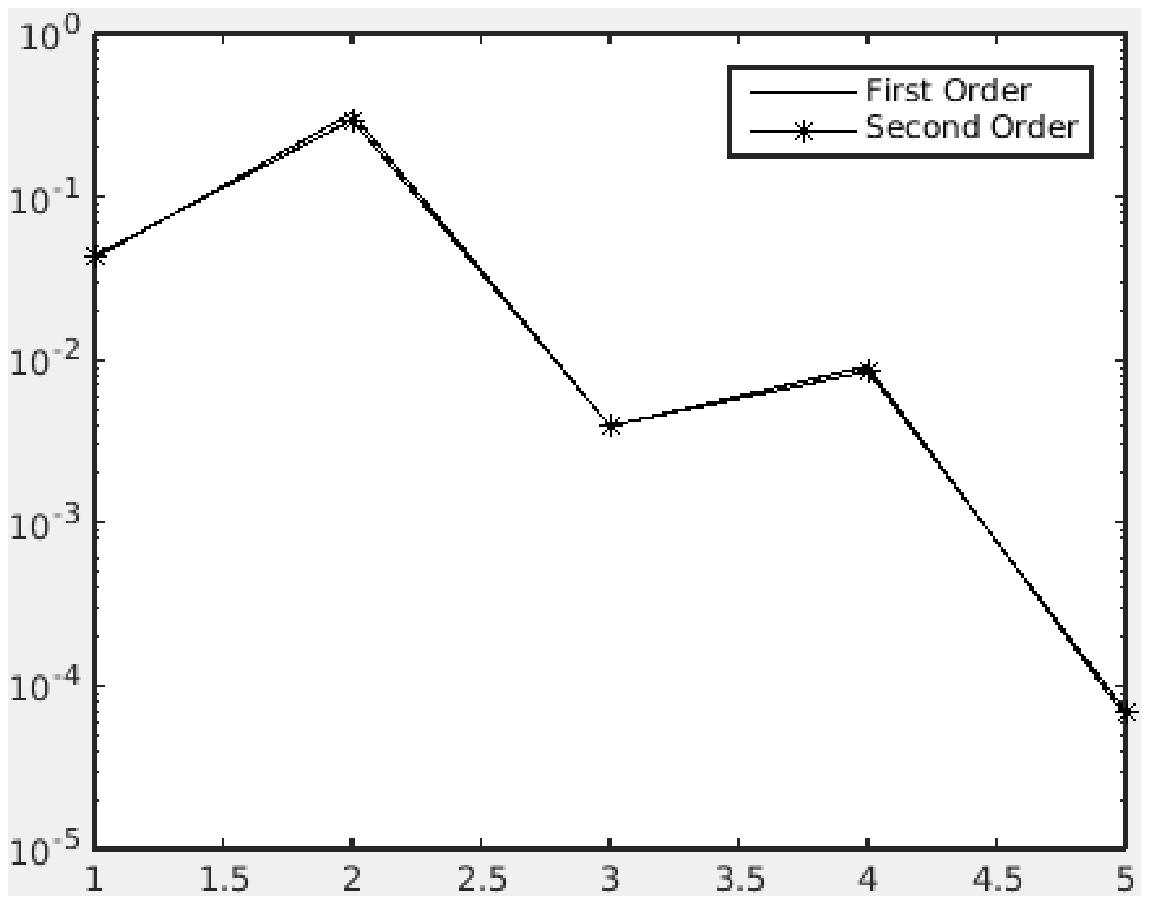}
\includegraphics[width=0.3\textwidth,frame]{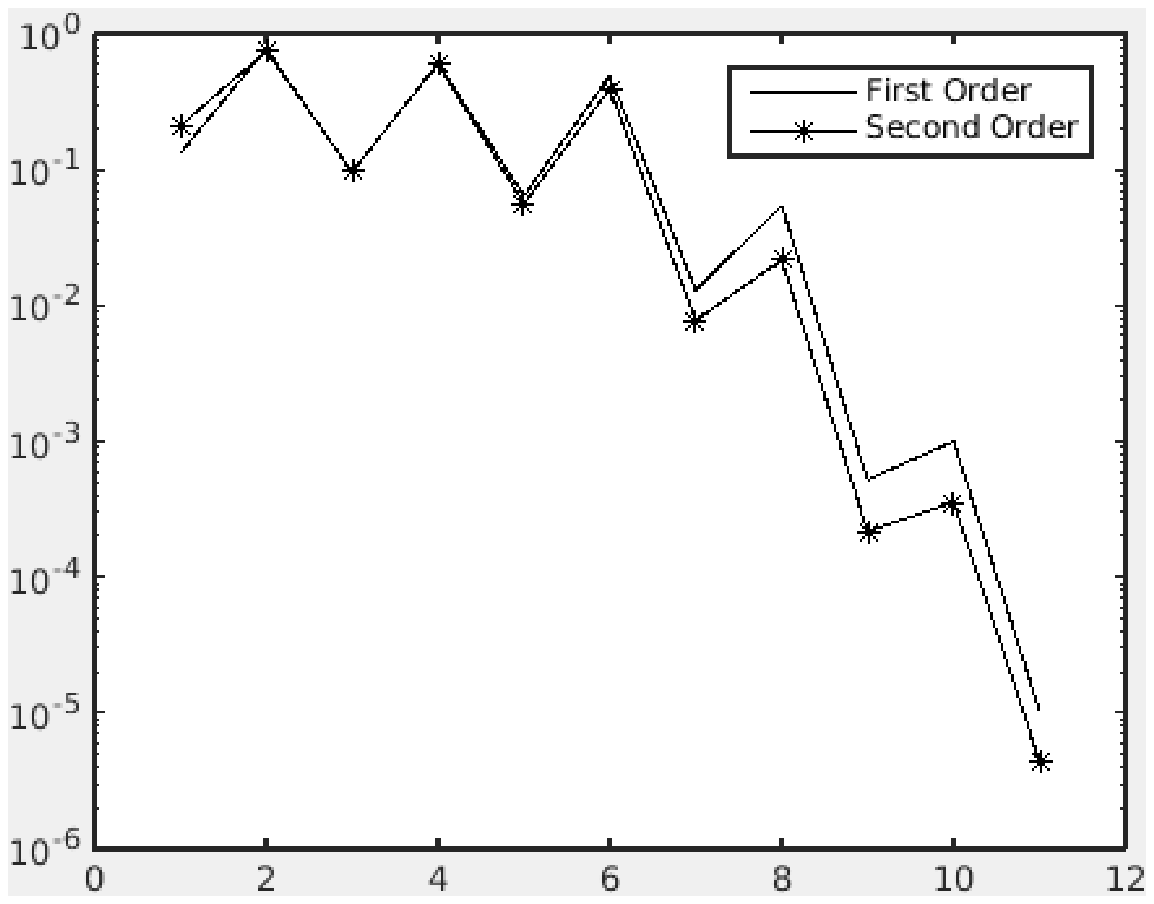}
\includegraphics[width=0.3\textwidth,frame]{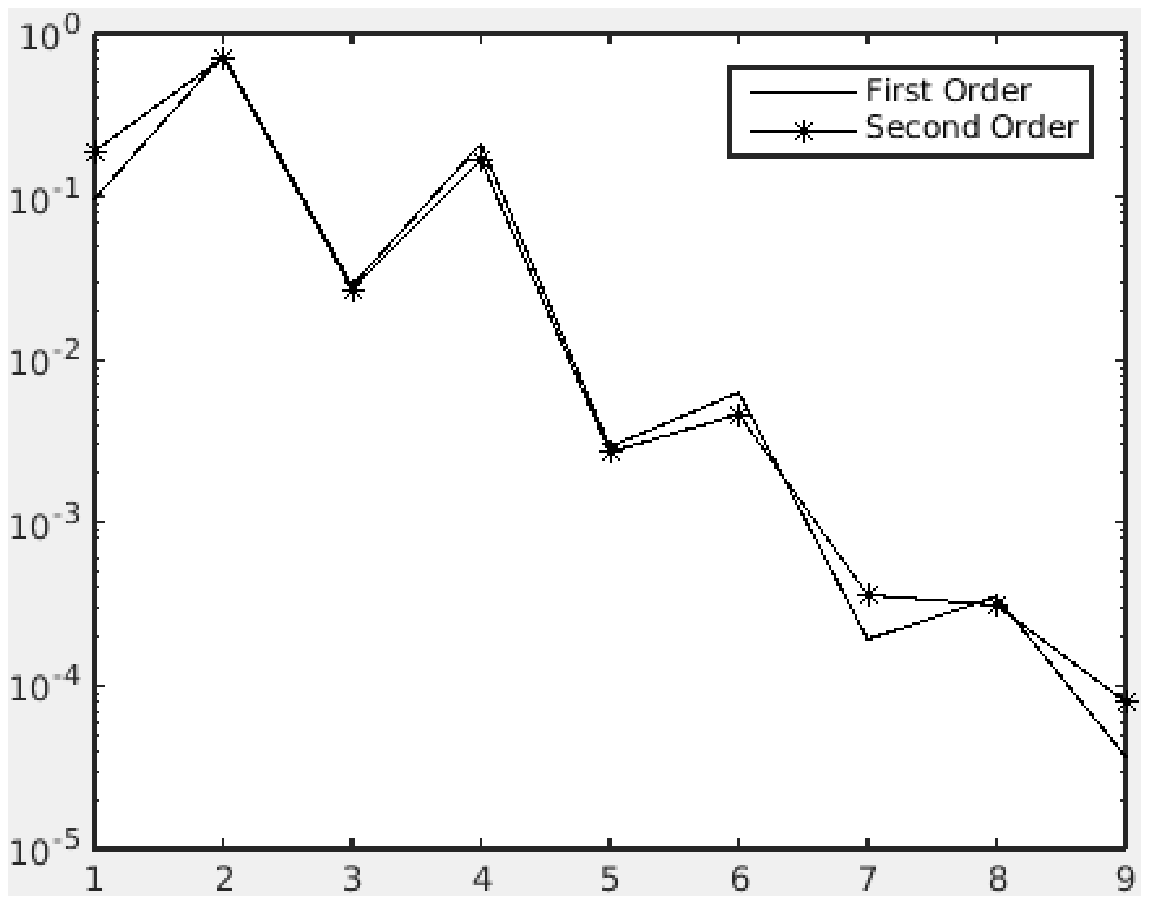}
\caption{Convergence history of $e^k$ vs. number of iterations $k$. One-dimensional example (left, $\alpha=10^{-2}, \gamma=10^{-3},\e=9*10^{-2}$), ellipse (middle, $\alpha=10^{-2}, \gamma=10^{-3},\e=3*10^{-2}$), two circles (right, $\alpha=10^{-2}, \gamma=10^{-3},\e=3*10^{-2}$). }
\label{convergencefig}
\end{figure}

\begin{figure}[!h]
\includegraphics[width=0.3\textwidth,frame]{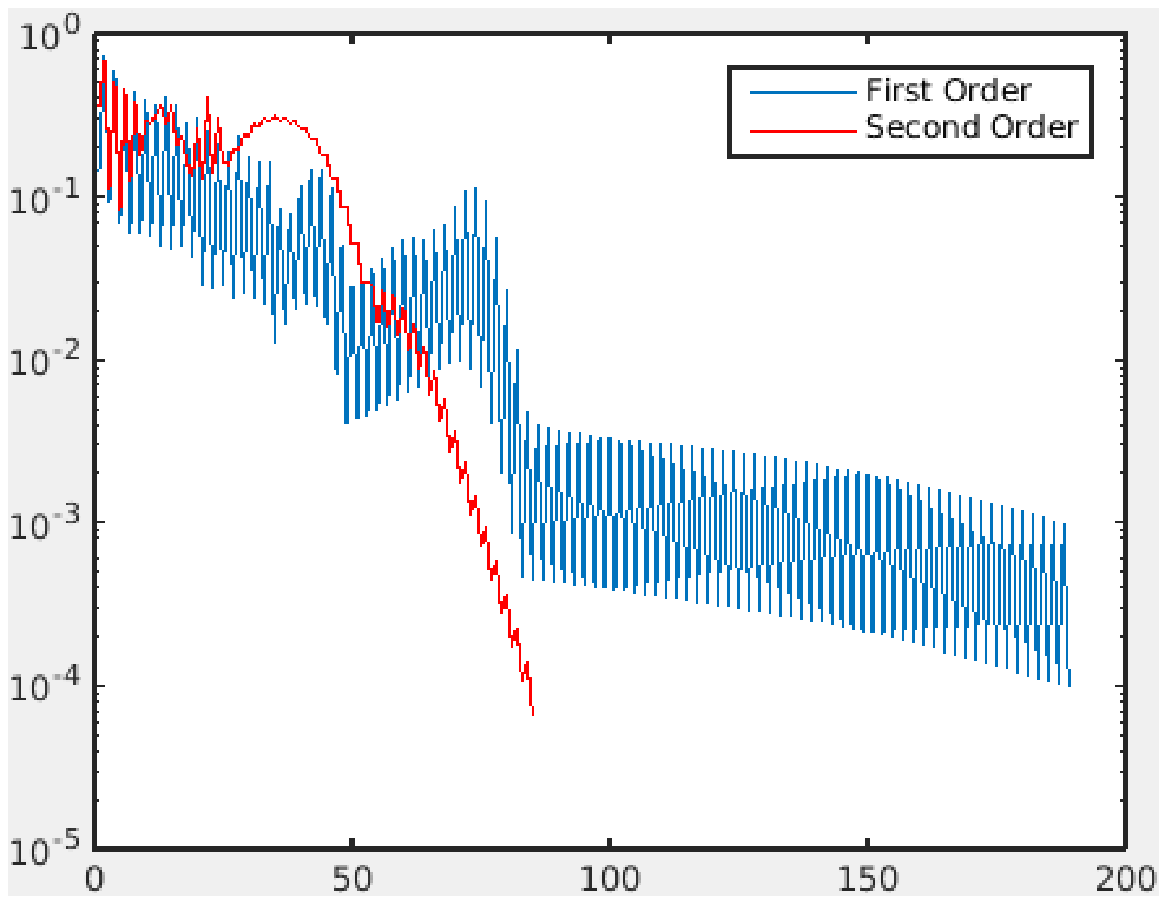}
\includegraphics[width=0.3\textwidth,frame]{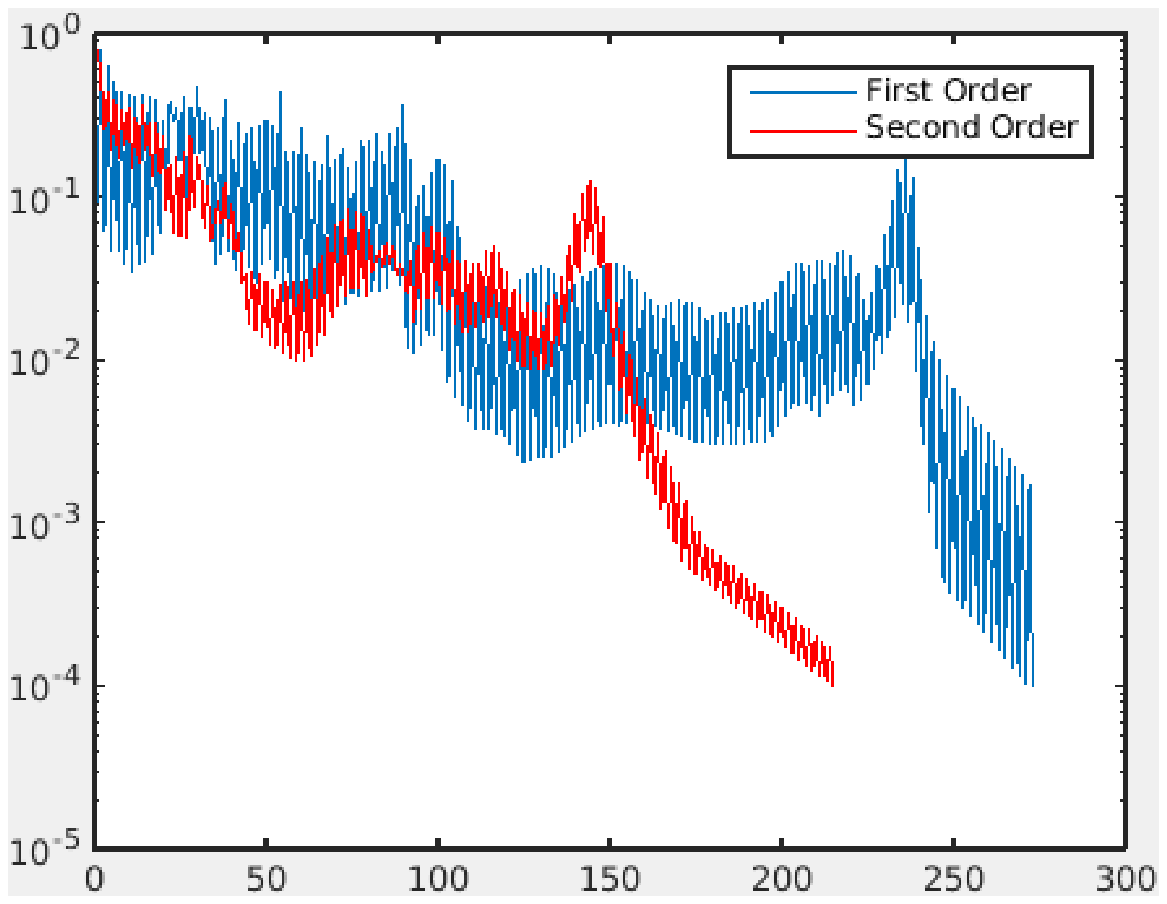}
\includegraphics[width=0.3\textwidth,frame]{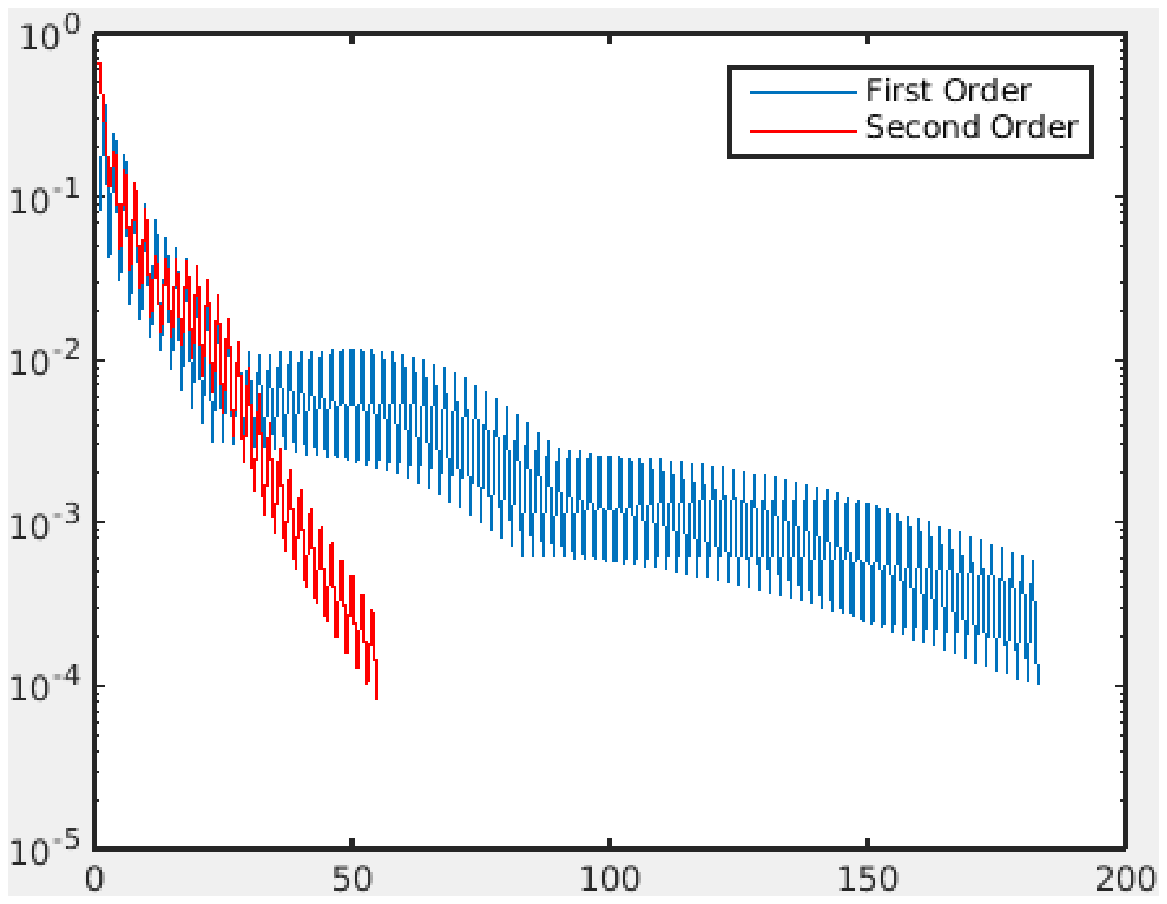}
\caption{Convergence history of $e^k$ vs. number of iterations $k$. Kodak image nr2  (left, $\alpha=10^{-2}, \gamma=10^{-3},\e=3*10^{-2}$), nr 7 (middle, $\alpha=10^{-2}, \gamma=10^{-3},\e=7*10^{-2}$), nr 23 (right, $\alpha=10^{-2}, \gamma=10^{-3},\e=7*10^{-2}$). }
\label{kodakconvergencefig}
\end{figure}

\begin{figure}
\includegraphics[width=0.4\textwidth,frame]{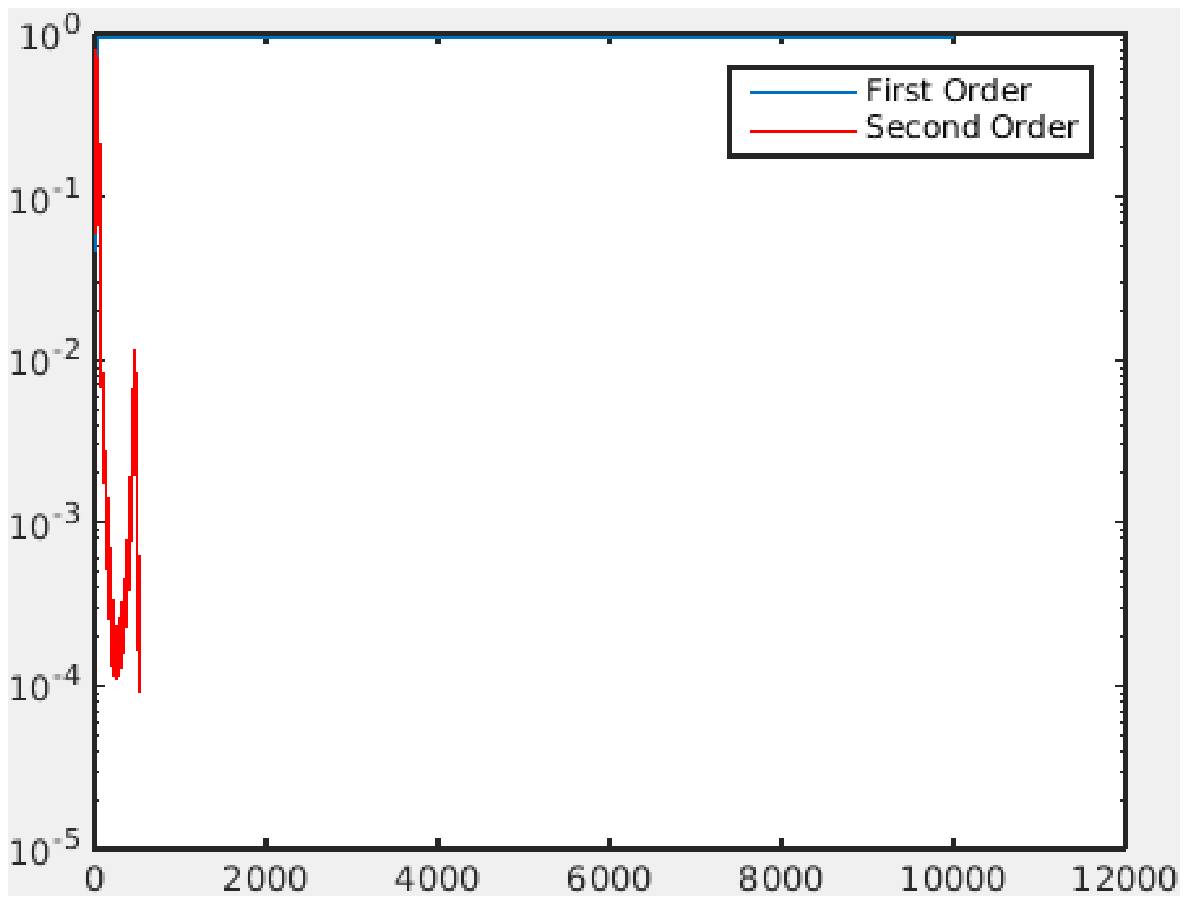}
\includegraphics[width=0.4\textwidth,frame]{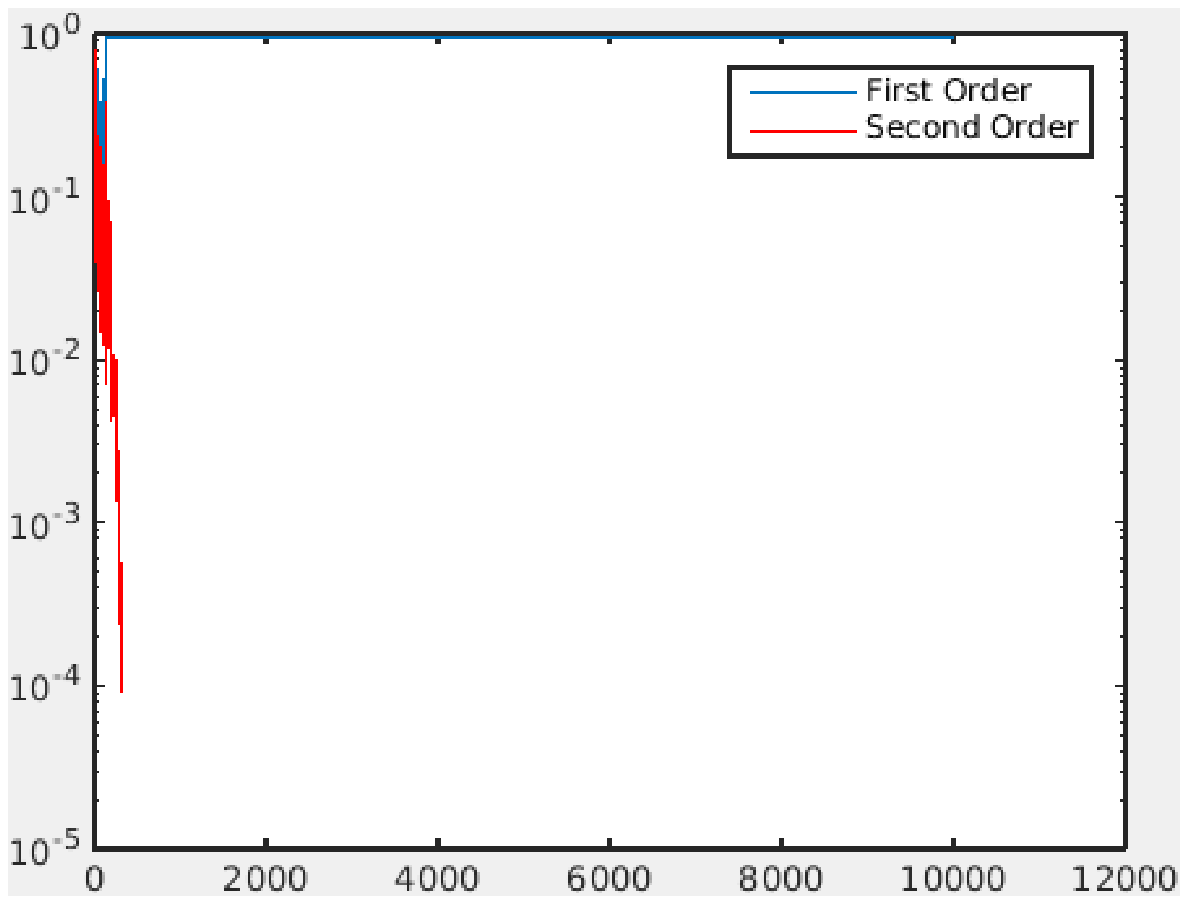}
\caption{Convergence history of $e^k$ vs. number of iterations $k$. Kodak image nr 7 (left, $\alpha=10^{-2},\gamma=7*10^{-4},\e=7*10^{-2}$), Kodak image nr 23 (right, $\alpha=10^{-2},\gamma=7*10^{-4},\e=7*10^{-2}$). }
\label{nonconvergencefig}
\end{figure}

\subsection{One-dimensional Structure}

Figure \ref{1dfiglarge} displays further results for the one-dimensional structure, parameter
$\alpha=10^{-2}, \gamma=10^{-3},\e=9*10^{-2}$.

\begin{figure}
\includegraphics[width=0.25\textwidth,frame]{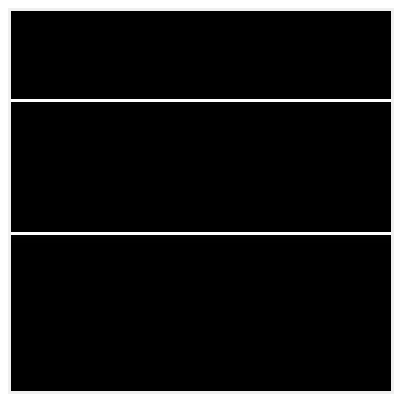}
\includegraphics[width=0.25\textwidth,frame]{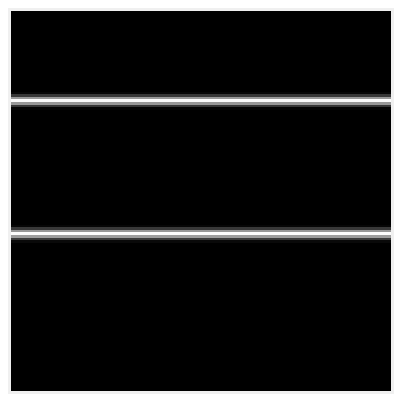}
\includegraphics[width=0.25\textwidth,frame]{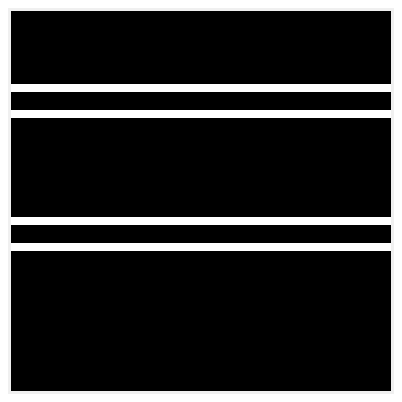}
\caption{One-dimensional structure (from left to right): Image $g$, resulting $v$ in the Ambrosio-Tortorelli model, resulting $v$ in the second-order model, binary plot of the level set $\{ v > 1.005 \}$ in the second-order model, $\e=9*10^{-2}$. }
\label{1dfiglarge}
\end{figure}

\subsection{Results on Kodak Images}

We display some examples of results on the Kodak images 2, 7, and 23, displayed in Figure \ref{kodakdatafig}.  Figures \ref{kodak2avfig}, \ref{kodak2bvfig}, \ref{kodak2cvfig} display the resulting segmentation $v$ for the Kodak image nr 2 with $\alpha=10^{-2}$ and different values of $\gamma$ and $\e$. Figures \ref{kodak7avfig} and \ref{kodak7bvfig} display the results for the Kodak image nr 7 with $\e=7*10^{-2}$ and different values of $\alpha$ and $\gamma$.
Figure \ref{kodak23vfig} displays the resulting $v$ in Kodak image nr 23 for $\alpha=10^{-2},\gamma=10^{-3}, \e=7*10^{-2}$.

\begin{figure}
\includegraphics[width=0.25\textwidth,frame]{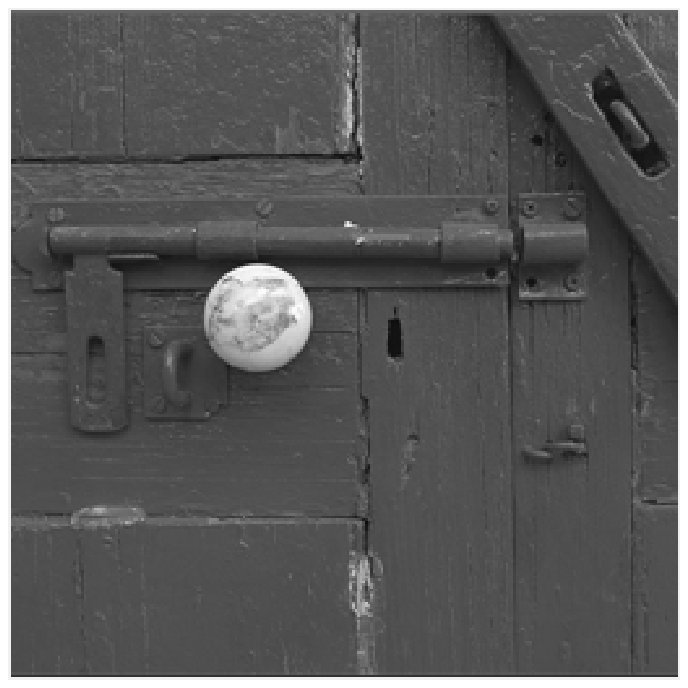}
\includegraphics[width=0.25\textwidth,frame]{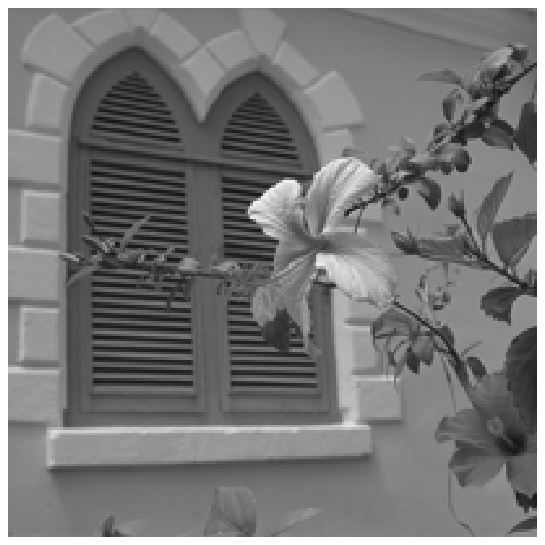}
\includegraphics[width=0.25\textwidth,frame]{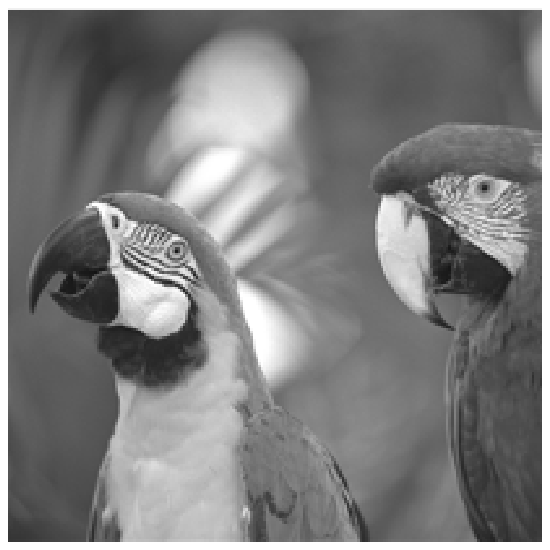}
\caption{Kodak image nr 2 (left), nr 7 (middle) and nr 23 (right). }
\label{kodakdatafig}
\end{figure}

\begin{figure}
\includegraphics[width=0.4\textwidth,frame]{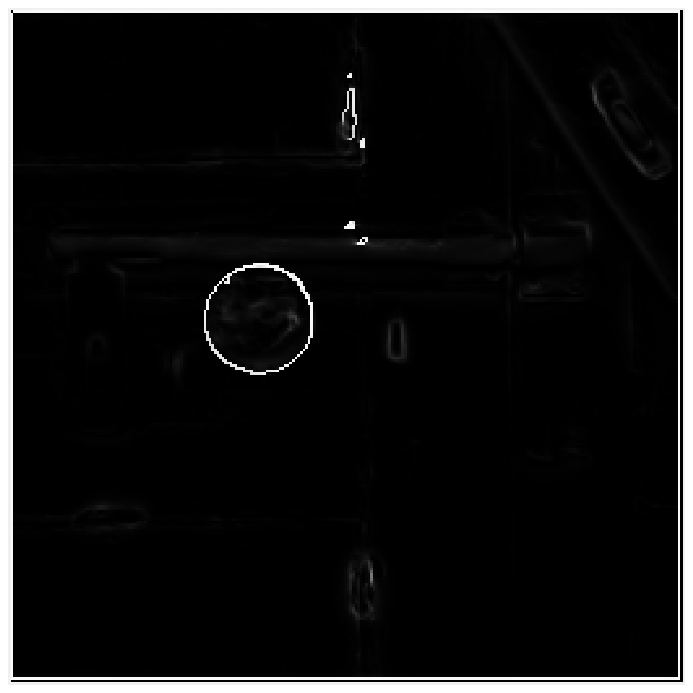}
\includegraphics[width=0.4\textwidth,frame]{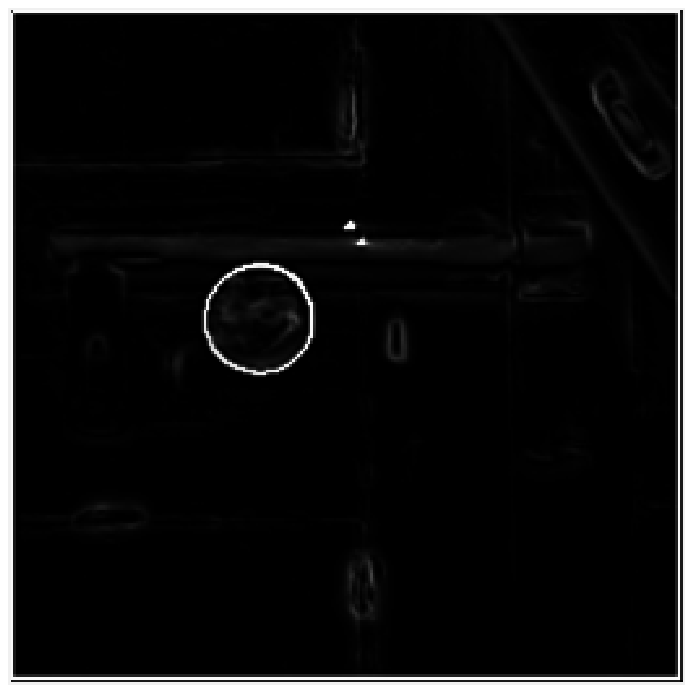}
\caption{Kodak image 2: resulting $v$ in the Ambrosio-Tortorelli model (left) and in the second order model (right), $\gamma=10^{-3},\e=3*10^{-2}$. }
\label{kodak2avfig}
\end{figure}

\begin{figure}
\includegraphics[width=0.4\textwidth,frame]{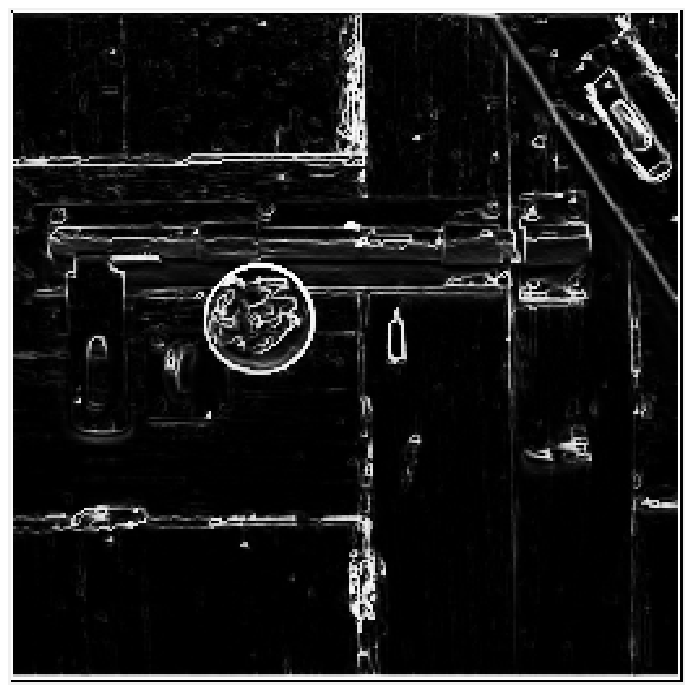}
\includegraphics[width=0.4\textwidth,frame]{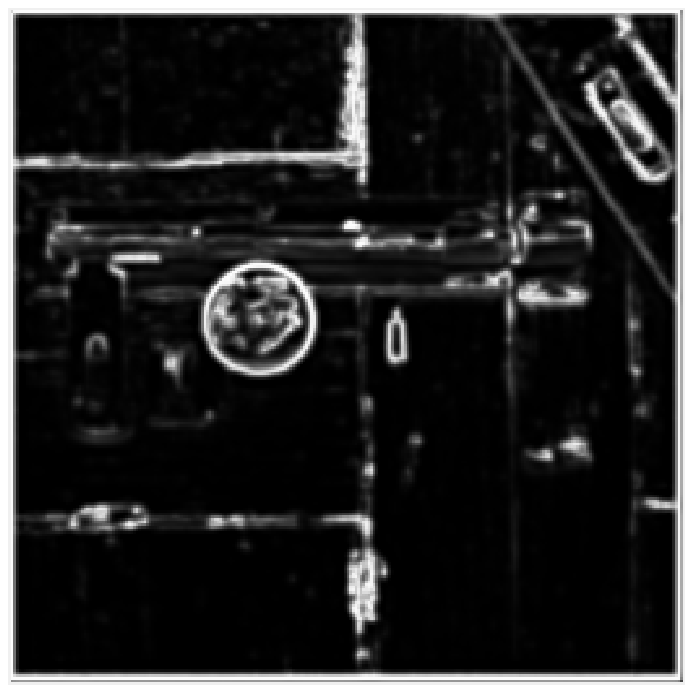}
\caption{Kodak image 2: resulting $v$ in the Ambrosio-Tortorelli model (left) and in the second order model (right), $\gamma=7*10^{-3},\e=6*10^{-2}$. }
\label{kodak2bvfig}
\end{figure}

\begin{figure}
\includegraphics[width=0.4\textwidth,frame]{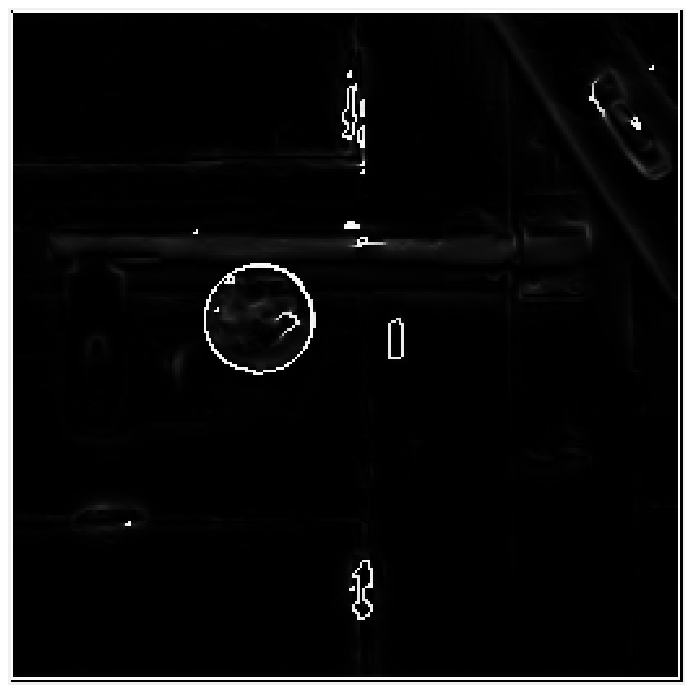}
\includegraphics[width=0.4\textwidth,frame]{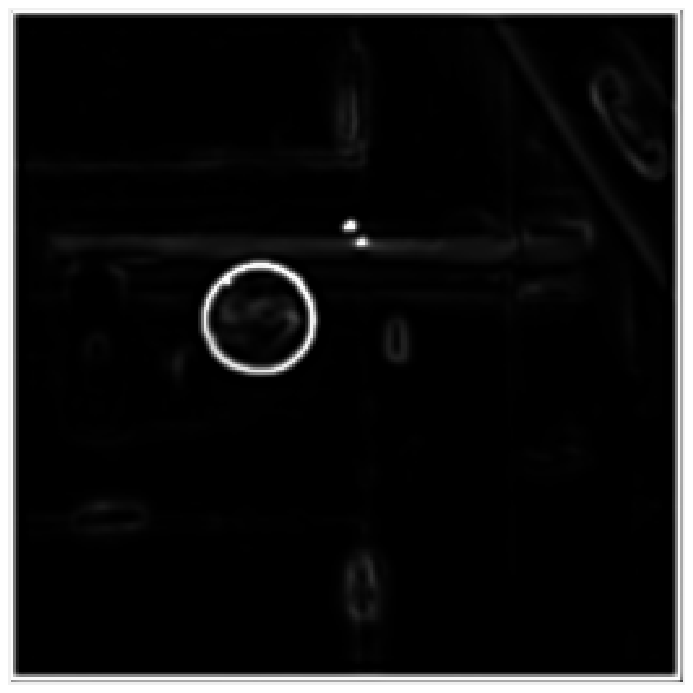}
\caption{Kodak image 2: resulting $u$ in the Ambrosio-Tortorelli model (left) and in the second order model (right), $\gamma=7*10^{-4},\e=6*10^{-2}$. }
\label{kodak2cvfig}
\end{figure}

\begin{figure}
\includegraphics[width=0.4\textwidth,frame]{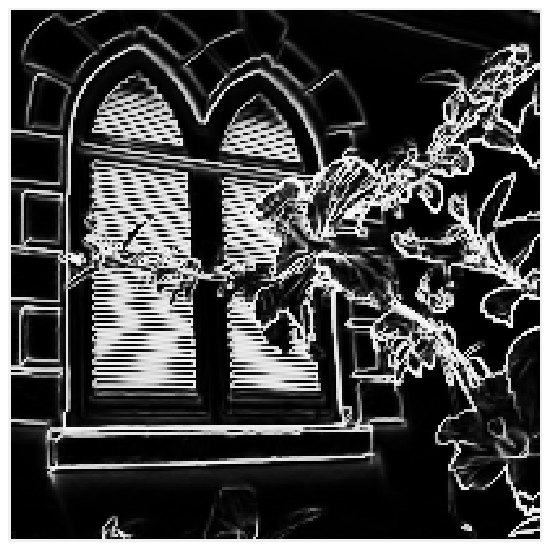}
\includegraphics[width=0.4\textwidth,frame]{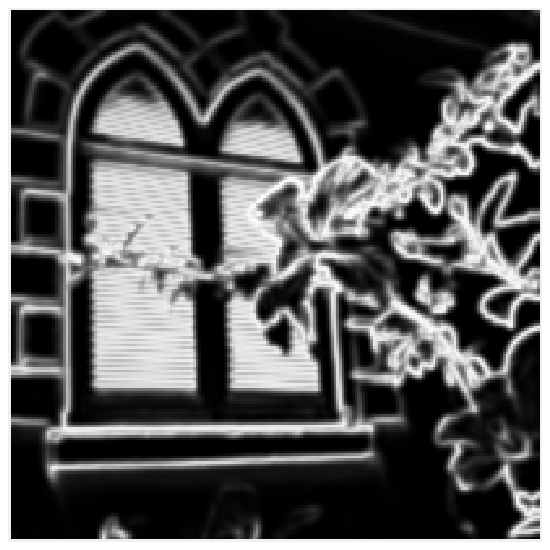}
\caption{Kodak image 7: Resulting $v$ in the Ambrosio-Tortorelli model (left) and in the second order model (right), both with $\alpha=10^{-2}$, $\gamma=7*10^{-3}$. }
\label{kodak7avfig}
\end{figure}

\begin{figure}
\includegraphics[width=0.4\textwidth,frame]{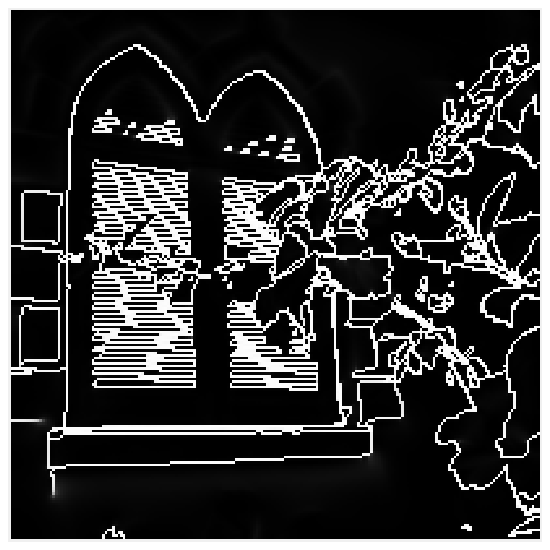}
\includegraphics[width=0.4\textwidth,frame]{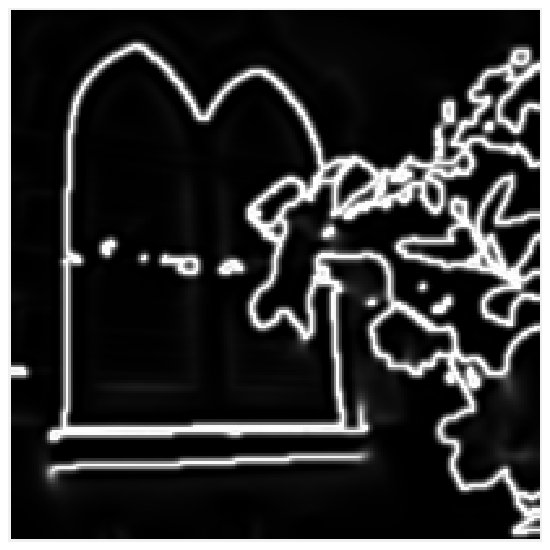}
\caption{Kodak image 7: Resulting $v$ in the Ambrosio-Tortorelli model (left) and in the second order model (right), both with $\alpha=7*10^{-2}$, $\gamma=10^{-3}$. }
\label{kodak7bvfig}
\end{figure}

\begin{figure}
\includegraphics[width=0.4\textwidth,frame]{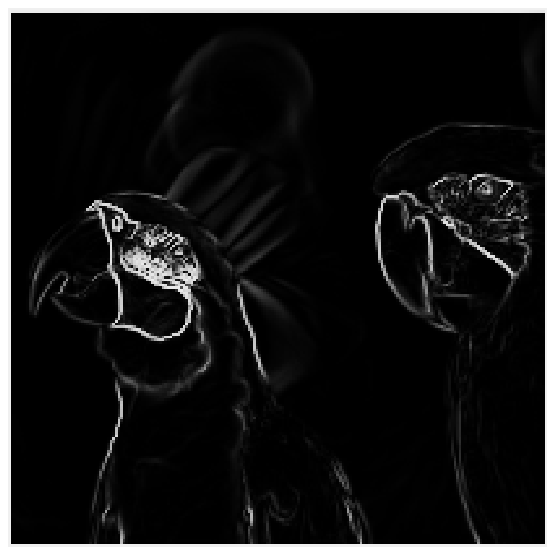}
\includegraphics[width=0.4\textwidth,frame]{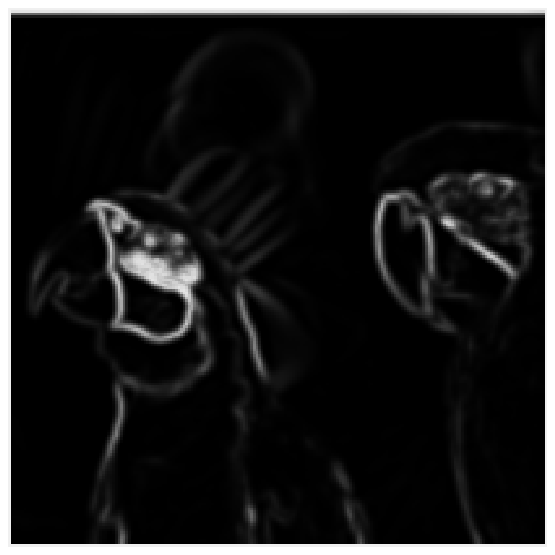}
\caption{Kodak image 23: resulting $v$ in the Ambrosio-Tortorelli model (left) and in the second order model (right). }
\label{kodak23vfig}
\end{figure}

\subsection{Reconstructed Images}

Figures \ref{ellipsefig_2}, \ref{twocirclefig_2}, \ref{sissefig}, and  \ref{mitosisfig_2} display the resulting $u$ in the different models with the parameter settings in the paper. Figures \ref{kodak2afig}, \ref{kodak2bfig}, \ref{kodak2cfig} display results for the Kodak image nr 2 with $\alpha=10^{-2}$ and different values of $\gamma$ and $\e$. Figures \ref{kodak7afig} and \ref{kodak7bfig} display the results for the Kodak image nr 7 with $\e=7*10^{-2}$ and different values of $\alpha$ and $\gamma$.
Figure \ref{kodak23bfig} displays the resulting $u$ in Kodak image nr 23 for both models with parameters $\alpha=10^{-2},\gamma=10^{-3}, \e=7*10^{-2}$.
\begin{figure}
\includegraphics[width=0.4\textwidth,frame]{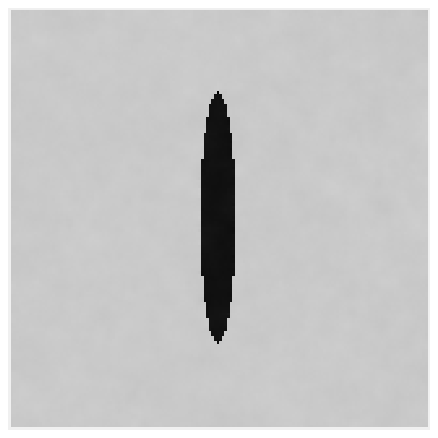}
\includegraphics[width=0.4\textwidth,frame]{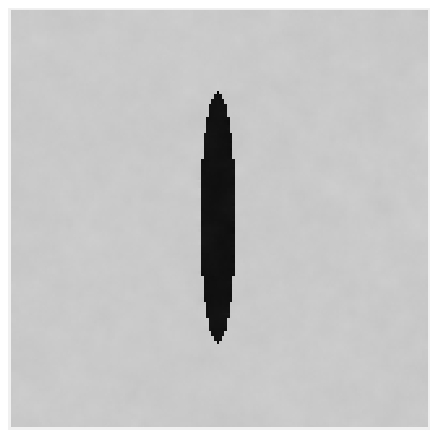}
\caption{Ellipse: resulting $u$ in the Ambrosio-Tortorelli model (left) and in the second order model (right). }
\label{ellipsefig_2}
\end{figure}

\begin{figure}
\includegraphics[width=0.4\textwidth,frame]{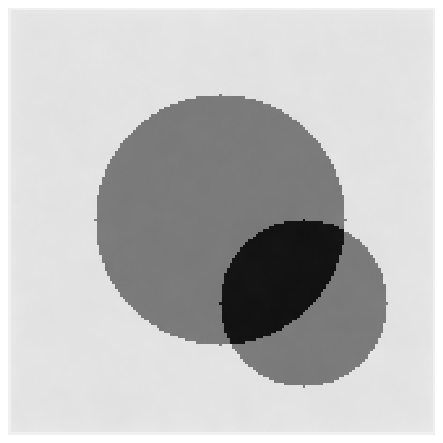}
\includegraphics[width=0.4\textwidth,frame]{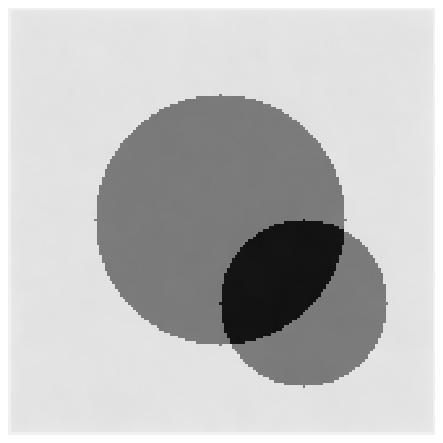}
\caption{Two circles: resulting $u$ in the Ambrosio-Tortorelli model (left) and in the second order model (right). }
\label{twocirclefig_2}
\end{figure}

\begin{figure}
\includegraphics[width=0.4\textwidth,frame]{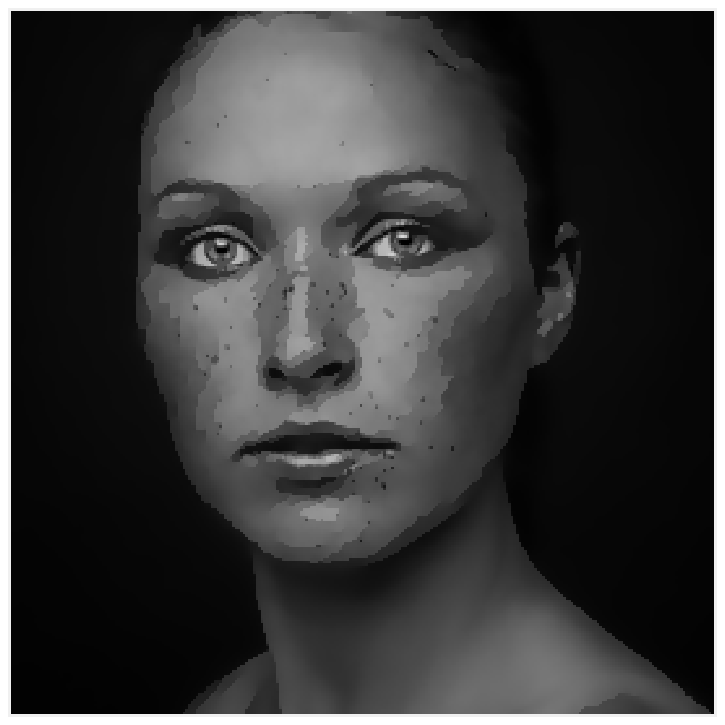}
\includegraphics[width=0.4\textwidth,frame]{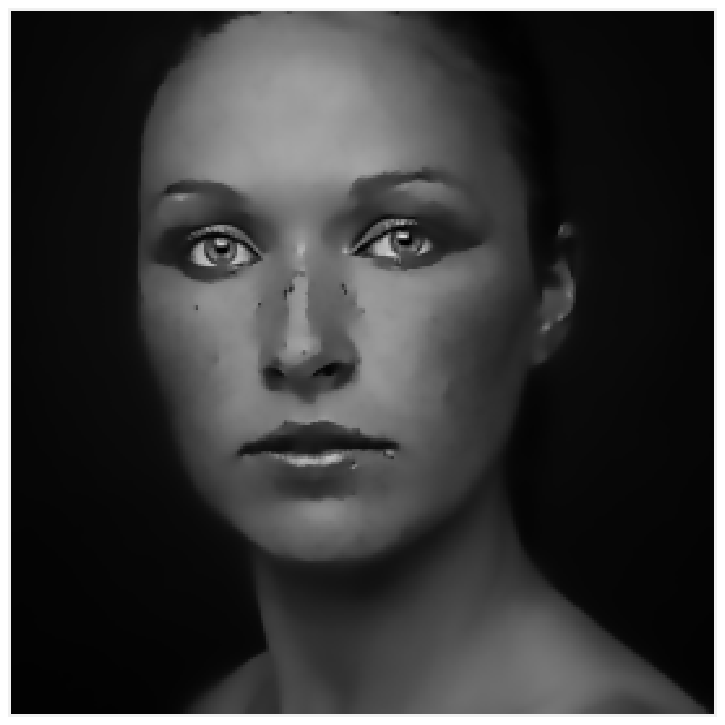}
\caption{Sisse image: resulting $u$ in the Ambrosio-Tortorelli model (left) and in the second order model (right). }
\label{sissefig}
\end{figure}

\begin{figure}
\includegraphics[width=0.4\textwidth,frame]{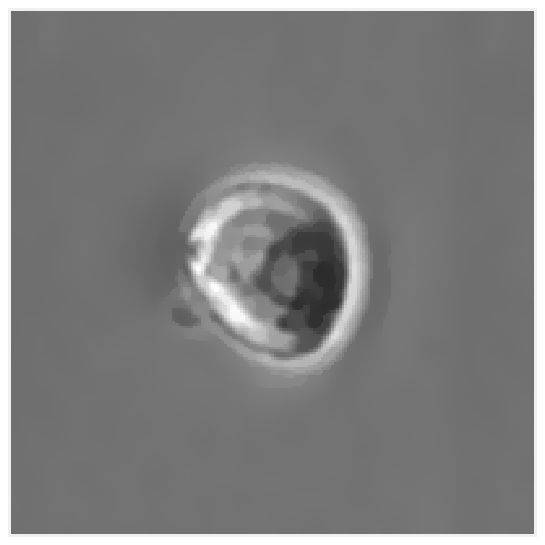}
\includegraphics[width=0.4\textwidth,frame]{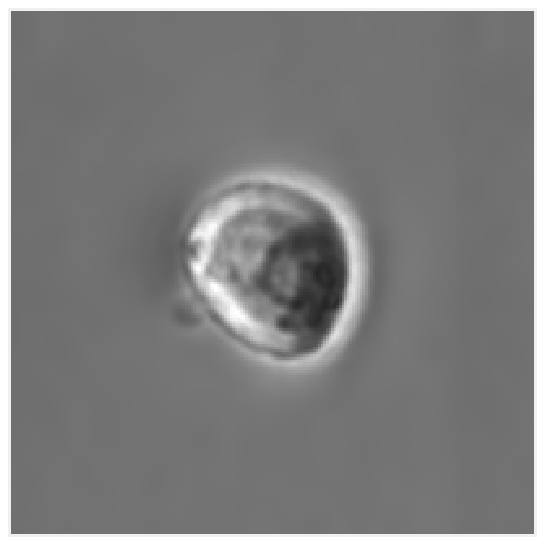}
\caption{Mitosis image: resulting $u$ in the Ambrosio-Tortorelli model (left) and in the second order model (right). }
\label{mitosisfig_2}
\end{figure}

\clearpage

\begin{figure}
\includegraphics[width=0.4\textwidth,frame]{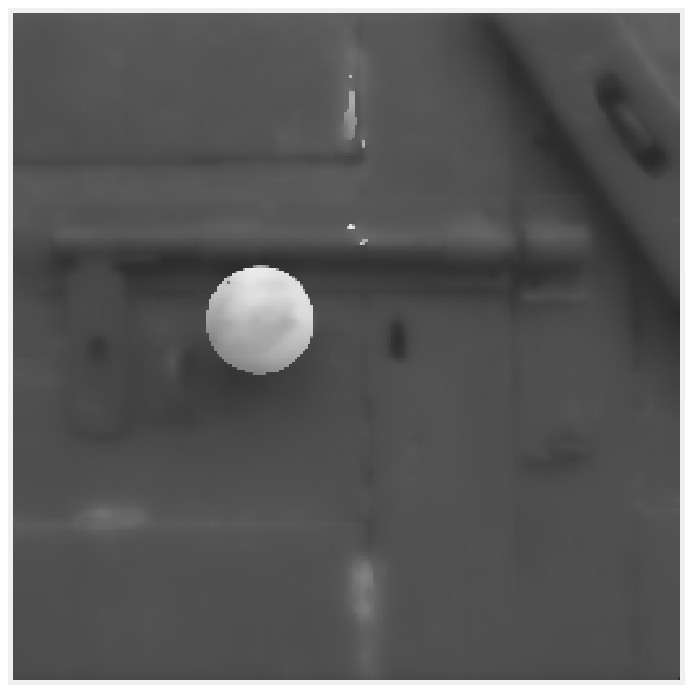}
\includegraphics[width=0.4\textwidth,frame]{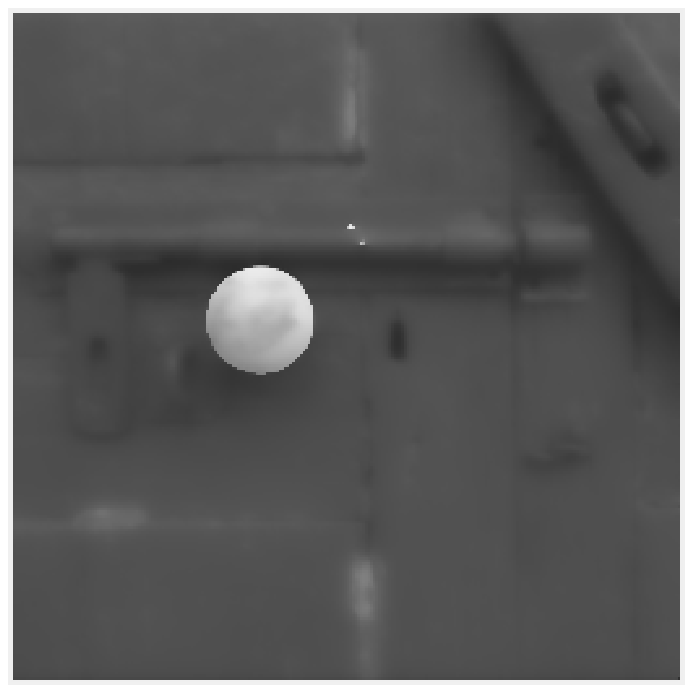}
\caption{Kodak image 2: resulting $u$ in the Ambrosio-Tortorelli model (left) and in the second order model (right), $\gamma=10^{-3},\e=3*10^{-2}$. }
\label{kodak2afig}
\end{figure}

\begin{figure}
\includegraphics[width=0.4\textwidth,frame]{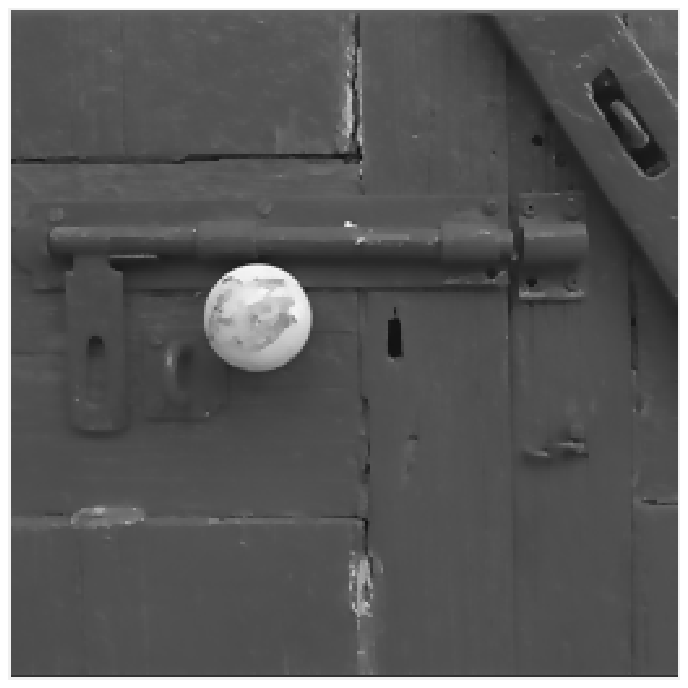}
\includegraphics[width=0.4\textwidth,frame]{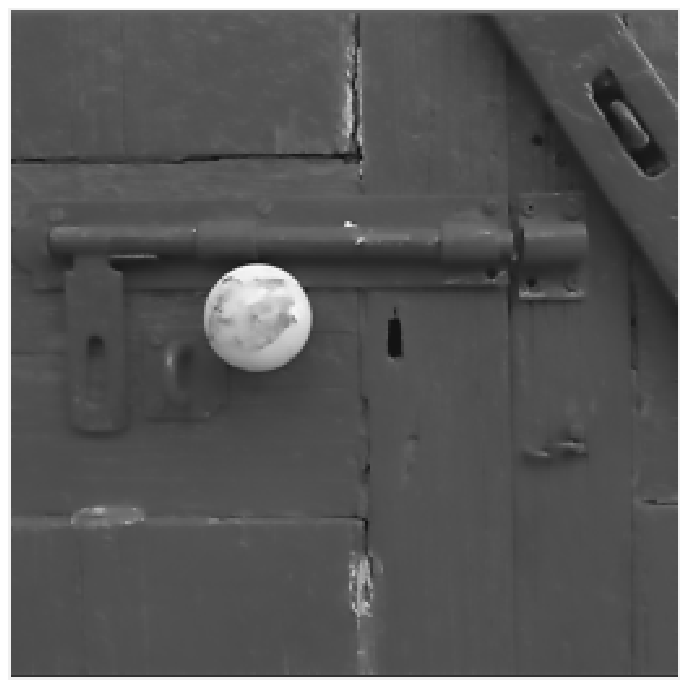}
\caption{Kodak image 2: resulting $u$ in the Ambrosio-Tortorelli model (left) and in the second order model (right), $\gamma=7*10^{-3},\e=6*10^{-2}$. }
\label{kodak2bfig}
\end{figure}

\begin{figure}
\includegraphics[width=0.4\textwidth,frame]{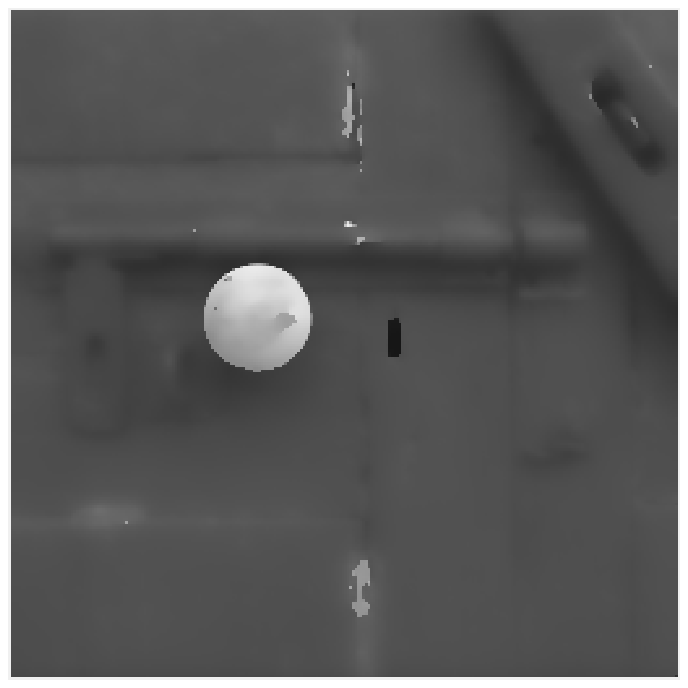}
\includegraphics[width=0.4\textwidth,frame]{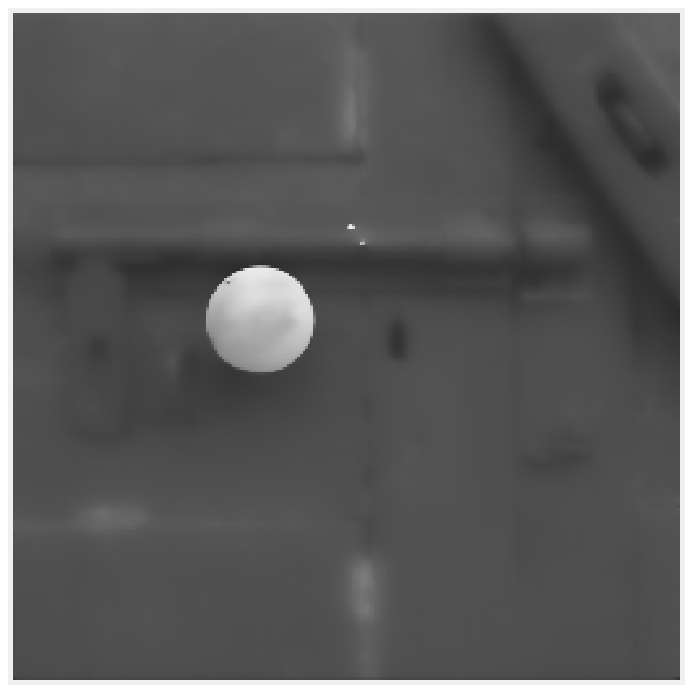}
\caption{Kodak image 2: resulting $u$ in the Ambrosio-Tortorelli model (left) and in the second order model (right), $\gamma=7*10^{-4},\e=6*10^{-2}$. }
\label{kodak2cfig}
\end{figure}

\begin{figure}
\includegraphics[width=0.4\textwidth,frame]{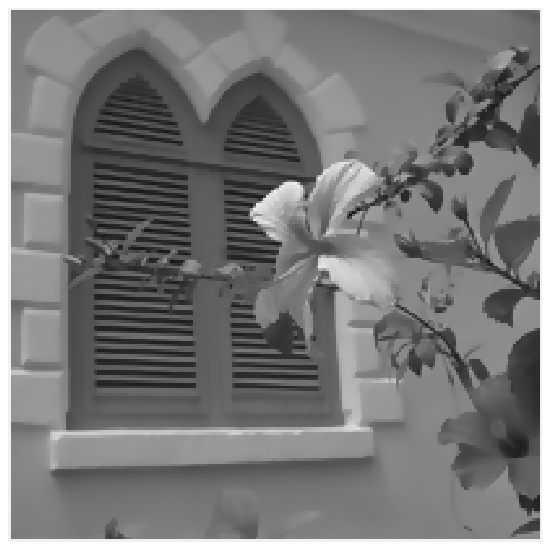}
\includegraphics[width=0.4\textwidth,frame]{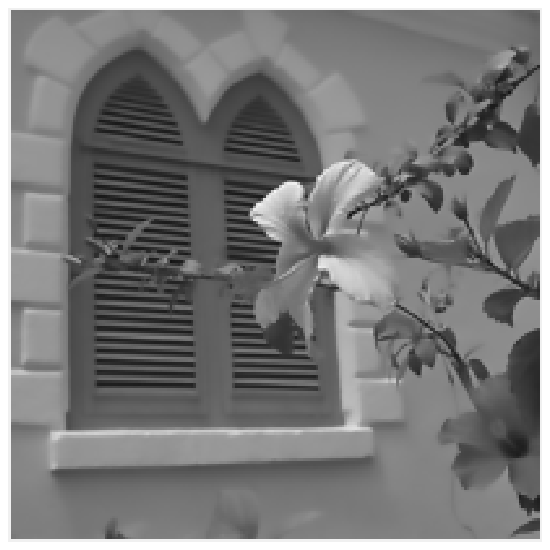}
\caption{Kodak image 7: Resulting $u$ in the Ambrosio-Tortorelli model (left) and in the second order model (right), both with $\alpha=10^{-2},\gamma=7*10^{-3}$. }
\label{kodak7afig}
\end{figure}

\begin{figure}
\includegraphics[width=0.4\textwidth,frame]{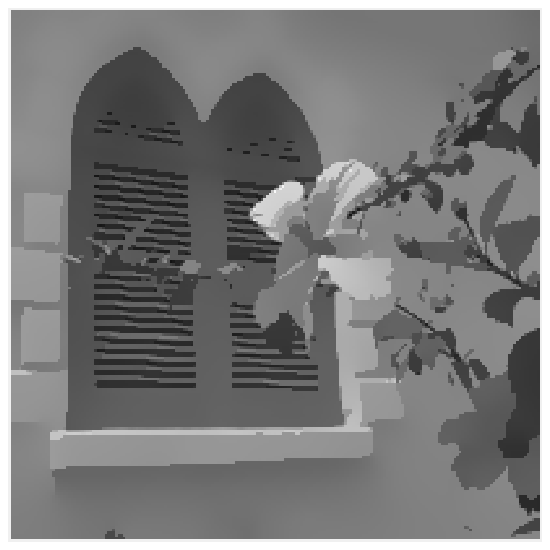}
\includegraphics[width=0.4\textwidth,frame]{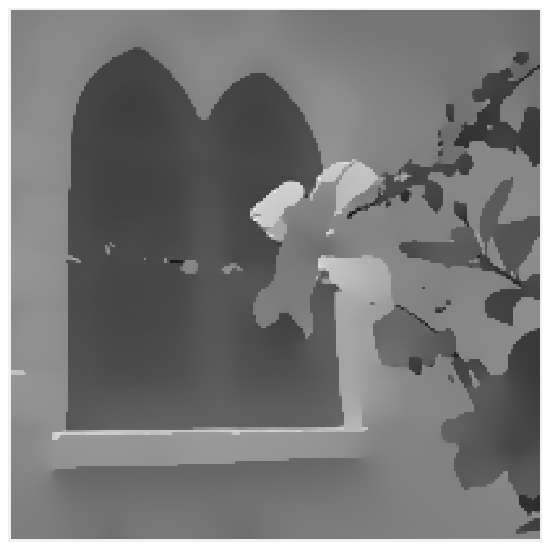}
\caption{Kodak image 7: Resulting $u$ in the Ambrosio-Tortorelli model (left) and in the second order model (right), both with $\alpha=7*10^{-2},\gamma=7*10^{-3}$. }
\label{kodak7bfig}
\end{figure}

\begin{figure}
\includegraphics[width=0.4\textwidth,frame]{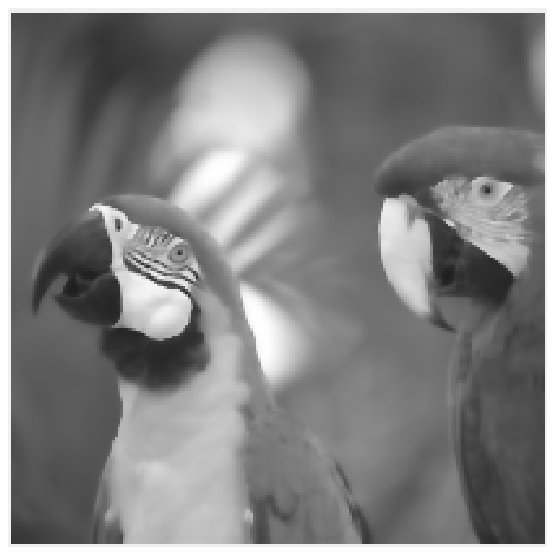}
\includegraphics[width=0.4\textwidth,frame]{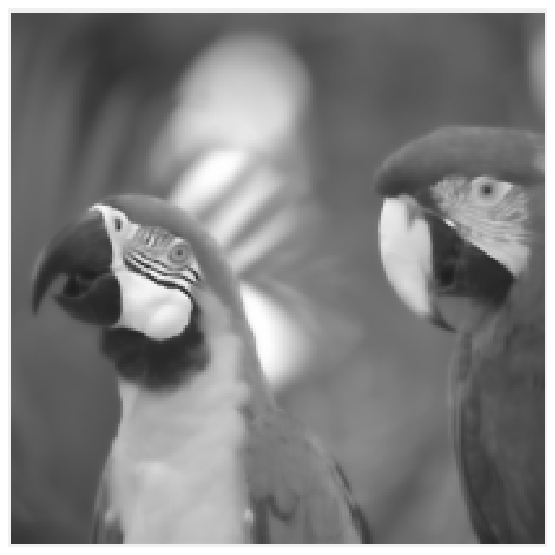}
\caption{Kodak image 23: resulting $u$ in the Ambrosio-Tortorelli model (left) and in the second order model (right). }
\label{kodak23bfig}
\end{figure}


\end{document}